 \newcommand{\todo}[1]{}
\DeclareMathAlphabet{\mathpzc}{OT1}{pzc}{m}{it}
\DeclareSymbolFont{bbold}{U}{bbold}{m}{n}
\DeclareSymbolFontAlphabet{\mathbbold}{bbold}
\theoremstyle{plain}
\newcommand{\refnewtheoremn}[4]{
\newaliascnt{#1}{#2}
\newtheorem{#1}[#1]{#3}
\aliascntresetthe{#1}
\expandafter\providecommand\csname #1autorefname\endcsname{#4}}
\newcommand{\refnewtheorem}[3]{\refnewtheoremn{#1}{#2}{#3}{#3}}
\def\makeCal#1{
\expandafter\newcommand\csname c#1\endcsname{\mathcal{#1}}}
\def\makeBB#1{
\expandafter\newcommand\csname b#1\endcsname{\mathbb{#1}}}
\def\makeFrak#1{
\expandafter\newcommand\csname f#1\endcsname{\mathfrak{#1}}}
\edef\y{\@Alph\count@}
\newtheorem{theorem}{Theorem}[section]
\theoremstyle{definition}
\theoremstyle{definition}
\newcommand {\id}{\operatorname{id}}
\newcommand{\coker}{\operatorname{coker}}
\newcommand {\Hom}{\operatorname{Hom}}
\newcommand{\isom}{\cong}
\newcommand{\ul}{Ulrich }
\numberwithin{equation}{subsection}
\NewDocumentCommand\alotofdots{ D(){\ldotp} O{3}}
{\mathinner{\prg_replicate:nn{#2}{#1}}}
\begin{document}
	
\title{Ulrich bundles on cyclic coverings of projective spaces}

\author{A J Parameswaran} 

\address{Kerala School of Mathematics, Kunnamangalam PO, Kozhikode-673571, Kerala,
	India}
	
\email{param@ksom.res.in, param@math.tifr.res.in}

\author{Jagadish Pine}

\address{Indian Institute of Science Education and Research, Pune,  Main Academic, Dr Homi Bhabha Rd, Pashan, Maharashtra 411008}

\email{math.jagadishpine@gmail.com, jagadish.pine@iiserpune.ac.in}

\date{}

\keywords{Cyclic covering; Projective space; Ulrich vector bundle}

\subjclass[2020]{ 14H30 · 14H60 · 14J60 · 14M10}

\begin{abstract}
We prove the existence of Ulrich bundles on cyclic coverings of $\mathbb{P}^n$ of arbitrary degree $d$. Given a relatively Ulrich bundle on a complete intersection subvariety, we construct a relatively Ulrich bundle on the ambient variety. As an application, we prove that there exists
a rank $d$ Ulrich bundle on a generic cyclic covering of $\mathbb{P}^{2}$ of degree $d$, provided that the degree $d \cdot k$ of the branch divisor is even. When $d \cdot k$ is odd, we also provide an estimation of the rank of the Ulrich bundle on a generic cyclic covering of $\mathbb{P}^2$. 
\end{abstract}

\maketitle

\section{Introduction}

\label{one}
We will work over the field of complex numbers $\bC$. Let $X \, \subseteq \, \bP^N$ be a projective variety and $\cO_X(1)$ be the restriction ${\cO_{\bP^N}(1)}_{|X}$. A vector bundle $E$ on $X$ is \ul with respect to the very ample line bundle $\cO_X(1)$ if $H^{i}(X,\,  E(-i)) =0$ for $i > 0$, $H^{j}(X,\,  E(-j-1)) =0$ for $j \,  < \text{dim}(X)$; see Proposition \ref{Eisenbuddefinition}.  
The definition of \ul bundles can be extended for an ample and globally generated polarization. The motivation of this extended definition is due to the result \cite[Proposition 5.4]{eisenbud2003resultants} 
: If there exists an \ul bundle on $X$ for a polarization $H$, then it will have an \ul bundle of much larger rank for a multiple $\alpha \cdot H$ where $\alpha \in \bZ_{>0}$. In this article, by an \ul bundle for a finite, surjective map $\pi  :\, X \longrightarrow \bP^n$ we will mean a bundle $E$ on $X$ such that $\pi_*E \, \cong \, \cO_{\bP^n}^{\oplus \text{rk}(E) \cdot \text{deg}(\pi)}$. This is equivalent to the above cohomological definition with respect to the polarization $\pi^*\cO_{\bP^n}(1)$.

  A variety admitting an \ul bundle has several favorable properties. If the projective variety $X$ supports an \ul bundle, then the Chow form of $X$ has a linear determinantal or Pfaffian representation \cite{eisenbud2003resultants}. Also, they have applications in Boij-S\"{o}derberg theory \cite[Theorem 4.2]{MR2810424}, namely, the cone of cohomology tables of vector bundles on a $d$ dimensional scheme $X$ is the same as the cone of cohomology tables of vector bundles of $\bP^d$ if and only if $X$ supports an \ul bundle. In \cite{eisenbud2003resultants}, the authors asked whether every projective variety $X$ admits an \ul bundle. If so, what is the minimum possible rank of an \ul bundle? So far, these questions have been explored for certain classes of varieties. In this article, we address these questions when the central projections $\pi:\, X \longrightarrow \bP^n$ are cyclic.\\

Let $\pi\, :\, X \, \longrightarrow \, \bP^n$ be a cyclic covering (for a discussion, refer to \S \ref{cycliccovering}) of degree $d$ branched over a smooth hypersurface of degree $d \cdot k$ for some natural number $k$. On the existential question of \ul bundle, we prove the following

\begin{theorem}[{Theorem \ref{veroneseargumenttheorem}}]
	\label{introthm1}
	There exist \ul vector bundles on every smooth cyclic covering of $\bP^n$. 
\end{theorem}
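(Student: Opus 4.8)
The plan is to prove the existence of an Ulrich bundle on a cyclic covering $\pi : X \to \bP^n$ of degree $d$ branched over a smooth hypersurface $B$ of degree $d\cdot k$. Given the paper's chosen definition of Ulrich bundle for a finite surjective map — namely, a bundle $E$ on $X$ with $\pi_* E \cong \cO_{\bP^n}^{\oplus \operatorname{rk}(E)\cdot d}$ — the most natural strategy is to construct a bundle whose pushforward splits as a trivial sum by controlling $\pi_* E$ directly, rather than chasing the cohomological vanishing conditions. Recall that a cyclic covering of degree $d$ branched over $B = \{s = 0\}$, where $s \in H^0(\bP^n, \cO(dk))$, is obtained by taking the $d$-th root of $s$ inside the total space of the line bundle $L = \cO_{\bP^n}(k)$; thus $\pi_* \cO_X = \bigoplus_{i=0}^{d-1} L^{-i} = \bigoplus_{i=0}^{d-1} \cO_{\bP^n}(-ik)$, and the algebra structure is governed by the relation $t^d = s$.

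First I would look for a line bundle (or low-rank bundle) $M$ on $X$ whose pushforward $\pi_* M$ is already a direct sum of copies of $\cO_{\bP^n}$, possibly after twisting by $\pi^*\cO_{\bP^n}(j)$ for a suitable $j$. Since $\pi_*$ of a twist of $\cO_X$ by $\pi^*\cO(j)$ is $\bigoplus_{i=0}^{d-1}\cO_{\bP^n}(j-ik)$ by the projection formula, no single twist of the structure sheaf will be trivial, so one must instead push forward a genuinely nontrivial line bundle on $X$ — one supported by the ramification/branch geometry. The natural candidates are line bundles of the form $\cO_X(a R)$, where $R$ is the (reduced) ramification divisor, since $dR = \pi^* B$ and powers of $R$ interpolate between the eigensheaves $L^{-i}$. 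Concretely, I would compute $\pi_*\cO_X(aR + j \pi^* H)$ for varying $a \in \{0,\dots,d-1\}$ and $j$, using the decomposition of $\pi_*$ into its $\mu_d$-eigenspaces, and solve for the combination of twists that makes every summand equal to $\cO_{\bP^n}$. This is essentially a bookkeeping problem modulo $d$ in the degrees $\{j - ik\}$.

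An alternative — and likely cleaner — route, suggested by the theorem's internal label \texttt{veroneseargumenttheorem} and the paper's abstract, is a Veronese/multiple-polarization argument invoking \cite[Proposition 5.4]{eisenbud2003resultants}: if $X$ supports an Ulrich bundle for \emph{some} ample globally generated polarization, then it supports one (of larger rank) for every sufficiently large multiple of that polarization, and more flexibly one can transfer Ulrich bundles between a variety and its image under a Veronese-type reembedding. The strategy here would be to realize $X$, via a composition of $\pi$ with a Veronese embedding of $\bP^n$, as living over a projective space in such a way that the pushforward of a tautological or twisted bundle along the composed finite map becomes trivial. Since a Veronese map $\bP^n \to \bP^M$ is itself finite onto its image and pushes $\cO_{\bP^n}(1)$ forward to a sum of line bundles, composing it with $\pi$ lets one absorb the twists $\cO_{\bP^n}(-ik)$ into the Veronese decomposition and arrange cancellation. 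I would set up the composite $X \xrightarrow{\pi} \bP^n \xrightarrow{v_m} \bP^M$ and compute $(v_m \circ \pi)_*$ of a suitable bundle, choosing the Veronese degree $m$ large enough (a multiple of $k$, say) that each eigensheaf summand $\cO_{\bP^n}(-ik)$ is pushed to a trivial sum.

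The main obstacle I anticipate is the eigensheaf bookkeeping: after pushing forward, one obtains a direct sum of line bundles $\cO_{\bP^n}(c_i)$ with the $c_i$ determined by the residues of the twists modulo the covering degree, and the crux is to choose the input bundle and the Veronese (or polarization) degree so that \emph{every} $c_i$ lands on $0$ simultaneously, producing $\cO_{\bP^n}^{\oplus N}$ with no surviving nontrivial twists. Showing that such a simultaneous cancellation is always achievable — and that the resulting bundle on $X$ is genuinely a vector bundle (locally free), not merely a reflexive or torsion-free sheaf — is where the real work lies; smoothness of the branch divisor and of $X$ should guarantee local freeness, but verifying the global triviality of the pushforward requires the careful degree analysis described above. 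A secondary subtlety is ensuring the construction respects the $\mu_d$-action so that the eigenspace decomposition of $\pi_*$ is valid and the line bundles on $X$ one writes down actually exist (i.e. that the relevant divisor classes are realized), which I would check against the Picard group of the cyclic cover.
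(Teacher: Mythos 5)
Your proposal never actually produces a candidate bundle, and both of the routes you sketch run into concrete obstructions. For the first route: on the cyclic cover the reduced ramification divisor $R$ satisfies $\cO_X(R)\cong\pi^*\cO_{\bP^n}(k)$ (the tautological section $T$ of $\pi^*\cO_{\bP^n}(k)$ vanishes exactly on $R$), so every line bundle of the form $\cO_X(aR+j\,\pi^*H)$ is a pullback $\pi^*\cO_{\bP^n}(ak+j)$, and its pushforward is $\bigoplus_{i=0}^{d-1}\cO_{\bP^n}(ak+j-ik)$, which is never trivial for $d\ge 2$. There is no further ``bookkeeping modulo $d$'' to do: for a general branch divisor the Picard group of $X$ is generated by $\pi^*\cO_{\bP^n}(1)$, so no Ulrich \emph{line} bundle exists at all and one is forced into higher rank. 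For the second route, the Veronese map $v_m:\bP^n\to\bP^M$ is a closed embedding, not a finite cover, so $(v_m)_*$ does not decompose anything into line-bundle summands and composing with it cannot ``absorb'' the twists $\cO_{\bP^n}(-ik)$; the Eisenbud--Schreyer rescaling of the polarization presupposes that you already have an Ulrich bundle for some polarization, which is exactly what is to be proved.

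The idea you are missing is the actual source of the bundle: matrix factorizations. The paper uses the Veronese not to re-polarize but to \emph{lower the degree of the branch divisor}: writing the degree-$dk$ branch polynomial $g$ as a degree-$d$ polynomial $g'$ in the degree-$k$ Veronese coordinates, one gets a degree-$d$ hypersurface $\tilde Z=\{t^d=g'\}\subseteq\bP^{N+1}$ whose linear projection $\tilde\pi:\tilde Z\to\bP^N$ restricts to $\pi$ over $\bP^n$. A linear matrix factorization $\alpha_1\cdots\alpha_d=(t^d-g')\cdot\mathrm{id}_m$ (which exists by the Backelin--Herzog lemma, Proposition \ref{existencematrix}) yields cokernel sheaves $G_i=\coker(\alpha_i)$ supported on $\tilde Z$ with resolution $0\to\cO_{\bP^{N+1}}(-1)^{\oplus m}\to\cO_{\bP^{N+1}}^{\oplus m}\to G_i\to 0$; the cohomology of this sequence (or the Fourier--Mukai argument through $\mathrm{Bl}_Q\bP^{N+1}$) shows $\tilde\pi_*G_i\cong\cO_{\bP^N}^{\oplus m}$, and flat base change then gives $\pi_*(G_i|_Z)\cong\cO_{\bP^n}^{\oplus m}$. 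Local freeness of $G_i|_Z$ holds because $Z$ avoids $\tilde\pi^{-1}(\mathrm{sing}(B'))$, where $G_i$ has finite projective dimension and hence projective dimension zero by Auslander--Buchsbaum. Without the matrix factorization (or an equivalent explicit two-term linear resolution) your plan has no bundle to push forward, so the proof cannot be completed along the lines you describe.
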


The case of degree $\, d \, =\, 2\, $ was treated in \cite[Theorem 1.2]{kumar2023ulrichbundlesdoublecovers}. The outline of the proof of Theorem \ref{introthm1} is as follows. Let $|\cO_{\bP^n}(k)|\, :\, \bP^n \,\longrightarrow\, \bP^N$ be the Veronese embedding. We construct a degree $d$ polynomial $g$ from the branch divisor using the Veronese coordinates of $\bP^N$.   We consider a certain degree $d$ cyclic covering $\tilde{\pi}\,:\, \tilde{Z} \,\longrightarrow\, \bP^N$, where $\tilde{Z}\,=\, Zero(t^d-g)\, \subseteq \,\bP^{N+1}$ is a hypersurface, and $\tilde{\pi}$ is a certain linear projection from a point. The covering $\pi$ is the restriction of $\tilde{\pi}$ to $\bP^n$.  Let $\alpha\,=\,(\alpha_1,\, \alpha_2,\, \cdots,\, \alpha_d)$ be a matrix factorization of $(t^d \,-\, g)$ of size $m$ as in Proposition \ref{existencematrix}. We define the following short exact sequence
\begin{center}
	\begin{tikzcd}[sep=scriptsize]
		0 && {\mathcal{O}_{\mathbb{P}^{N+1}}(-1)}^{\oplus m} && {\mathcal{O}_{\mathbb{P}^{N+1}}}^{\oplus m} && G_i && 0
		\arrow[from=1-1, to=1-3]
		\arrow["{\alpha_i}", from=1-3, to=1-5]
		\arrow[from=1-5, to=1-7]
		\arrow[from=1-7, to=1-9]
	\end{tikzcd}
\end{center}
where $G_i$ is the sheaf $\coker(\alpha_i)$. The proof of Theorem \ref{introthm1} ultimately hinges on proving the following proposition, which states that the sheaves $G_i$ on $\tilde{Z}$ are Ulrich.
  
\begin{proposition}[Proposition \ref{keythm}]
	\label{introthm2}
	The sheaves $\tilde{\pi}_*G_i$ are trivial vector bundles of rank $m$. 
\end{proposition}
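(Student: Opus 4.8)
The plan is to deduce the statement from the defining short exact sequence together with the elementary cohomology of line bundles on $\bP^{N+1}$, after recording two facts about the projection $\tilde\pi$. The map $\tilde\pi$ is the linear projection from $p=[1:0:\cdots:0]$, which does not lie on $\tilde Z$ (there $t^d-g=1\neq 0$); it is finite of degree $d$ and is cut out by the subsystem $\langle x_0,\dots,x_N\rangle\subseteq H^0(\cO_{\bP^{N+1}}(1))$, so that $\tilde\pi^{*}\cO_{\bP^N}(1)=\cO_{\bP^{N+1}}(1)|_{\tilde Z}=:\cO_{\tilde Z}(1)$. By the projection formula and finiteness, for every coherent $\cF$ on $\tilde Z$ and every $j$ one has $(\tilde\pi_*\cF)(j)\cong\tilde\pi_*(\cF(j))$ and $H^i(\bP^N,(\tilde\pi_*\cF)(j))\cong H^i(\tilde Z,\cF(j))$. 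Thus I can read off all cohomology of $\tilde\pi_*G_i$ by computing on $\bP^{N+1}$, and the goal becomes: show $\tilde\pi_*G_i$ is an Ulrich sheaf on $\bP^N$ with the numerics of $\cO_{\bP^N}^{\oplus m}$, and that it is locally free.

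First I would establish the cohomological Ulrich vanishing for $G_i$. Since $(\alpha_1,\dots,\alpha_d)$ is a matrix factorization of $t^d-g$ as in Proposition \eqref{existencematrix}, the product of the remaining factors exhibits $(t^d-g)I_m$ as a multiple of $\alpha_i$, so $t^d-g$ annihilates $G_i=\coker(\alpha_i)$; hence $G_i$ is an $\cO_{\tilde Z}$-module and $H^i(\bP^{N+1},G_i(j))\cong H^i(\tilde Z,G_i(j))$. Twisting the defining sequence by $\cO_{\bP^{N+1}}(-p)$ gives
\[
0\to \cO_{\bP^{N+1}}(-p-1)^{\oplus m}\xrightarrow{\alpha_i}\cO_{\bP^{N+1}}(-p)^{\oplus m}\to G_i(-p)\to 0 .
\]
For $1\le p\le N$ the integers $-p$ and $-p-1$ both lie in $[-(N+1),-1]$, a range in which $\cO_{\bP^{N+1}}(\,\cdot\,)$ has vanishing cohomology in every degree (it is strictly between the $H^0$-range $\ge 0$ and the $H^{N+1}$-range $\le -(N+2)$). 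The long exact sequence then forces $H^i(\tilde Z,G_i(-p))=0$ for all $i$ and all $1\le p\le N$; as $\dim\tilde Z=N$, this is exactly the vanishing of Proposition \eqref{Eisenbuddefinition}. The same sequence at $p=0$ gives $H^{>0}(\tilde Z,G_i)=0$ and $h^0(\tilde Z,G_i)=m$, and in general $\chi(G_i(j))=m\big(\binom{N+1+j}{N+1}-\binom{N+j}{N+1}\big)=m\binom{N+j}{N}$.

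Now set $F:=\tilde\pi_*G_i$. By the first paragraph $F$ inherits $H^i(\bP^N,F(-p))=0$ for all $i$ and $1\le p\le N$, together with $H^{>0}(\bP^N,F)=0$, $h^0(F)=m$, and Hilbert polynomial $m\binom{N+j}{N}$; in particular $F$ is $0$-regular, hence globally generated. It remains to prove $F$ is locally free of rank $m$, and this is the step that requires care: the hypersurface $\tilde Z=\{t^d=g\}$ may be singular (its singular locus lies over $\mathrm{Sing}\{g=0\}\subseteq\bP^N$), so $G_i$ need not be a vector bundle on $\tilde Z$, and one must not assume it is. The fix is structural: as the cokernel in a matrix factorization, $G_i$ is a maximal Cohen--Macaulay $\cO_{\tilde Z}$-module; since $\tilde\pi$ is finite, $F=\tilde\pi_*G_i$ is maximal Cohen--Macaulay over the regular scheme $\bP^N$, and a maximal Cohen--Macaulay module over a regular local ring is free by Auslander--Buchsbaum. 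Thus $F$ is locally free, and its rank equals the leading coefficient normalization of its Hilbert polynomial, namely $m$.

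Finally, global generation with $h^0(F)=m$ yields a surjection $\cO_{\bP^N}^{\oplus m}=H^0(F)\otimes\cO_{\bP^N}\twoheadrightarrow F$ between locally free sheaves of the same rank $m$; its kernel is locally free of rank $0$, hence zero, so the map is an isomorphism and $\tilde\pi_*G_i\cong\cO_{\bP^N}^{\oplus m}$. I expect the cohomology vanishing to be essentially formal once the twist identification $\tilde\pi^{*}\cO_{\bP^N}(1)=\cO_{\tilde Z}(1)$ is in place; the only genuinely subtle point is the local freeness of the pushforward on the possibly singular $\tilde Z$, handled by the maximal Cohen--Macaulay property of matrix-factorization cokernels and its preservation under finite pushforward to the regular base $\bP^N$.
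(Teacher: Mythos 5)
Your proof is correct, and its first half --- deducing the Ulrich vanishing for $G_i$ from the defining short exact sequence together with the vanishing of all cohomology of $\cO_{\bP^{N+1}}(j)$ for $-(N+1)\le j\le -1$ --- is exactly the paper's first argument. Where you diverge is in how the triviality of $\tilde{\pi}_*G_i$ is then extracted: the paper simply invokes the equivalence (D2)$\Leftrightarrow$(D3) of Proposition \eqref{Eisenbuddefinition} together with $h^0(G_i)=m$, whereas you re-prove that implication by hand (maximal Cohen--Macaulayness of a matrix-factorization cokernel, preservation of depth under finite pushforward, Auslander--Buchsbaum over the regular base $\bP^N$, then $0$-regularity plus the rank count to upgrade the evaluation surjection to an isomorphism). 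This makes your write-up self-contained and, usefully, it confronts head-on the issue the paper only remarks on --- that $G_i$ need not be locally free on the possibly singular $\tilde{Z}$ --- by an argument that is essentially the paper's own Proposition \eqref{intermediateprop}. The paper additionally offers a second, independent proof via the Fourier--Mukai transform $p_*\circ\rho^*$ on the blow-up of the projection center, which you do not attempt and do not need. One wording to tighten: to see that $t^d-g$ annihilates $\coker(\alpha_i)$ you need $(t^d-g)\cdot\id=\alpha_i\cdot\gamma$ with $\alpha_i$ on the \emph{left}, which follows from the cyclic-permutation property of matrix factorizations ($AB=f\cdot\id$ implies $BA=f\cdot\id$ over a domain), not merely from $\alpha_i$ occurring somewhere in a product equal to $(t^d-g)\cdot\id$.
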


By computing the size of a matrix factorization of $(t^d-g)$, we can estimate the rank of the \ul bundle on $X$. However, this method is not effective in reducing the rank. Our next theorem gives an alternative approach to constructing \ul bundles on cyclic coverings of $\bP^n$, which can also be used in minimizing the rank. Apart from these immediate applications, the result can also be of general interest.  
We define the notion of a \emph{relatively \ul bundle} in Definition \ref{relulbun} as a generalization of \ul bundle for an arbitrary finite map between smooth, projective varieties. This notion becomes relevant in the context of the following theorem.

 \begin{theorem}[Theorem \ref{generalextensionthm}]
	\label{introthm3}
	Let $\pi\,:\, X \,\longrightarrow \, Y$ be a cyclic covering of degree $d$ of smooth projective varieties. Then we will have $\pi_* \cO_X \,\cong \,\cO_Y \,\oplus\, L^{-1} \,\oplus\, \cdots \,\oplus \, L^{-d+1}$ for some line bundle $L \,\in \,\text{Pic}(Y)$ such that the branch divisor $B \,\in\, |L^{\otimes d}|$. Let $W \,\in \,|L|$ be a smooth divisor such that $B \, \cap \, W$ is smooth (or equivalently, the intersection is transversal). We consider the following Cartesian diagram
	\begin{center}
		\begin{tikzcd}
			{{Z}} && {{X}} \\
			{W} && {Y}
			\arrow[hook, from=2-1, to=2-3]
			\arrow["\pi'", from=1-1, to=2-1]
			\arrow["\pi"', from=1-3, to=2-3]
			\arrow[hook, from=1-1, to=1-3]
		\end{tikzcd}
	\end{center}
	Then $\pi'\,:\, Z \,\longrightarrow\, W$ is a cyclic covering of degree $d$ of smooth projective varieties. Further we will assume $H^{1}(Y,\, L^{\otimes j})\,=\,0$ for all $j \,\in \,\bZ$. 
	If $F$ is a rank $r$ \emph{relatively \ul bundle} on $Z$ with respect to $\pi'$, then we can construct a \emph{relatively \ul bundle} $E$ of rank $d \cdot r$ on $X$ with respect to $\pi$.
\end{theorem}

The case of double covering in Theorem \ref{introthm3} was proved in \cite[Theorem 4.1]{kumar2023ulrichbundlesdoublecovers}. The proof of Theorem \ref{introthm3} is done in several steps in this paper. In the first step, we modify the trivial bundle $\cO_X^{\oplus d \cdot r}$ along $F$, where modification refers to taking the kernel of a natural surjective map $\cO_X^{\oplus d \cdot r} \, \longrightarrow\, F$. In the second step, we further modify this kernel along $F^{\oplus 2}$. The main challenge
in this step is the lack of an immediate surjective map from the kernel to
$F^{\oplus 2}$, which requires us to construct a suitable surjection such that the pushforward under $\pi _{*}$ of its kernel
has a certain property. The final \emph{relatively \ul bundle} $E$ is obtained by iterating similar modifications.   
 We will discuss a few applications of Theorem \ref{introthm3}. \\
 
 The theorem reduces the problem of showing the existence of an \ul bundle for a cyclic covering $\pi\,:\, X \,\longrightarrow\,\bP^n$ to finding a relatively \ul bundle on a cyclic covering of certain complete intersection curves in $\bP^n$ (see Theorem \ref{completeintthm}). In \cite[Theorem 1.2]{sebastian2022rank}, the authors have shown that a double plane branched along a smooth generic curve $B \,\subseteq\, \bP^2$ of degree $2s$ such that $s \,\ge\, 3$ admits a rank $2$ special \ul bundle. In \cite[Theorem 1.1]{kumar2023ulrichbundlesdoublecovers}, the authors have proved that every smooth double plane admits a rank $2$ \ul bundle. Applying Theorem \ref{introthm3} in this setting, we prove the following generalization:

 \begin{theorem}[Theorem \ref{evenparitythm}]
 	\label{introthm4}
 	Let $\,\pi\,:\, X \,\longrightarrow\,\bP^2$ be a generic cyclic covering of degree $d$ such that the degree of the branch divisor $d \,\cdot\, k$ is even. Then there exists an \ul bundle $E$ on $X$ of rank $d$. 
 \end{theorem}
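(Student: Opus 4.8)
The plan is to realize $\pi\colon X\to\bP^2$ as the top arrow of the Cartesian square in Theorem \ref{generalextensionthm}, reducing the entire problem to a one-dimensional construction. First I would record the numerical data. Since $\pi_*\cO_X\cong\cO_{\bP^2}\oplus\cO_{\bP^2}(-k)\oplus\cdots\oplus\cO_{\bP^2}(-(d-1)k)$ and the branch divisor satisfies $B\in|\cO_{\bP^2}(dk)|$, the relevant line bundle of Theorem \ref{generalextensionthm} is $L=\cO_{\bP^2}(k)$, and indeed $B\in|L^{\otimes d}|$. The vanishing hypothesis of that theorem is automatic here, because $H^1(\bP^2,L^{\otimes j})=H^1(\bP^2,\cO_{\bP^2}(jk))=0$ for every $j\in\bZ$. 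Because the covering is generic, $B$ is a generic curve of degree $dk$, so by Bertini a generic $W\in|\cO_{\bP^2}(k)|=|L|$ is smooth and meets $B$ transversally; I would fix such a $W$.

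With this $W$, forming $Z=\pi^{-1}(W)$ gives a degree-$d$ cyclic covering $\pi'\colon Z\to W$ of the smooth plane curve $W$, branched over the $dk^2$ points of $B\cap W$. Theorem \ref{generalextensionthm} now converts a rank-$r$ relatively \ul bundle on $Z$ (with respect to $\pi'$) into a rank-$dr$ relatively \ul bundle on $X$ (with respect to $\pi$). Since the base of $\pi$ is $\bP^2$, a relatively \ul bundle with respect to $\pi$ is precisely an \ul bundle in the sense of Proposition \ref{Eisenbuddefinition}. Hence it suffices to produce a relatively \ul bundle of rank $r=1$, that is, a single line bundle $F$ on $Z$ that is relatively \ul with respect to $\pi'$; the extension theorem then yields an \ul bundle $E$ on $X$ of rank exactly $d\cdot 1=d$, as required.

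It remains to construct the relatively \ul line bundle $F$ on the curve $Z$. Unwinding Definition \ref{relulbun} in dimension one, this amounts to a line bundle with $H^0(Z,F\otimes h^{-1})=H^1(Z,F\otimes h^{-1})=0$, where $h=(\pi^*\cO_{\bP^2}(1))|_Z=(\pi')^{*}\cO_W(1)$ is the pulled-back polarization (degree $dk$ on $Z$); equivalently $(\pi')_*F$ should be an \ul bundle on $W\subseteq\bP^2$. A Riemann–Roch count fixes $\deg F=\binom{dk}{2}$, so that $F\otimes h^{-1}$ has degree $g_Z-1$ with $g_Z=\binom{dk-1}{2}$. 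Thus the natural candidate is $F=\theta_Z\otimes h$ for a theta characteristic $\theta_Z$ of $Z$, and the \ul condition becomes $H^0(Z,\theta_Z)=H^1(Z,\theta_Z)=0$. On a general curve such an ineffective theta characteristic exists, but $Z$ is special: it carries the $\bZ/d$-action of the covering, and the extension construction forces $\theta_Z$ to be compatible with that action. The hypothesis that $dk$ is even is what I expect to guarantee the existence of an action-compatible, ineffective theta characteristic—the parity controlling the eigen-degree bookkeeping along the covering—while genericity of $B$, hence of $Z$, forces the vanishings $h^0(\theta_Z)=h^1(\theta_Z)=0$ by placing $\theta_Z$ off the theta divisor.

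The main obstacle is exactly this curve-level step. Note that the degree $\binom{dk}{2}$ and the genus $\binom{dk-1}{2}$ are integers for all $d$ and $k$, so the issue is not integrality of the numerics; rather, it is that a line bundle simultaneously compatible with the cyclic structure and satisfying the two cohomological vanishings exists only when $dk$ is even, which is precisely why the odd case can yield only a rank estimate in place of rank $d$. Verifying these vanishings for the \emph{explicitly constructed} $F$—not merely for an abstract general line bundle—on the special curve $Z$ is where the genericity of the covering must be used with care, and this is the technical heart of the argument.
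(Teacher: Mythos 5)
Your reduction to the curve case via \Cref{generalextensionthm} is the same first move the paper makes, and your observations that $L=\cO_{\bP^2}(k)$ satisfies the $H^1$-vanishing hypothesis and that a rank~$1$ relatively Ulrich bundle on $Z$ yields a rank~$d$ Ulrich bundle on $X$ are correct. The gap is in the curve-level step, and it is twofold. First, your ``unwinding'' of \Cref{relulbun} is not an equivalence: being relatively Ulrich for $\pi'\colon Z\to W$ means $\pi'_*F\cong\cO_W^{\oplus d}$, i.e.\ $\pi'_*F$ is \emph{trivial} on $W$, whereas the condition $H^0(F\otimes h^{-1})=H^1(F\otimes h^{-1})=0$ says only that $\pi'_*F$ is an Ulrich bundle for the plane curve $W\subset\bP^2$. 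For $k\ge 2$ these are incompatible: an Ulrich bundle of rank $d$ on a degree-$k$ plane curve has degree $d\,k(k-1)/2>0$, so it is never trivial. Concretely, triviality of $\pi'_*F$ forces $\deg F=g_Z-1-d(g_W-1)=dk^2(d-1)/2$, not your $\binom{dk}{2}=dk(dk-1)/2$; these agree only when $k=1$. So your candidate $F=\theta_Z\otimes h$ cannot be relatively Ulrich for $k\ge 2$, and even with the corrected degree, vanishing of $h^0$ and $h^1$ of a twist does not imply triviality of a degree-zero pushforward on a positive-genus curve. Second, even within your framework the existence of the required equivariant ineffective theta characteristic is asserted rather than proved, and the role of the parity of $dk$ is only conjectured.

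The paper's mechanism is quite different and is where the evenness actually enters: by \cite[Theorem 5.1]{carlini2008complete} and \Cref{smoothnesstrans} one writes $F=F_1G_1+F_2G_2$ with $\deg F_1=k$, $F_1$ smooth and transversal to $F_2G_2$, and crucially $\deg F_2=\deg G_2=dk/2$. Taking $W=D=Z(F_1)$ (a \emph{specific} divisor in $|L|$, not a Bertini-generic one), the two sections $F_2|_D,G_2|_D$ of the same line bundle $\cO_{\bP^2}(dk/2)|_D$ define a morphism $h\colon D\to\bP^1$ pulling the branch points $\{0,\infty\}$ of the standard self-cover $f\colon\bP^1\to\bP^1$, $x\mapsto x^d$, back to $B\cap D$; by uniqueness of cyclic covers, $\pi'\colon C\to D$ is the base change of $f$ along $h$, and the relatively Ulrich line bundle is simply the pullback of $\cO_{\bP^1}(d-1)$, which is relatively Ulrich for $f$ by \Cref{linelemma} and stays so by flat base change. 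Without some such explicit construction your proof does not close.
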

A sketch of the proof is as follows. By \cite[Theorem 5.1]{carlini2008complete} any degree $d$ polynomial $F$ defining a plane curve can be written as $F \,=\, P_1Q_1 \,+\, P_2Q_2$ where $\text{degree}\,(P_1)\,=\, d_1$, $\text{degree}\,(P_2)\,=\, d_2$ for $d_1 \,\le\, d_2\, <\, d$. In Proposition \ref{smoothnesstrans}, we prove that for a generic plane curve defined by $F$, we can choose $P_1$ to be smooth, and $P_1$ intersects both $P_2$ and $Q_2$ transversally. A suitable application of Theorem  \ref{introthm3} reduces the proof of  Theorem \ref{introthm4} to showing the existence of a \ul line bundle for self-covering of $\bP^1$. It would be nice to prove the existence of a rank $d$ \ul bundle on $X$ when $d \,\cdot\, k$ is odd (see Question \ref{parityquest}). At present, using methods developed in this paper, we have estimated the rank when $d \,\cdot\, k$ is odd; see Theorem \ref{dkodd}, Proposition \ref{easycase}, Corollary \ref{oddcase}, and \S \ref{difficultcase}. As a further application of our methods, we have produced alternative proofs of two results of \cite{parameswaran2021ulrich}, new examples of cyclic coverings of $\bP^2$ of degree $d$ that admit Ulrich line bundles, and a new example in \S \ref{six}.\\

The outline of the paper is as follows. In \S \ref{two} we recall the definition and existence of matrix factorizations of homogeneous polynomials. We also define cyclic coverings and mention a few properties of them. In \S \ref{three} we define \ul sheaves on a projective variety. The main result of this section is the existence of \ul bundle in \S \ref{veroneseargumenttheorem}, and the main ingredient in the proof is Proposition \ref{keythm}. In \S \ref{four} we define the notion of relatively \ul bundle for a finite, surjective covering of varieties. We prove the main theorem of this section regarding the extension of a relatively \ul bundle in Theorem \ref{generalextensionthm}. Further, we prove the existence of \ul bundle on cyclic coverings of projective spaces under the assumption of the existence of a relatively \ul bundle on certain cyclic coverings of complete intersection curves in Theorem \ref{completeintthm}. In \S \ref{five} we deal with the case of cyclic coverings of $\bP^2$. The main result of this section is regarding the rank of \ul bundle when the degree of the branch locus of the cyclic covering is even in Theorem \ref{evenparitythm}. Further, we provide an estimation of rank in the odd degree case in Theorem \ref{dkodd}. In \S \ref{six} we prove two corollaries which were earlier proven in \cite{parameswaran2021ulrich} using different methods. We have provided new examples of cyclic coverings of $\bP^2$ of degree $d$ that admit Ulrich line bundles, as well as  a new example.

\section{Preliminaries}
\label{two}
 We will briefly introduce the notion of matrix factorization following \cite{HERZOG1991187} and \cite{BFb0082014}, which will be used later to prove the existence of \ul sheaves. We will also define the notion of cyclic coverings and make a few remarks on them.
\subsection{Matrix factorization}
\begin{defn} \label{MF}
	A homogeneous polynomial $f$ of degree $d$ in $\bC[x_1\,,\, x_2\,,\, \cdots,\, x_n]$ has a (linear) matrix factorization of size $m$ if $\alpha_1\, \cdot\, \alpha_2\, \cdots \,\alpha_d \,=\, f \,\cdot \,\id_{m \,\times\, m}$, where $\alpha_i$ are matrices of order $m \,\times\, m$ with entries of linear homogeneous polynomials in $\bC[x_1\,,\, x_2\,,\, \cdots,\, x_n]$.
\end{defn}

\begin{examples}
	\begin{itemize}
		\item [(1)] Let $f \,=\, x_{1} \,\cdot\, x_{2} \,\cdots\, x_{n}$, then setting $\alpha_i \,=\,(x_{i})$ gives a matrix factorization of size $1$.
		
		\item [(2)] Let $f \,=\, t^{3} \,-\, x \,\cdot\, \, y \cdot\, z$ be in $\bC[t,\, x,\, y,\, z]$. Then $f$ is an irreducible polynomial. Let $A$ be the matrix
		$A \,=\, \begin{bmatrix}
			0 & 0 & x \\
			y & 0 & 0 \\
			0 & z & 0 
		\end{bmatrix} $ which satisfies $A^{3} \,=\, (xyz)\,\cdot\, \id_{3 \times 3}$. By setting $\alpha_1 \,=\, \alpha_2 \,=\, \alpha_3 \,=\, A$, we get a matrix factorization of the polynomial $xyz$ of size $3$. Let $\zeta$ be a primitive cubic root of unity. We set $\beta_1 \,=\, (t \,\cdot\, \id_{3 \times 3}\,-\, A),\, \beta_2 \,=\, (t \,\cdot\, \id_{3 \times 3} \,-\,\zeta A)$, and  $\beta_3= (t \,\cdot\, \id_{3 \times 3}\,-\,\zeta^2 A)$, where $t \,\cdot\, \id_{3 \times 3}$ is the scalar matrix of size $3 \times 3$ with entry $t$. Then we see $\beta_1 \,\cdot\, \beta_2 \,\cdot \,\beta_3 \,=\, f \,\cdot\, \id$.
	\end{itemize}
\end{examples}

Let $f$ be a degree $d$ homogeneous polynomial. The existence of a matrix factorization of finite size for $f$ is proved in the following
\begin{proposition}\cite[Lemma 1.5]{HERZOG1991187}
	\label{existencematrix}
	Let $g \,=\, \sum_{i=1}^{s}\,\prod_{j=1}^{d}\, (a_{ij} \,\cdot\, x_{ij})$ be a homogeneous polynomial of degree $d$, then there is a matrix factorization ${\beta} \,=\, ({\beta_{1}},\, {\beta_{2}},\, \cdots,\, {\beta_{d}})$ in $\bC[x_{ij} : 1\,\le\, i \,\le\, s, 1 \,\le\, j \,\le\, d]$ of size $d^{s \,-\,1}$.
\end{proposition}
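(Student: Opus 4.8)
The plan is to induct on the number $s$ of summands; the target size $d^{s-1}$ already signals that one should start from a trivial factorization and multiply the size by $d$ each time a new product term is absorbed. For the base case $s=1$ we have $g=\prod_{j=1}^d a_{1j}x_{1j}$, a single product of $d$ linear forms, and setting $\beta_j=(a_{1j}x_{1j})$ gives a linear matrix factorization of size $1=d^{0}$, exactly as in Examples (1).

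The inductive step is the heart of the argument. I would write $g=g'+p$, where $g'=\sum_{i=1}^{s-1}\prod_{j=1}^d a_{ij}x_{ij}$ and $p=\prod_{j=1}^d \ell_j$ with $\ell_j=a_{sj}x_{sj}$. By the inductive hypothesis there is a linear matrix factorization $(A_1,\dots,A_d)$ of $g'$ of size $m=d^{s-2}$, that is $A_1\cdots A_d=g'\cdot\id_m$, while $(\ell_1),\dots,(\ell_d)$ is the trivial size-$1$ factorization of $p$. The plan is to combine these into a factorization of $g'+p$ by a tensor product of matrix factorizations: on the free module $\bC^{d}\otimes \bC^{m}$ one builds $d\times d$ block matrices $\beta_1,\dots,\beta_d$ (blocks of size $m$) whose diagonal blocks carry cyclic shifts of the $A_k$ and whose off-diagonal blocks carry the linear forms $\ell_j$ (with appropriate signs, or powers of a primitive $d$-th root of unity, exactly as the roots-of-unity trick in Examples (2) is used to factor $t^d-xyz$). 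Since every entry is linear, the resulting factorization is automatically linear, and its size is $d\cdot m=d\cdot d^{s-2}=d^{s-1}$, as required.

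The step I expect to be the main obstacle is verifying that the ordered product $\beta_1\cdots\beta_d$ telescopes to exactly $(g'+p)\cdot\id_{dm}$. The diagonal contributions should reassemble $A_1\cdots A_d=g'\cdot\id$ together with $\ell_1\cdots\ell_d=p$, while all mixed cross terms must cancel; for the $d=2$ case the cancellation is a short computation, but for general $d$ it is the genuinely combinatorial part, and it is precisely where the placement of the $\ell_j$ among the off-diagonal blocks and the choice of $d$-th roots of unity on the diagonal must be tuned so that every term other than $g'$ and $p$ disappears. Once this combination lemma is in place, iterating it $s-1$ times from the size-$1$ base case yields the factorization of $g$ of size $d^{s-1}$ and completes the induction.
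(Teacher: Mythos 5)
The paper itself contains no proof of this proposition: it is quoted verbatim from \cite[Lemma 1.5]{HERZOG1991187}, so there is no internal argument to measure your attempt against, and the comparison has to be with the cited source. Your overall architecture --- induct on $s$, absorb one product of $d$ linear forms at a time, multiplying the size by $d$ at each step --- is exactly the right one and matches that source, and your base case is correct.

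The genuine gap is that the entire mathematical content sits inside the ``combination lemma'' you defer: given $A_1\cdots A_d=g'\cdot\id_m$ and linear forms $\ell_1,\dots,\ell_d$, exhibit $\beta_1,\dots,\beta_d$ of size $dm$ with $\beta_1\cdots\beta_d=(g'+\ell_1\cdots\ell_d)\id_{dm}$. You describe the shape of the $\beta_k$ only up to signs and roots of unity ``to be tuned,'' and for $d\ge 3$ that tuning is not a routine afterthought --- it is the proof. To close it: take $\beta_k=\mathrm{blockdiag}(A_k,A_{k+1},\dots,A_{k+d-1})+\zeta^kV$ (indices mod $d$), where $V$ carries $\ell_j\cdot\id_m$ in block position $(j+1,j)$ and nothing else, and $\zeta$ is a primitive $d$-th root of unity; so the shifts of the $A_k$ sit on the diagonal, the $\ell_j$ occupy a single weighted cyclic permutation (not arbitrary off-diagonal slots), and the roots of unity scale $V$ rather than the diagonal. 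Since the blocks of $V$ are scalars, $V\cdot\mathrm{blockdiag}(B_1,\dots,B_d)=\mathrm{blockdiag}(B_d,B_1,\dots,B_{d-1})\cdot V$, so every monomial in the expansion of $\beta_1\cdots\beta_d$ can be pushed into the form (block-diagonal)$\cdot V^r$. One then checks that all terms with exactly $r$ factors of $V$, $0<r<d$, have the \emph{same} block-diagonal part, namely $A_iA_{i+1}\cdots A_{i+d-r-1}$ in slot $i$, so they collect the scalar $e_r(\zeta,\zeta^2,\dots,\zeta^d)=0$; the $r=0$ term requires the auxiliary fact that every cyclic rotation $A_iA_{i+1}\cdots A_{i-1}$ of a matrix factorization of a nonzero polynomial again equals $g'\cdot\id_m$ (conjugate by $A_1$ over the fraction field); and the $r=d$ term is $e_d(\zeta,\dots,\zeta^d)V^d=(-1)^{d+1}\ell_1\cdots\ell_d\cdot\id$, whose sign is absorbed by replacing $\ell_1$ with $(-1)^{d+1}\ell_1$. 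With this verification supplied, your induction closes exactly as you intend; without it, the proposal is a correct plan rather than a proof.
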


We would like to note that the statement in \cite[Lemma 1.5]{HERZOG1991187} concerns the existence and size of a matrix factorization of the generic form $g=\sum_{i=1}^s \prod_{j=1}^d x_{ij}$. This statement is used to prove the existence of a matrix factorization of $f \in I^{d}$, where $I$ is an ideal of a commutative ring $R$ \cite[Theorem 1.2]{HERZOG1991187}. More generally, the arguments in  \cite[Lemma 1.5]{HERZOG1991187} provide a proof of the Proposition \ref{existencematrix}. Observe that a homogeneous polynomial of the form $\sum_{i=1}^{s}\prod_{j=1}^{d}a_{ij}x_{ij}$ may contain fewer number of terms than the generic form $\sum_{i=1}^s \prod_{j=1}^d x_{ij}$, since some of the coefficients $a_{ij}$ may be zero. From the proof of the Proposition \ref{existencematrix}, we deduce that the size of a matrix factorization of $g = \sum_{i=1}^{s}\prod_{j=1}^{d}a_{ij}x_{ij}$ can  be strictly less than $d^{s-1}$. 

For completeness and clarity, we provide a proof of the Proposition \ref{existencematrix} (for more details, see \cite[Lemma 1.5]{HERZOG1991187}). There is a bijective correspondence between matrix factorizations of a homogeneous polynomial $f$ of degree $d$ and $\bZ/d\bZ$ graded modules over the universal Clifford algebra of $f$ \cite[Theorem 1.3]{BFb0082014}. The crucial element in the proof is that matrix factorization for the sum of two polynomials $g_{1}$ and $g_{2}$ corresponds to a certain tensor product between the generalized Clifford algebras associated to $g_{1}$ and $g_{2}$. 

\begin{proof}[Proof of Proposition \ref{existencematrix}]
 We will induct on $s$. For $s=1$, the monomial $\prod_{j=1}^{d}a_{1j}x_{1j}$ will have a matrix factorization $(a_{11}x_{11})(a_{12}x_{12}) \cdots (a_{1d}x_{1d})$ of size $1$. Suppose that $\tilde{g}=\sum_{i=1}^{s-1}\prod_{j=1}^{d}a_{ij}x_{ij}$ has a matrix factorization $\tilde{\beta} = (\tilde{\beta_{1}}, \tilde{\beta_{2}}, \cdots, \tilde{\beta_{d}})$. Let $\zeta$ be a primitive $d^{th}$ root of unity, and $p=d^{s-2}$. Let $\beta_l$ be the following matrix\\
 \begin{center}
 
 $\beta_l = \begin{bmatrix}
 	\tilde{\beta_{l-1}} & \zeta^{l-1}a_{s1}x_{s1} \id_{p} & 0 & 0 & \cdots & \cdots & 0 \\
 	0 & \tilde{\beta_{l-2}} & \zeta^{l-2}a_{s2}x_{s2}\id_{p} & 0 & \cdots & \cdots & 0 \\
 	0 & 0 & \tilde{\beta_{l-3}} & \cdots & \cdots &  & \vdots \\
 	\vdots & 0 &  &  & \vdots & \vdots & \vdots \\
 	\vdots & \vdots &  &  & \vdots & \vdots & 0 \\
 	0 & \vdots &  &  &  & \vdots & \zeta^{l-d+1}a_{sd-1}x_{sd-1}\id_{p} \\
 	\zeta^{l-d}a_{sd}x_{sd}\id_{p} & 0 & 0 & \cdots & \cdots & 0 & \tilde{\beta_{l-d}} 
 \end{bmatrix}  $\\

 \end{center}
Here $\tilde{\beta_{i}} = \tilde{\beta_{j}}$ for $j \equiv i (\text{mod}\; d)$, and $\id_p$ is the identity matrix of size $p$. Let $\tilde{\beta}$ corresponds to the graded module $\tilde{M}$ over the Clifford algebra $c(\tilde{g})$. Let $N$ be the trivial module over the Clifford algebra $c(a_{s1}x_{s1} \cdot a_{s2}x_{s2} \cdots a_{sd}x_{sd})$. Then it can be seen that for a suitably chosen basis of $\tilde{M}$ and $N$, the tensor product $\tilde{M} \,\hat{\otimes}\, N$ is the module associated with the matrix factorization $({\beta_{1}} \,\cdot\, {\beta_{2}} \,\cdot\, \,\cdots\,  {\beta_{d}})$. Thus ${\beta} \,=\, ({\beta_{1}},\, {\beta_{2}},\, \cdots,\, {\beta_{d}})$ is the matrix factorization of $\tilde{g}\,+\, a_{s1}x_{s1} \,\cdot\, a_{s2}x_{s2} \,\cdots\, a_{sd}x_{sd}\,=\, g$. 
\end{proof}

\begin{remark}
	\label{keymfremark}
	The crucial takeaway from the proof of the Proposition \ref{existencematrix} is as follows. Let $f$ be a homogeneous polynomial of degree $d$ and 
	$$B_1 \,\cdot\, B_2 \,\cdots\, \,\cdot\, B_d \,=\, f \,\cdot\, \id_{m \times m}$$
	be a matrix factorization of $f$ of size $m$. Then the arguments in the proof of the Proposition \ref{existencematrix} shows that the polynomial $(z_1 \,\cdot\, z_2 \,\cdots\, \,\cdot\, z_d \,+\, f)$ admits a matrix factorization of size $d \cdot m$
	$$Q_1 \,\cdot\, Q_2 \,\cdots\, \,\cdot\, Q_d \,=\, (z_1 \,\cdot\, z_2 \,\cdots\, \,\cdot\, z_d \,+\, f) \,\cdot\, \id_{dm \times dm}.$$
	Let $\{C_i\}_{i=1}^d$ and $\{D_i\}_{i=1}^d$ be two matrix factorizations of size $n_1$ and $n_2$ of two degree $d$ homogeneous polynomials $g$ and $h$ respectively. Then, more generally, similar arguments show that, there exists a matrix factorization $\{A_i\}_{i=1}^d$ of size $d \, \cdot\, n_1 \,\cdot\, n_2$ of the polynomial $(g \,+\, h)$ (see \cite[Theorem 4.2.5]{CostaMiróRoigPonsLlopis}).
\end{remark}

\begin{remark} \label{matrixfactorizationsize}
    Given a matrix factorization of size $m$ of a homogeneous polynomial $f$ of degree $d$
    $$B_1 \cdot B_2 \cdot \cdots \cdot B_d=f \cdot \id_{m \times m},$$
    one can always construct a matrix factorization of $f$ of size $n \cdot m$ for any $n \in \bN$ as follows. Consider the matrices
    $$A_i \,=\, \begin{bmatrix}
B_i & 0 & 0 & \cdots & \cdots & 0 & 0 \\
0 & B_i & 0 & \cdots & \cdots & 0 & 0 \\
\vdots & 0 & B_i & \cdots & \cdots & 0 & \cdots \\
\vdots & \vdots & 0 & \cdots & \cdots &  & \cdots \\
0 & \cdots & \cdots & 0 & \cdots & 0 & \cdots \\
0 & 0 & \cdots & \cdots & 0 & B_i & 0 \\
0 & 0 & \cdots & \cdots & \cdots & 0 & B_i 
\end{bmatrix}  $$
such that each block matrix $B_i$ on the diagonal is of size $m$, and the remaining entries are zero. Thus, each $A_i$ is a matrix of size $mn \times mn$. It follows that 
$$A_1 \cdot A_2 \cdot \cdots \cdot A_d=f \cdot \id_{mn \times mn}.$$
\end{remark}

\subsection{Cyclic coverings of projective spaces} \label{cycliccovering}
Let $X$ be a projective variety of dimension $n$. Let $\bL$ be a line bundle on $X$, and $s \in H^0(X, \bL^{\otimes d \cdot k})$. Let $\bM$ be the line bundle $\bL^{\otimes k}$. Then $s$ is in $H^0(X, {\bM^{\otimes d}})$. By a cyclic covering of $X$ of degree $d$ for the data $(\bM, s)$, we mean the following construction \cite[Section 4.1.B]{lazarsfeld2004positivity}\\

Let $\pi\,:\, Z \,=\, \text{spec}(\text{sym}(\bM^{-1})) \,\longrightarrow\, X$ be the total space of rank $1$ locally free sheaf $\bM$. Then the line bundle $\pi^* \bM$ over $Z$ has a tautological section $T$. The vanishing $\{T=0\}$ defines the zero section $z:X \longrightarrow Z$. 
We define the hypersurface $Y$ as
$$Y \,=\, \text{Zero}(T^d-\pi^*s) \,\subset\, Z$$
Then the restriction $\pi:\, Y \longrightarrow X$ is the associated cyclic covering of degree $d$. The section $T^d-\pi^*s$ defines a map $\cO_Z \longrightarrow \pi^*{\bM^{\otimes d}}$ and hence the injective map $\pi^*{\bM^{\otimes -d}} \longrightarrow \cO_Z$. The ideal sheaf of $Y$ is the image of this map and is isomorphic to $(\bM^{-d} \oplus \bM^{-d-1} \oplus \cdots)$ as an $\mathcal{O}_{X}$ module. Thus, $\pi_* \cO_Y$ is the following 

\begin{equation}
	\pi_*\mathcal{O}_Y \,\isom\, \mathcal{O}_{X} \oplus \bM^{-1} \oplus\bM^{-2} \oplus \cdots \oplus \bM^{-d+1}.
\end{equation}

When $X = \bP^n$, we can also construct the cyclic covering as follows. Let $B$ be a hypersurface defined by a polynomial $f$ of degree $d \cdot k$, for $k \in \bN$ i.e., $f \in H^0(\bP^n, \mathcal{O}_{\bP^n}(d \cdot k))$. We consider the weighted projective space $\bP(1, 1, \cdots, 1, k)$ of dimension $(n+1)$. Let $Y$ be the hypersurface defined by $(t^d-f)$ in $\bP(1, 1, \cdots, 1, k)$. For $k=1$, the space $\bP(1, 1, \cdots, 1, k)$ is $\bP^{n+1}$, and for $k>1$,  the space $\bP(1, 1, \cdots, 1, k)$ has only one singular point $(0,0, \cdots, 0, 1)$. Projection from this point will define a cyclic covering $\pi\,:\, Y \longrightarrow X$. 

\begin{remark}
	\begin{itemize}
		\label{singcyclic}
		\item [(1)] The complement of $(0,0, \cdots, 0, 1)$ in $\bP(1, 1, \cdots, 1, k)$ can be identified with the total space $Z=\text{spec}(\text{sym}(\mathcal{O}_{\bP^n}(-k)))$. The variable $t$ restricted to $Z$ is the tautological section $T$. Thus, the cyclic covering $\pi: Y \longrightarrow \bP^n$ in this construction coincides with the previous one.
		 
		\item [(2)]Locally, $Y$ is the $d^{th}$ root construction of a regular section $s$ of an affine open subscheme $\text{spec}(A)$ of $X$. The local structure of $Y$ is $\text{spec}(\frac{A[t]}{t^d-s})$ with the natural map $A \longrightarrow \frac{A[t]}{t^d-s}$. Thus, cyclic covering is a unique construction. 
		
		\item [(3)] Let $X$ be a smooth variety. Then, using the Jacobian criterion on the local description, we see that $B = zero(s)$ is a smooth hypersurface if and only if $Y$ is smooth. 
	\end{itemize}
\end{remark}

\section{Existence of Ulrich bundles on cyclic coverings of $\mathbb{P}^n$}
\label{three}
In this section, we will prove the existence of \ul bundles on cyclic coverings of $\bP^n$ of arbitrary degree. We will employ a sheaf theoretic argument using matrix factorization of polynomials.
\subsection{\ul Sheaves } Let $Y$ be a projective variety. Let $Y \longhookrightarrow \bP^{N}$ be an embedding given by a very ample line bundle $H$ on $Y$.
\begin{definition}
	\label{uldefin}
	Let $E$ be a coherent sheaf on $Y$ such that
	$\text{dim}(\text{supp}(E))\,=\, k$. Then $E$ is called an Ulrich sheaf if
	the following cohomology groups vanish: $H^{i}(Y, \, E(d))=0$ for all
	$1 \le i \le (k-1)$ and all $d \,\in \, \mathbb{Z} $,
	$H^{0}(Y,\, E(j))=0$ for all $j <0$, and $H^{k}(Y,\, E(j))\,=\,0$ for
	all $j \ge -k$.
\end{definition} 
\ul sheaves have several equivalent characterizations, as follows:

\begin{proposition}\cite[Proposition 2.1]{eisenbud2003resultants}\label{Eisenbuddefinition}
	Let $E$ be a coherent sheaf on the projective variety $Y$ such that $\text{dim} \,\, Y \,=\, k \,>\,0$ and $\text{supp}\,(E)\,=\, Y$. Then the following conditions are equivalent: 
	\begin{itemize}
		\item [(D1)] Let $E$ be an \ul sheaf.
		\item [(D2)] The cohomology $H^{i}(Y,\, E(-i))\, =\,0$ for $i \, >\, 0$, $H^{i}(Y,\, E(-i-1)) \,=\,0$ for $i \, <\, k$.
		\item [(D3)]For some (respectively all) finite linear projections $\pi \,:\, Y \longrightarrow \bP^k$, the sheaf $\pi_* E$ is the trivial sheaf $\cO_{\bP^k}^{\oplus t}$ for some $t$. 
	\end{itemize}
\end{proposition}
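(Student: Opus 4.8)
The plan is to prove the three conditions equivalent by establishing the cycle $(\mathrm{D1})\Rightarrow(\mathrm{D2})\Rightarrow(\mathrm{D3})\Rightarrow(\mathrm{D1})$, after reducing every cohomological assertion on $Y$ to the corresponding one on $\bP^k$ via a finite linear projection. The key preliminary observation is that for any finite linear projection $\pi:Y\to\bP^k$ one has $\pi^*\cO_{\bP^k}(1)=H=\cO_Y(1)$; since $\pi$ is finite, $R^{>0}\pi_*=0$, and the projection formula gives $\pi_*\bigl(E(j)\bigr)\cong(\pi_*E)(j)$, whence
$$H^i\bigl(Y, E(j)\bigr)\cong H^i\bigl(\bP^k,(\pi_*E)(j)\bigr)\qquad\text{for all } i,j.$$
Thus, writing $F:=\pi_*E$ (a coherent sheaf of dimension $k$ with full support on $\bP^k$), conditions (D1) and (D2) for $E$ are literally the same conditions for $F$, and (D3) becomes the assertion $F\cong\cO_{\bP^k}^{\oplus t}$. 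This reduction also reconciles the \emph{some/all} clause in (D3) once the remaining implications are in place.

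Two of the three implications are short. For $(\mathrm{D1})\Rightarrow(\mathrm{D2})$ I would simply specialize: the intermediate vanishing of Definition~\ref{uldefin} gives $H^i(F(-i))=0$ and $H^i(F(-i-1))=0$ for $0<i<k$, while the boundary conditions $H^0(F(j))=0$ for $j<0$ and $H^k(F(j))=0$ for $j\ge -k$ supply the endpoints $i=k$ and $i=0$ of (D2). For $(\mathrm{D3})\Rightarrow(\mathrm{D1})$ I would feed $F=\cO_{\bP^k}^{\oplus t}$ into the displayed isomorphism and read off all three Ulrich conditions from the classical cohomology of line bundles on $\bP^k$, namely $H^i(\cO(j))=0$ for $0<i<k$, $H^0(\cO(j))=0$ for $j<0$, and $H^k(\cO(j))=0$ for $j\ge -k$.

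The substance of the proof is $(\mathrm{D2})\Rightarrow(\mathrm{D3})$, which I would prove by induction on $k$ as a self-contained statement about a coherent sheaf $F$ of dimension $k$ on $\bP^k$ satisfying (D2). The first half of (D2), $H^i(F(-i))=0$ for $i>0$, is exactly $0$-regularity in the sense of Castelnuovo--Mumford, so $F$ is globally generated and $m$-regular for every $m\ge 0$. For $k=1$ one writes $F$ as a direct sum of line bundles plus a torsion sheaf; the condition $H^0(F(-1))=0$ forces the torsion to vanish and all twists to be $\le 0$, while $H^1(F(-1))=0$ forces them $\ge 0$, so $F\cong\cO_{\bP^1}^{\oplus t}$. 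For the inductive step I choose a general hyperplane $H=\{\ell=0\}$, so that $\ell$ is a nonzerodivisor on $F$ and
$$0\to F(-1)\xrightarrow{\ \ell\ }F\to F_H\to 0$$
is exact with $F_H$ a $(k-1)$-dimensional sheaf on $H\cong\bP^{k-1}$. A chase on the long exact sequences shows that regularity restricts (so $F_H$ is again $0$-regular) and that the bottom vanishing of (D2) propagates, so $F_H$ satisfies (D2) on $\bP^{k-1}$; by induction $F_H\cong\cO_H^{\oplus t}$. Since $H^0(F(-1))=H^1(F(-1))=0$, the sequence identifies $H^0(F)\cong H^0(F_H)=\bK^{t}$, and a basis yields a surjection $\mathrm{ev}:\cO_{\bP^k}^{\oplus t}\to F$ whose restriction to $H$ is the isomorphism $\cO_H^{\oplus t}\xrightarrow{\sim}F_H$.

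It remains to show $K:=\ker(\mathrm{ev})$ is zero. Restricting $0\to K\to\cO^{\oplus t}\to F\to 0$ to $H$ and using $\operatorname{Tor}_1(F,\cO_H)=0$ (again because $\ell$ is a nonzerodivisor) gives $K/\ell K=K\otimes\cO_H=0$; since $K\subseteq\cO^{\oplus t}$ is torsion-free and $\ell$ is a nonzerodivisor on it, multiplication by $\ell$ is an isomorphism $K(-1)\xrightarrow{\sim}K$, forcing the Hilbert polynomial of $K$ to be constant and hence $K=0$. Therefore $F\cong\cO_{\bP^k}^{\oplus t}$ for the chosen $\pi$, and since $\pi$ was an arbitrary finite linear projection this proves the \emph{all} form of (D3), closing the cycle. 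The main obstacle is precisely this implication $(\mathrm{D2})\Rightarrow(\mathrm{D3})$: the hypotheses of (D2) furnish vanishing only along the two diagonals $j=-i$ and $j=-i-1$, and the work lies in propagating this sparse information --- via regularity, general hyperplane restriction, and the Nakayama-type endgame --- to full triviality of the pushforward. A secondary technical point is that a general hyperplane be a nonzerodivisor on $F$; here the condition $H^0(F(-1))=0$ already excludes lower-dimensional associated subsheaves (as the $k=1$ case illustrates), so a general $\ell$ is indeed a nonzerodivisor.
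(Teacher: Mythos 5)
Your proof is correct and complete, but note that the paper itself contains no proof of this proposition: it is quoted verbatim from \cite[Proposition 2.1]{eisenbud2003resultants}, so the comparison can only be with the standard argument in the literature. Your opening reduction is exactly the classical first move: for a finite linear projection $\pi:Y\to\bP^k$ one has $\pi^*\cO_{\bP^k}(1)\cong\cO_Y(1)$, and the degenerate Leray spectral sequence plus the projection formula give $H^i(Y,E(j))\cong H^i(\bP^k,(\pi_*E)(j))$, so everything becomes a statement about $F=\pi_*E$ on $\bP^k$; this also correctly reconciles the some/all clause via the cycle (D3 some)$\Rightarrow$(D1)$\Rightarrow$(D2)$\Rightarrow$(D3 all). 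Where you genuinely diverge from the usual treatment is the core implication (D2)$\Rightarrow$(D3): the argument behind \cite{eisenbud2003resultants} is in effect module-theoretic --- one shows $\Gamma_*(F)$ is a maximal Cohen--Macaulay graded module over the polynomial ring, hence free by Auslander--Buchsbaum, with the two diagonal vanishings pinning all twists to zero --- whereas you stay entirely sheaf-theoretic, inducting on $k$ via Castelnuovo--Mumford $0$-regularity, restriction to a general hyperplane, and the $K/\ell K=0$ Hilbert-polynomial endgame. Your version is more elementary and self-contained; the module route is shorter if one has the depth machinery available. The verifications in your induction all check out: the long-exact-sequence chase does show that $F_H$ inherits both diagonals of (D2) on $\bP^{k-1}$ (the needed twists are again of the form $-i$ and $-i-1$ in the admissible ranges), $H^0(F(-1))=H^1(F(-1))=0$ identifies $H^0(F)\cong H^0(F_H)$, $\operatorname{Tor}_1(F,\cO_H)=0$ follows from $\ell$ being $F$-regular, and a torsion-free sheaf with constant Hilbert polynomial on $\bP^k$ ($k\ge 1$) is zero.

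One parenthetical justification is off, though harmlessly so: $H^0(F(-1))=0$ does \emph{not} by itself exclude lower-dimensional associated subsheaves --- for instance a subsheaf $\cO_L(-2)\subset F$ on $\bP^2$, with $L$ a line, is invisible to $H^0(F(-1))$. What your argument actually needs, and what is true unconditionally over the infinite field $\bC$, is that a general hyperplane avoids the finitely many associated points of $F$, which already makes $\ell$ a nonzerodivisor on $F$; no cohomological input is required for that step.
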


For a survey on \ul bundles we refer to \cite{beauville2018introduction}.\\

Let $X \,\subseteq\, \bP^{n+1}$ be a hypersurface defined by an irreducible polynomial $g \in \bC[x_0,\, x_1,\, \cdots, x_{n+1}]$ of degree $d$. Note that $X$ may not be smooth. Let $\alpha=(\alpha_1, \alpha_2, \cdots, \alpha_d)$ be a matrix factorization of $g$ of size $m$. The existence of such a matrix factorization is guaranteed by Proposition \ref{existencematrix}. We define the following short exact sequence
\begin{equation}
	\label{projdim}
	\begin{tikzcd}[sep=scriptsize]
		0 && {\mathcal{O}_{\mathbb{P}^{n+1}}(-1)}^{\oplus m} && {\mathcal{O}_{\mathbb{P}^{n+1}}}^{\oplus m} && G_i && 0
		\arrow[from=1-1, to=1-3]
		\arrow["{\alpha_i}", from=1-3, to=1-5]
		\arrow[from=1-5, to=1-7]
		\arrow[from=1-7, to=1-9]
	\end{tikzcd},
\end{equation}
where $G_i$ is the sheaf $\coker(\alpha_i)$.

The sheaves $G_i$ are supported on $X$. Since $X$ might have singularities, we can not confirm that $G_{i }$ has a finite projective dimension. Thus even though it can be computed that $\text{depth}_{\cO_{X, x}} (G_i)_x=n$, $G_i$ may not be a vector bundle. Let $\pi : X \longrightarrow \bP^n$ be a finite covering of degree $d$ defined by a linear projection. We will prove that the sheaves $G_i$ are \ul  on $X$.

\begin{proposition}
	\label{keythm}
	The sheaves $\pi_*G_i$ are trivial vector bundles of rank $m$.
\end{proposition}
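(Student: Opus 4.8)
The plan is to show that each $G_i$ is an \ul sheaf on $X$ in the sense of \Cref{uldefin}, and then to read off the conclusion from the equivalence of that notion with condition (D3) of \Cref{Eisenbuddefinition}: for a finite linear projection the latter asserts exactly that $\pi_*G_i$ is a trivial sheaf, and a global-sections count then fixes its rank. Since $X$ may be singular I would avoid working on $X$ altogether and instead extract all cohomology from the resolution \eqref{projdim} on the smooth space $\bP^{n+1}$; the fact that $G_i$ need not be locally free on $X$ is harmless, because both \Cref{uldefin} and \Cref{Eisenbuddefinition} are statements about coherent sheaves. As already observed, $G_i$ is supported on $X$ and $\mathrm{depth}_{\cO_{X,x}}(G_i)_x = n$, so $\dim\mathrm{supp}(G_i)=n$ and the relevant dimension in \Cref{uldefin} is $k=n$.

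The core is a cohomology computation. Because $X \hookrightarrow \bP^{n+1}$ is a closed immersion, $H^{\bullet}(X, G_i(j)) = H^{\bullet}(\bP^{n+1}, G_i(j))$ for every $j$, so I would twist \eqref{projdim} by $\cO_{\bP^{n+1}}(j)$ and analyze the long exact sequence
\[
\cdots \to H^{p}\!\big(\bP^{n+1}, \cO(j)^{\oplus m}\big) \to H^{p}\!\big(\bP^{n+1}, G_i(j)\big) \to H^{p+1}\!\big(\bP^{n+1}, \cO(j-1)^{\oplus m}\big) \to \cdots
\]
using that line-bundle cohomology on $\bP^{n+1}$ is concentrated in degrees $0$ and $n+1$. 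This yields the three requirements of \Cref{uldefin} in turn. For $1 \le p \le n-1$ both neighbouring groups vanish for all $j$, hence $H^{p}(G_i(j))=0$. For $j<0$ the sequence identifies $H^{0}(G_i(j))$ with the cokernel of $H^{0}(\cO(j-1)^{\oplus m}) \to H^{0}(\cO(j)^{\oplus m})$, which is $0$ since $H^0(\cO(j))=0$. Finally $H^{n}(G_i(j))$ embeds into $H^{n+1}(\cO(j-1)^{\oplus m})$, and the latter vanishes for $j-1 \ge -(n+1)$, i.e. for all $j \ge -n$. Thus $G_i$ satisfies (D1) and is \ul.

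Granting this, condition (D3) of \Cref{Eisenbuddefinition} applied to the degree-$d$ finite linear projection $\pi\colon X \to \bP^{n}$ (with $k=n$) gives $\pi_*G_i \cong \cO_{\bP^{n}}^{\oplus t}$, which is a trivial vector bundle. To determine $t$ I would pass to global sections: $\pi$ is finite, so $H^{0}(\bP^{n}, \pi_*G_i) = H^{0}(X, G_i) = H^{0}(\bP^{n+1}, G_i)$, and the untwisted sequence \eqref{projdim} gives $H^{0}(\bP^{n+1}, G_i) \cong \bC^{m}$ because $H^0(\cO(-1))=H^1(\cO(-1))=0$. Comparing with $H^{0}(\bP^{n}, \cO_{\bP^{n}}^{\oplus t})=\bC^{t}$ forces $t=m$, so $\pi_*G_i \cong \cO_{\bP^{n}}^{\oplus m}$, as claimed.

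I expect the only genuinely delicate point to be the bookkeeping of the boundary ranges in the cohomology computation — the cutoff $j<0$ for the vanishing of $H^0$ and the cutoff $j \ge -n$ for the top cohomology — together with correctly matching $k=\dim\mathrm{supp}(G_i)=n$ to the cohomological degrees in \Cref{uldefin}. Once the vanishing is in place, both the triviality and the rank computation are formal.
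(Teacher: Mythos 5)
Your argument is correct and is essentially identical to the paper's first proof: both verify the Ulrich cohomological vanishings for $G_i$ from the twisted long exact sequence of \eqref{projdim} on $\bP^{n+1}$, invoke (D3) of \Cref{Eisenbuddefinition} to get triviality of $\pi_*G_i$, and pin down the rank via $\dim H^0(G_i)=m$. (The paper additionally offers a second, independent geometric proof via a Fourier--Mukai transform on $\mathrm{Bl}_Q\bP^{n+1}$, but your single argument suffices.)
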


\begin{proof}
We will present two different proofs. The first involves verifying that $G_i$ satisfies the cohomological properties in Definition \ref{uldefin},  from which the triviality of  $\pi_*(G_i)$ follows as a consequence of Proposition \ref{Eisenbuddefinition}. The second proof is a direct geometric argument establishing the triviality of $\pi_*(G_i)$.\\
 
We begin by showing that $G_i$ satisfies the cohomological properties described in Definition \ref{uldefin}. Taking the long exact sequence in cohomology corresponding to \eqref{projdim}, we consider the following part of the exact sequence
 \begin{equation}
 	\label{lesmainthm}
 	\longrightarrow  H^k(\bP^{n+1}, \,\cO_{\bP^{n+1}}^{\oplus m}(t)) \longrightarrow H^k(X, \, G_i(t)) \longrightarrow H^{k+1}(\bP^{n+1},\, \cO_{\bP^{n+1}}^{\oplus m}(t-1)) \longrightarrow 
 \end{equation}
 If $1 \,\le k \,\le \text{dim}\,(X)\,-\,1\,=\, n-1$, then we see that $$H^k(\bP^{n+1},\, \cO_{\bP^{n+1}}^{\oplus m}(t))\,=\,0\,=\,H^{k+1}(\bP^{n+1},\, \cO_{\bP^{n+1}}^{\oplus m}(t-1))$$ for all $t \,\in\, \bZ$. Thus we will have $H^k(X, \, G_i(t))\,=\,0$. Also similarly it follows that $H^0(X, \, G_i(t))\,=\,0$ for $t \,<\,0$.
Since $H^n(\bP^{n+1}, \,\cO_{\bP^{n+1}}^{\oplus m}(t))\,=\,0$, putting $k \,=\, n$ in \eqref{lesmainthm} we will get the following exact sequence
$$0 \longrightarrow H^n(X,\, G_i(t)) \longrightarrow H^{n+1}(\bP^{n+1},\, \cO_{\bP^{n+1}}^{\oplus m}(t-1)) \longrightarrow $$ 
 By Serre duality $H^{n+1}(\bP^{n+1},\, \cO_{\bP^{n+1}}^{\oplus m}(t-1)) \cong H^0((\bP^{n+1},\, \cO_{\bP^{n+1}}^{\oplus m}(-t+1-n-2)))$. Thus for $t \,\ge -n$ we see that  $H^{n+1}(\bP^{n+1},\, \cO_{\bP^{n+1}}^{\oplus m}(t-1))=0$, and hence $H^n(X,\, G_i(t))\,=\,0$. Thus $G_i$ is an \ul sheaf. By Proposition \ref{Eisenbuddefinition} (D3) we will have $\pi_*(G_i)$ is trivial. Since \text{dim} $H^0(X, \, G_i)\,=\, m$, in particular $\pi_* (G_i) \,\cong\, \cO_{\bP^n}^{\oplus m}$.\\
 
 We now present a second proof, which directly establishes the triviality of $\pi_*(G_i)$ using geometric considerations. let $Q \in \bP^{n+1}$ be a point outside $X$, and $\bP^{n+1} \dashrightarrow \bP^n$ be the projection from $Q$. Let $\pi\,:\,X \,\longrightarrow\, \bP^n$ be the induced finite map. We consider the following diagram
 
\[\begin{tikzcd}
	& {\text{Bl}_{Q}\mathbb{P}^{n+1}} \\
	{} & {\mathbb{P}^{n+1}} \\
	X &&& {\mathbb{P}^n}
	\arrow["\rho", from=1-2, to=2-2]
	\arrow["p", from=1-2, to=3-4]
	\arrow[dashed, from=2-2, to=3-4]
	\arrow["i", hook, from=3-1, to=1-2]
	\arrow["i", hook', from=3-1, to=2-2]
	\arrow["\pi"', from=3-1, to=3-4]
\end{tikzcd}\]
  The inclusion $i \,:\, X \longhookrightarrow \bP^{n+1}$ lifts to $i \,:\, X \longhookrightarrow  {\text{Bl}_{Q}\mathbb{P}^{n+1}}$, and we have $\pi\,=\, p \circ i$. We apply the Fourier-Mukai transform $ p_* \,\circ\, \rho^* $ on \eqref{projdim}
 \begin{equation}
 	0 \longrightarrow p_* \circ \rho^*{\mathcal{O}_{\mathbb{P}^{n+1}}(-1)}^{\oplus m} \longrightarrow p_* \circ \rho^* {\mathcal{O}_{\mathbb{P}^{n+1}}}^{\oplus m} \longrightarrow \pi_* G_i \longrightarrow R^1p_* \circ \rho^*{\mathcal{O}_{\mathbb{P}^{n+1}}(-1)}^{\oplus m}  \longrightarrow 
 \end{equation}
 We can see that the Fourier-Mukai transform $p_* \circ \rho^*$ restricts to $p_* \circ i_* =\pi_*$ on $X$. The map $p:{\text{Bl}_{Q}\mathbb{P}^{n+1}} \longrightarrow {\mathbb{P}^n}$ is a $\mathbb{P}^1$ bundle. The fibers can be identified with the strict transform of the lines passing through the point $Q$
 
 \[\begin{tikzcd}
 	{p^{-1}\{x\}=\text{Bl}_{Q}\mathbb{P}^{1}} & {\text{Bl}_{Q}\mathbb{P}^{n+1}} \\
 	{\mathbb{P}^1} & {\mathbb{P}^{n+1}}
 	\arrow[hook, from=1-1, to=1-2]
 	\arrow["{\text{id}}"', from=1-1, to=2-1]
 	\arrow["\rho", from=1-2, to=2-2]
 	\arrow[hook, from=2-1, to=2-2]
 \end{tikzcd}\]
 Since $ {\rho^*{\mathcal{O}_{\mathbb{P}^{n+1}}(-1)}}_{|{p^{-1}\{x\}}} \cong {\mathcal{O}_{\mathbb{P}^{n+1}}(-1)}_{|\bP^1}=\cO_{\bP^1}(-1)$, we see that $ {\rho^*{\mathcal{O}_{\mathbb{P}^{n+1}}(-1)}}$ is the relative $\cO(-1)$ for $p$. Thus, the Fourier-Mukai transform $p_* \circ \rho^*{\mathcal{O}_{\mathbb{P}^{n+1}}(-1)}^{\oplus m} =0$, and $R^1p_* \circ \rho^*{\mathcal{O}_{\mathbb{P}^{n+1}}(-1)}^{\oplus m} =0$. Also, we see that $p_* \circ \rho^* ({\mathcal{O}_{\mathbb{P}^{n+1}}}^{\oplus m}) \,\cong\, \cO_{\bP^n}^{\oplus m}$. Hence, we get $\pi_* G_i \,\cong\, \cO_{\bP^n}^{\oplus m}$.
\end{proof}

As a consequence, we will get the main result of this section

\begin{theorem}
	\label{veroneseargumenttheorem}
	There exist \ul vector bundles on every smooth cyclic covering of  $\bP^n$. 
\end{theorem}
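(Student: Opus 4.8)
The plan is to realize the given covering as a base change of the hypersurface $\tilde Z$ of \Cref{keythm} and to restrict the matrix-factorization cokernel living on it. Let $\pi\colon X\to\bP^n$ be a smooth cyclic covering of degree $d$ branched over the smooth hypersurface $B=\mathrm{Zero}(f)$ with $\deg f=d\cdot k$, and let $v_k=|\cO_{\bP^n}(k)|\colon\bP^n\hookrightarrow\bP^N$ be the Veronese embedding, so that $v_k^*\cO_{\bP^N}(1)=\cO_{\bP^n}(k)$. Writing each degree-$(dk)$ monomial in the $x$-variables as a product of $d$ degree-$k$ monomials expresses $f$ as $f=g\circ v_k$ for a degree-$d$ polynomial $g$ in the Veronese coordinates. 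I set $\tilde Z=\mathrm{Zero}(t^d-g)\subseteq\bP^{N+1}$ and let $\tilde\pi\colon\tilde Z\to\bP^N$ be the projection from $Q=(0:\cdots:0:1)$; since $g$ is homogeneous of positive degree, $g(0)=0$ and hence $Q\notin\tilde Z$. By the uniqueness of the local $d$-th root construction (\Cref{singcyclic}(2)), over an affine chart $\operatorname{Spec}(A)$ of $\bP^n$ the fibre product $\tilde\pi^{-1}(v_k(\bP^n))$ is $\operatorname{Spec}\bigl(A[t]/(t^d-f)\bigr)$, so under the identification $v_k(\bP^n)\cong\bP^n$ one has $X=\tilde\pi^{-1}(v_k(\bP^n))$ and $\pi=\tilde\pi|_X$, fitting into a Cartesian square with closed immersion $j\colon X\hookrightarrow\tilde Z$.

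Next I choose a matrix factorization $\alpha=(\alpha_1,\dots,\alpha_d)$ of $t^d-g$ of size $m$, which exists by \Cref{existencematrix} since $t^d-g$ is a sum of products of $d$ linear forms, and I form $G_i=\coker(\alpha_i)$ as in \eqref{projdim} (with $N+1$ in place of $n+1$). By \Cref{keythm}, $\tilde\pi_*G_i\cong\cO_{\bP^N}^{\oplus m}$. Since $\tilde\pi$ is finite and $\tilde\pi_*G_i$ is locally free, the sheaf $G_i$ is flat over $\bP^N$; because $\tilde\pi$ is affine and $G_i$ is flat over the base, the formation of $\tilde\pi_*G_i$ commutes with the base change $v_k$, giving $\pi_*(j^*G_i)\cong v_k^*(\tilde\pi_*G_i)\cong\cO_{\bP^n}^{\oplus m}$. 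Thus the restriction $E:=j^*G_i$ is a coherent sheaf on $X$ with trivial pushforward $\pi_*E\cong\cO_{\bP^n}^{\oplus m}$.

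The crux, and the only place where smoothness of $X$ is used beyond \Cref{keythm}, is to upgrade $E$ from a sheaf to a vector bundle; I expect this local-freeness step to be the main obstacle, since \Cref{keythm} only produces an \ul \emph{sheaf} on the possibly singular $\tilde Z$. As a cokernel of a matrix factorization, $G_i$ is maximal Cohen--Macaulay on $\tilde Z$, hence locally free on the smooth locus $\tilde Z^{\mathrm{sm}}$ (on a regular local ring a maximal Cohen--Macaulay module is free). I claim $X\subseteq\tilde Z^{\mathrm{sm}}$. The Jacobian criterion gives $\operatorname{Sing}(\tilde Z)=\{t=0\}\cap\{g=0\}\cap\{\partial g/\partial z_j=0\ \forall j\}$. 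A point of $X$ over $p\in\bP^n$ satisfies $t^d=f(p)$; if it were singular then $t=0$, forcing $p\in B$ and $\partial g/\partial z_j=0$ at $v_k(p)$ for all $j$. By the chain rule $\frac{\partial f}{\partial x_i}=\sum_j\frac{\partial g}{\partial z_j}(v_k(p))\,\frac{\partial (v_k)_j}{\partial x_i}$, this forces $df(p)=0$, so $p$ would be a singular point of $B$, contradicting smoothness of $B$ (equivalently of $X$, by \Cref{singcyclic}(3)). Hence $X\subseteq\tilde Z^{\mathrm{sm}}$ and $E=G_i|_X$ is a vector bundle; since $\pi_*E\cong\cO_{\bP^n}^{\oplus m}$ and $\deg\pi=d$, its rank is $m/d$. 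Finally $E$ is a vector bundle whose pushforward under the finite linear projection $\pi$ is trivial, so by \Cref{Eisenbuddefinition}(D3) it is an \ul bundle on $X$ with respect to $\pi^*\cO_{\bP^n}(1)$, which completes the proof.
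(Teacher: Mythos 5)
Your proof is correct and follows essentially the same route as the paper: the Veronese embedding to realize $X$ as the base change of the degree-$d$ hypersurface $\tilde Z=\mathrm{Zero}(t^d-g)\subseteq\bP^{N+1}$, a matrix factorization of $t^d-g$ via \Cref{existencematrix}, triviality of $\tilde\pi_*G_i$ from \Cref{keythm}, and affine base change along the Cartesian square. Your local-freeness step (maximal Cohen--Macaulayness on the smooth locus, together with the chain-rule verification that $X\subseteq\tilde Z^{\mathrm{sm}}$) is exactly the content the paper places in the remark immediately following the theorem, argued there via depth and the Auslander--Buchsbaum formula.
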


\begin{proof}
	We will generalize the method of \cite{kumar2023ulrichbundlesdoublecovers}. Let $\pi : Z \longrightarrow \mathbb{P}^{n}$ be a smooth cyclic covering of degree $d$. Then the branch locus $B$ is a smooth hypersurface in $\mathbb{P}^{n}$ defined by a homogeneous polynomial $g$ of degree $d \cdot k$ for some $k \in \bN$. We consider the Veronese embedding
	\begin{equation}
		\begin{tikzcd}
			{\mathbb{P}^n} && {\mathbb{P}^N}
			\arrow["{|\mathcal{O}_{\mathbb{P}^n}(k)|}", hook, from=1-1, to=1-3]
		\end{tikzcd}
	\end{equation} 
	Changing the expression of $g$ by using Veronese coordinates, we can construct a new polynomial $g'$ of degree $d$ with variables in Veronese coordinates with the property: Let $B' \subseteq \bP^N$ be the hypersurface defined by the degree $d$ homogeneous polynomial $g'$, then we will have $B' \bigcap \mathbb{P}^{n} \,=\, B$. We define the hypersurface $\tilde{Z} \,=\, \text{zero}(t^{d}-g') \,\subseteq\, \mathbb{P}^{N+1}$. Then projection from $(0, 0, \cdots, 1) \in \bP^{N+1}$ will define a degree $d$ cyclic covering $\tilde{\pi} \,:\, \tilde{Z}\, \longrightarrow\, \mathbb{P}^{N}$ whose branch divisor is $B'$. We will have the following Cartesian diagram
	\begin{equation}
		\begin{tikzcd}
			{\text{Z}} && {\tilde{Z}} \\
			{\mathbb{P}^n} && {\mathbb{P}^N}
			\arrow["{|\mathcal{O}_{\mathbb{P}^n}(k)|}", hook, from=2-1, to=2-3]
			\arrow["\pi", from=1-1, to=2-1]
			\arrow["{\tilde{\pi}}"', from=1-3, to=2-3]
			\arrow[hook, from=1-1, to=1-3]
		\end{tikzcd}
	\end{equation}
By Proposition \ref{existencematrix}, there exists a matrix factorization of $t^d-g'$. In particular, if $g'=\sum_{i=1}^{s}\prod_{j=1}^{d}a_{ij}x_{ij}$, then $t^d\,-\,g'$ will have a matrix factorization of size $d^{s}$ by Remark \ref{keymfremark}. By Proposition \ref{keythm}, there exist sheaves $G_i$ of rank $d^{s-1}$ on $\tilde{Z}$ such that $\tilde{\pi}_*G_i \cong \cO_{\bP^{N}}^{d^{s}}$. By the base change theorem \cite[\href{https://stacks.math.columbia.edu/tag/02KG}{Tag 02KG}]{stacks-project}, we get
$$\pi_*({G_i}_{|Z}) \cong (\tilde{\pi}_*G_i)_{|\bP^n} \cong \cO_{\bP^{n}}^{d^s}$$
\end{proof}

\begin{remark}
	\begin{itemize}
		\item [(1)] Let $\text{sing}(B')$ be the singular locus of $B'$. Using $(2)$ of Remark \ref{singcyclic}, we see $\tilde{Z}' = \tilde{Z} \setminus \tilde{\pi}^{-1}(\text{sing}(B'))$ is a smooth open subvariety. Since the branch locus $B \subset B' \setminus \text{sing}(B')$, we will have $Z \subset \tilde{Z}'$. For $z \in \tilde{Z}$, $\text{depth}_{\mathcal{O}_{Z, z}}{(G_{i})_{z}} = \text{dim}(\mathcal{O}_{{Z},z})$. If $z \in \tilde{Z}'$, then the $\text{p.d.}(G_{i})_{z}$ is finite, and hence it is zero. This implies $(G_{i})_{z}$ is a projective module for $z \in \tilde{Z}'$. Hence the restriction $(G_{i})_{|Z}$ is a vector bundle.
		
		\item [(2)] We can work with a cyclic covering $X$ which is not smooth but has irreducible branch divisor. In this case, we will get an \ul sheaf instead of an \ul bundle. 
	\end{itemize}
	\end{remark}

In the following example, we will apply the results of this section to compute the rank of an \ul bundle on a degree $3$ smooth, cyclic covering of $\bP^2$.

\begin{example}
	Let $\pi\,:\,X \longrightarrow \bP^2$ be a smooth, cyclic covering of $\bP^2$ of degree $3$. The branch locus $B$ of the covering $\pi$ is a smooth curve in $\bP^2$, given by the zero locus of a polynomial $F$ of degree $3k$. Using \cite[Theorem 5.1]{carlini2008complete} we can write
	$$F \,=\, B_{1}E_{1} \,+\, B_{2}E_{2}$$  
	where $B_{1}, B_{2}$ are homogeneous polynomials of degree $k$. Repeated applications of \cite[Theorem 5.1]{carlini2008complete} for $E_{1}, E_{2}$ will give the following
	$$F = B_1(G_1H_1+G_2H_2)+B_2(Q_1P_1+Q_2P_2)$$ where the $G_{i}$, $H_{i}$, and $P_i, Q_i$ are homogeneous polynomials of degree $k$. Using the Veronese embedding $|\cO_{\bP^2}(k)|\,:\,\bP^2 \,\longrightarrow\, \bP^N$, the polynomial $F$ can be expressed as a cubic polynomial $F'$ in the Veronese coordinates of $\bP^N$.  
	In this special case both $B_1(G_1H_1\,+\,G_2H_2)$ and $B_2(Q_1P_1\,+\,Q_2P_2)$ will have a matrix factorization of size $2 \,\cdot\, 1 \,\cdot 1\,=\,2$ by Remark \ref{keymfremark}. Thus, we will have a matrix factorization of size $3 \cdot 2 \cdot 2=12$ for $F'$ and of size $3\cdot 1 \cdot 12=36$ for $t^3\,+\,F'$ by Remark \ref{keymfremark}. Hence, the result of  \S \ref{three}   alone will show the existence of an \ul bundle of rank $\frac{36}{3}\,=\,12$.
	
	 In \S \ref{five}, we will see that the matrix factorization techniques alone will not give the best possible estimation of the rank of an \ul bundle on $X$. Rather, a combination of the matrix factorization techniques and Theorem \ref{extensionthm} will help us to minimize the rank of the \ul bundle on $X$.  In \S \ref{five}, we will prove that the rank can be reduced to $6$ in this case; see Corollary \ref{d=3}.
\end{example}

\section{On the construction of a relatively Ulrich bundle on an ambient variety from a relatively Ulrich bundle on a complete intersection subvariety}
\label{four}
 
In this section, we will define the notion of a relatively \ul bundle with respect to a covering map between varieties. Then, under a certain framework, we will prove that the existence of a relatively \ul bundle on a subvariety will give rise to a relatively \ul bundle on the ambient variety. This will provide an alternative approach to prove the existence of an \ul bundle for cyclic coverings of $\bP^n$ under the assumption that a relatively \ul bundle exists for cyclic coverings of complete intersection curves. The aim of the present and the next section is to obtain a  better estimation of the rank. We will start with the following definition. 

\begin{definition}\label{relulbun}
	Let $\pi:Z \longrightarrow W$ be a finite covering of degree $d$ between smooth projective varieties. A vector bundle $F$ on $Z$ of rank $r$ is called a \textbf{relatively \ul bundle} for the covering $\pi$ if $\pi_*F$ is isomorphic with $\cO_W^{\oplus d \cdot r}$. 
\end{definition}
Let $\pi:Z \longrightarrow W$ be an arbitrary finite covering map between smooth projective varieties. Let $\cO_W(1)$ be an ample line bundle on $W$. For a coherent sheaf $K_1$ on $Z$, we have the natural short exact sequence
\begin{equation}
\begin{tikzcd}
	0 & M_1 & \pi^*\pi_*K_1 & K_1 & 0
	\arrow[from=1-1, to=1-2]
	\arrow[from=1-2, to=1-3]
	\arrow["ev", from=1-3, to=1-4]
	\arrow[from=1-4, to=1-5]
\end{tikzcd}
\end{equation}
where $M_1$ be the kernel of the evaluation map $ev:\pi^*\pi_*K_1 \longrightarrow K_1$. Tensoring with $\pi^*\cO_{W}(l)$ we get the following
\begin{equation}
	\begin{tikzcd}
		\label{evaluationSES}
	0 & M_1 \otimes \pi^*\cO_{W}(l) & \pi^*\pi_*K_1 \otimes \pi^*\cO_{W}(l) & K_1 \otimes \pi^*\cO_{W}(l) & 0
	\arrow[from=1-1, to=1-2]
	\arrow[from=1-2, to=1-3]
	\arrow["ev \otimes \id", from=1-3, to=1-4]
	\arrow[from=1-4, to=1-5]
\end{tikzcd}
\end{equation}

\begin{remark}
	\label{evaluationlemma}
	It can be checked that the map $ev \otimes \id$ in \eqref{evaluationSES} is the evaluation map $ev:\pi^*\pi_*(K_1 \otimes \pi^*\cO_{W}(l)) \longrightarrow K_1 \otimes \pi^*\cO_{W}(l)$ in the sense that the following diagram commutes
	\[\begin{tikzcd}
		{\pi^*\pi_*(K_1 \otimes \pi^*\cO_{W}(l))} && {K_1 \otimes \pi^*\cO_{W}(l)} \\
		{\pi^*\pi_*K_1 \otimes \pi^*\cO_{W}(l)} && {K_1 \otimes \pi^*\cO_{W}(l)}
		\arrow["ev", from=1-1, to=1-3]
		\arrow["{\cong(\text{projection formula})}", from=1-1, to=2-1]
		\arrow["id", from=1-3, to=2-3]
		\arrow["ev \otimes \id", from=2-1, to=2-3]
	\end{tikzcd}\]
\end{remark}

The following is the main theorem of this section. 
 \begin{theorem}
 	\label{generalextensionthm}
 	Let $\pi\,:\, X \,\longrightarrow\, Y$ be a cyclic covering of degree $d$ of smooth projective varieties. Then we will have $\pi_* \cO_X \,\cong\, \cO_Y \,\oplus\, L^{-1} \,\oplus\, \cdots \,\oplus\, L^{-d+1}$ for some line bundle $L \,\in\, \text{Pic}(Y)$ such that the branch divisor $B \in |L^{\otimes d}|$. Let $W \,\in\, |L|$ be a smooth divisor such that $B \bigcap W$ is smooth(or equivalently, the intersection is transversal). We consider the following Cartesian diagram
 	\begin{center}
 		\begin{tikzcd}
 			{{Z}} && {{X}} \\
 			{W} && {Y}
 			\arrow[hook, from=2-1, to=2-3]
 			\arrow["\pi'", from=1-1, to=2-1]
 			\arrow["\pi"', from=1-3, to=2-3]
 			\arrow[hook, from=1-1, to=1-3]
 		\end{tikzcd}
 	\end{center}
 Then $\pi'\,:\, Z \,\longrightarrow\, W$ is a cyclic covering of degree $d$ of smooth projective varieties. Further we will assume $H^{1}(Y,\, L^{\otimes j})\,=\,0$ for all $j \,\in\, \bZ$. 
 	  If $F$ is a  rank $r$ \emph{relatively \ul bundle} on $Z$ with respect to $\pi'$, then there exists a \emph{relatively \ul bundle} $E$ of rank $d \cdot r$ on $X$ with respect to $\pi$.
 \end{theorem}
\begin{proof}
	\textbf{\underline{Step 1}}: \textbf{First modification} We consider the evaluation map
	$$\pi'^*\pi'_* F \longrightarrow F.$$
	Since, $\pi'_*F \,\cong\, \cO_W^{\oplus d \cdot r}$, we get the surjective morphism $\cO_Z^{\oplus d \cdot r} \longrightarrow F$. Composing with the structure map $\cO_X \longrightarrow \cO_Z$ we get the following
	$$\phi\,:\,\cO_X^{\oplus d \cdot r} \,\longrightarrow\, F.$$
	Let $K_1'$ be the kernel of $\phi$:
	\begin{equation}
		\begin{tikzcd}
			0 & {K_1'} && {\cO_X^{\oplus d \cdot r}} && F & 0
			\arrow[from=1-1, to=1-2]
			\arrow[from=1-2, to=1-4]
			\arrow["\phi", from=1-4, to=1-6]
			\arrow[from=1-6, to=1-7].
		\end{tikzcd}
	\end{equation} 
	Since $\pi$ is a finite map, for any coherent sheaf $G$ on $X$, $R^i \pi_* G =0$ for all $i >0$. Thus applying $\pi_*$, and using the identification $\pi_*\cO_X \cong \cO_Y \oplus L^{-1} \oplus \cdots \oplus L^{-d+1}$, we get the following short exact sequence 
	\begin{equation}
		\label{firstmod}
		\begin{tikzcd}
			0 & {\pi_*K_1'} & {\bigg(\cO_Y \oplus L^{-1} \oplus \cdots \oplus L^{-d+1}\bigg)^{\oplus d \cdot r} } & {\cO_W^{\oplus d \cdot r}} & 0
			\arrow[from=1-1, to=1-2]
			\arrow[from=1-2, to=1-3]
			\arrow["{\pi_*\phi}", from=1-3, to=1-4]
			\arrow[from=1-4, to=1-5]
		\end{tikzcd}
	\end{equation}
	
	\textbf{\underline{Claim}:}	The subbundle ${\bigg(L^{-1} \oplus \cdots \oplus L^{-d+1}\bigg)^{\oplus d \cdot r} }$ under the map $\pi_*\phi$ will map to zero.\\
	\textbf{proof of the claim:} We consider the following commutative diagram
	\[\begin{tikzcd}
		{\pi^*\pi_*(\cO_X^{\oplus d \cdot r})} & {\cO_X^{\oplus d \cdot r}} \\
		& F
		\arrow["ev", from=1-1, to=1-2]
		\arrow["{\phi \circ ev}"', from=1-1, to=2-2]
		\arrow["\phi", from=1-2, to=2-2]
	\end{tikzcd}\]
	The map $\phi \circ ev \in Hom({\pi^*\pi_*(\cO_X^{\oplus d \cdot r})}, F)$ induces the map $\pi_* \phi  \in Hom(\pi_*\cO_X^{\oplus d \cdot r}, {\cO_W^{\oplus d \cdot r}})$ via the adjoint isomorphism
	$$ Hom({\pi^*\pi_*(\cO_X^{\oplus d \cdot r})},\, F) \,\cong\, Hom(\pi_*\cO_X^{\oplus d \cdot r},\, \pi_*F) \,\cong\, Hom(\pi_*\cO_X^{\oplus d \cdot r},\, {\cO_W^{\oplus d \cdot r}}).$$
	Let $M$ be the kernel of the map $ev\,:\, {\pi^*\pi_*(\cO_X^{\oplus d \cdot r})} \,\longrightarrow\, {\cO_X^{\oplus d \cdot r}}$:
	\begin{equation} \label{neweq}
		0 \,\longrightarrow\, M \,\longrightarrow\, {\pi^*\pi_*(\cO_X^{\oplus d \cdot r})} \,\longrightarrow\, {\cO_X^{\oplus d \cdot r}} \longrightarrow 0.
	\end{equation} 
	Since $H^0(ev)$ induces a surjective map on the zeroth cohomology, and both sides have $d \cdot r$ sections, $H^0(ev)$ is an isomorphism. We have $$H^1(X,\, {\pi^*\pi_*(\cO_X^{\oplus d \cdot r})})\,=\,H^1(Y,\, \pi_*{\pi^*\pi_*(\cO_X^{\oplus d \cdot r})})\,=\,H^1(Y,\, \pi_*(\cO_X^{\oplus d \cdot r}) \otimes \pi_*\cO_X)\,=\,0,$$ since $H^1(Y,\, L^{\otimes j})\,=\,0$ for all $j \,\in\, \bZ$, by assumption. Thus, taking the long exact sequence in cohomology for the short exact sequence \eqref{neweq}, we get $H^1(X, \, M)\,=\,0$. Thus, we get $Ext^1({\cO_X^{\oplus d \cdot r}},\, M)\,=\,H^1(X,\, M)^{\oplus d \cdot r} \,=\,0$. Hence $M\,=\,\pi^*{\bigg( L^{-1} \oplus \cdots \oplus L^{-d+1}\bigg)^{\oplus d \cdot r} }$. This proves the claim.\\ 
	
	Since the ideal sheaf of $W$ is $L^{-1}$, from \eqref{firstmod} it follows that $$\pi_{*}K_1' \,\cong\, (L^{-1})^{\oplus 2 d \cdot r} \,\oplus\, {\bigg(L^{-2} \,\oplus\, \cdots \,\oplus\, L^{-d+1}\bigg)^{\oplus d \cdot r}}.$$ 
	Using the projection formula, we get $$\pi_{*}(K_1' \otimes \pi^{*} L) \,\cong\, \cO_Y^{\oplus 2 d \cdot r} \,\oplus\, {\bigg( L^{-1} \oplus \cdots \oplus L^{-d+2}\bigg)^{\oplus d \cdot r}}.$$

	\textbf{\underline{Step2}}:\; \textbf{Second modification} Let $K_1$ be the vector bundle $K_1' \otimes \pi^* L$. We will show that there exists a surjective morphism $K_1 \,\longrightarrow\, F^{\oplus 2}$ with kernel $K_2'$ such that pushforward is given by  $$\pi_*K_2'\,\cong\,(L^{-1})^{\oplus 3 d \cdot r} \,\oplus\, {\bigg( L^{-2} \oplus \cdots \oplus L^{-d+2}\bigg)^{\oplus d \cdot r}}.$$
	
	Let $M_1$ be the kernel of the evaluation map $ev\,:\,\pi^*\pi_*K_1 \,\longrightarrow\, K_1$ which fits into the following short exact sequence
	\begin{equation}
		\label{secondmod}
		\begin{tikzcd}
			0 & M_1 & \pi^*\pi_*K_1 & K_1 & 0
			\arrow[from=1-1, to=1-2]
			\arrow[from=1-2, to=1-3]
			\arrow["ev", from=1-3, to=1-4]
			\arrow[from=1-4, to=1-5].
		\end{tikzcd}
	\end{equation}
	 We will show that $H^1(X, \, M_1 \otimes \pi^*L^{\otimes j})=0$ for all $j \,\in\, \bZ$ as follows. Tensoring with $\pi^*L^{\otimes j}$ we get the following
	 \begin{equation}  \label{newlyaddedeqn}
	\begin{tikzcd}
		0 & M_1 \otimes \pi^*L^{\otimes j} & \pi^*\pi_*K_1 \otimes \pi^*L^{\otimes j} & K_1 \otimes \pi^*L^{\otimes j} & 0
		\arrow[from=1-1, to=1-2]
		\arrow[from=1-2, to=1-3]
		\arrow["ev \otimes \id", from=1-3, to=1-4]
		\arrow[from=1-4, to=1-5].
	\end{tikzcd}
	\end{equation}
By Remark \ref{evaluationlemma}, $ev \otimes \id$ is the evaluation map $ev\,:\, \pi^*\pi_*(K_1 \otimes \pi^*L^{\otimes j}) \,\longrightarrow\,  (K_1 \otimes \pi^*L^{\otimes j})$.
	Thus the induced map $H^0(ev)\,:\,H^0(X, \, \pi^*\pi_*(K_1 \otimes \pi^*L^{\otimes j})) \,\longrightarrow\,  H^0(X, \, K_1 \otimes \pi^*L^{\otimes j})$ is surjective. Also from the assumption in Theorem \ref{generalextensionthm}, it follows $H^1(X, \, \pi^*\pi_*(K_1 \otimes \pi^*L^{\otimes j}))\,=\,0$. Taking the long exact sequence in cohomology for \eqref{newlyaddedeqn}, we get $H^1(X, \, M_1 \otimes \pi^*L^{\otimes j})\,=\,0$ for all $j \,\in\, \bZ$.\\
	
	Applying $\pi_*$ to the sequence \eqref{secondmod} we get
	\begin{equation}
		\label{thirdmod}
		\begin{tikzcd}
			0 & \pi_*M_1 & \pi_*\pi^*\pi_*K_1 & \pi_*K_1 & 0
			\arrow[from=1-1, to=1-2]
			\arrow[from=1-2, to=1-3]
			\arrow[from=1-3, to=1-4]
			\arrow[from=1-4, to=1-5]
		\end{tikzcd}
	\end{equation}
	From the above consideration, we get
	\footnotesize
	\begin{eqnarray}
		Ext^1(\pi_*K_1,\, \pi_*M_1)&=& Ext^1(\cO_Y^{\oplus 2 d \cdot r} \oplus {\bigg( L^{-1} \oplus \cdots \oplus L^{-d+2}\bigg)^{\oplus d \cdot r}},\, \pi_*M_1)\\
		&=& H^1(Y,\, \pi_*M_1)^{\oplus 2 d \cdot r} \oplus \bigg(\oplus_{j=1}^{(d-2)} H^1(Y,\, \pi_*(M_1 \otimes \pi^*L^{\otimes j}))\bigg)^{\oplus d \cdot r}\\
		&=&0
	\end{eqnarray}
\normalsize
	Thus, the sequence \eqref{thirdmod} splits. We fix a splitting isomorphism $\pi_*\pi^*\pi_*K_1 \,\cong\, \pi_*M_1 \,\oplus\, \pi_*K_1$. Let $f$ be the map $f\,:\,\pi_*K_1 \,\longrightarrow\,  \cO_W^{\oplus 2 d \cdot r}$ which sends $\cO_{Y}^{\oplus 2 d \cdot r}$ to $\cO_W^{\oplus 2 d \cdot r}$, and sends the rest of the subbundles to zero. Let $\tilde{f}\,:\,\pi_*\pi^*\pi_*K_1 \,\longrightarrow\, \cO_W^{\oplus 2 d \cdot r}$ be the surjective map such that $\tilde{f}_{|\pi_*M_1}\,=\,0$, and $\tilde{f}_{|\pi_*K_1}\,=\,f$. Due to the adjoint isomorphism $Hom(\pi_*K_1,\, \cO_W^{\oplus 2 d \cdot r}) \,\cong\, Hom(\pi^*\pi_*K_1,\, F^{\oplus 2})$, the map $f$ will give rise to a map $g\,:\,\pi^*\pi_*K_1 \,\longrightarrow\,  F^{\oplus 2}$. Composing with sequence \eqref{secondmod}, we get the following diagram
	\[\begin{tikzcd}
		M_1 & \pi^*\pi_*K_1 \\
		& F^{\oplus 2}
		\arrow[from=1-1, to=1-2]
		\arrow["h", from=1-1, to=2-2]
		\arrow["g", from=1-2, to=2-2]
	\end{tikzcd}\]
The map $h\,=\,0$ can be seen as follows.	Applying $\pi_*$ to this diagram, we get  
	\[\begin{tikzcd}
		\pi_*M_1 & \pi_*\pi^*\pi_*K_1 \\
		& \pi_*F^{\oplus 2}=\cO_W^{\oplus 2 d \cdot r}
		\arrow[from=1-1, to=1-2]
		\arrow["\pi_*h", from=1-1, to=2-2]
		\arrow["\pi_*g", from=1-2, to=2-2]
	\end{tikzcd}\]
	Since the evaluation map $\pi^*\pi_*\pi^*\pi_*K_1 \,\longrightarrow\, \pi^*\pi_*K_1$ splits, from the functoriality of the adjoint isomorphism $Hom(\pi^*\pi_*\pi^*\pi_*K_1,\, F^{\oplus 2}) \,\cong\, Hom(\pi_*\pi^*\pi_*K_1,\, \cO_W^{\oplus 2 d \cdot r})$, it follows that $\pi_*g\,=\,\tilde{f}$. This implies that $\pi_*h\,=\,0$. Since $\pi$ is a finite map, $h\,=\,0$. From the sequence \eqref{secondmod}, $g$ will induce a map $\psi\,:\,K_1 \,\longrightarrow\, F^{\oplus 2}$. Again from the functoriality of the construction, it follows that $\pi_*\psi$ is the map $f\,:\,\pi_*K_1 \,\longrightarrow\,  \cO_W^{\oplus 2 d \cdot r}$ which sends $\cO_{Y}^{\oplus 2 d \cdot r}$ to $\cO_W^{\oplus 2 d \cdot r}$, and the remaining summands ${\bigg( L^{-1} \oplus \cdots \oplus L^{-d+2}\bigg)^{\oplus d \cdot r}}$ to zero. Let $K_2'$ be the kernel of $\psi$. Then it follows that $\pi_*K_2'\,\cong\,(L^{-1})^{\oplus 3 d \cdot r} \oplus {\bigg( L^{-2} \oplus \cdots \oplus L^{(-d+2)}\bigg)^{\oplus d \cdot r}}$.
	
	Let $K_2\,=\,K_2' \otimes \pi^*L$, and thus $\pi_*K_2\,\cong\,\cO_{Y}^{\oplus 3 d \cdot r} \oplus {\bigg( L^{-1} \oplus \cdots \oplus L^{(-d+3)}\bigg)}^{\oplus d \cdot r}$. 
	
	\textbf{\underline{Step 3}}:\; \textbf{Inductive step} We assume that there exists a vector bundle $K_i$ on $X$ such that 
	\begin{equation} \label{ind1}
		\pi_* K_i \,\cong\, \cO_Y^{\oplus (i+1)d \cdot r} \oplus \big(L^{-1} \oplus L^{-2} \oplus \cdots \oplus L^{(-d+i+1)}\big)^{\oplus d \cdot r}.
	\end{equation}
	
	Then we can construct a vector bundle $K_{i+1}$ on $X$ satisfying the following 
	$$\pi_* K_{i+1} \,\cong\, \cO_Y^{\oplus (i+2)d \cdot r} \oplus \big(L^{-1} \oplus L^{-2} \oplus \cdots \oplus L^{(-d+i+2)}\big)^{\oplus d \cdot r}.$$
	
	\begin{proof}[Proof of the Inductive step]  \let\qed\relax
		We will present the main steps in the construction of $K_{i+1}$. As the construction closely parallels that of Step 2, we refer the reader to Step 2 for certain technical details. We divide the proof into the following four steps.
		\begin{enumerate}
			\item [(1)] Let $M_i$ be kernel of the evaluation map $\text{ev}: \pi^* \pi_* K_i \longrightarrow K_i$:
			\begin{equation} \label{ind2}
				0 \longrightarrow M_i \longrightarrow \pi^* \pi_* K_i  \longrightarrow K_i \longrightarrow 0
			\end{equation}  
			Using arguments similar to the first paragraph of Step 2, it follows that 
			 \begin{equation} \label{ind3}
				H^1(X, \, M_i \otimes \pi^* L^{\otimes j})=0 \, \, \text{for all} \,\,  j \in \bZ.
			\end{equation} 
Applying  $\pi_*$ to \eqref{ind2}, we get the following short exact sequence			
			
			\begin{equation} \label{ind4}
		0 \longrightarrow \pi_* M_i \longrightarrow \pi_* \pi^* \pi_* K_i  \longrightarrow \pi_* K_i \longrightarrow 0
			\end{equation}
			Using \eqref{ind3}, we will get $Ext^1(\pi_* K_i, \, \pi_* M_i)=0$. Hence, the sequence \eqref{ind4} splits.

			\item [(2)]  Using the decomposition \eqref{ind1}, we define the map $f: \pi_*K_i \longrightarrow  \cO_W^{\oplus (i+1) d \cdot r}$ satisfying the  following two conditions 
			\begin{enumerate}
				\item [(i)] the restriction $f \vert \cO_Y^{\oplus (i+1) d \cdot r}$ is the natural surjective map $\cO_Y^{\oplus (i+1) d \cdot r}  \longrightarrow \cO_W^{\oplus (i+1) d \cdot r}$ induced by the closed immersion $W \longhookrightarrow Y$. 
				
				\item [(ii)] the restriction $f \vert  \big(L^{-1} \oplus L^{-2} \oplus \cdots \oplus L^{(-d+i+1)}\big)^{\oplus d \cdot r}\,=\, 0$.
			\end{enumerate}
			Using the adjoint isomorphism 
			$$\Hom (\pi^*\pi_* K_i, \, F^{\oplus (i+1)}) \cong \Hom (\pi_*K_i, \, \pi_* F^{\oplus (i+1)}) \cong \Hom (\pi_*K_i, \, \cO_W^{\oplus (i+1)d \cdot r}),$$
			we choose the morphism $g:\, \pi^*\pi_* K_i \longrightarrow \, F^{\oplus (i+1)}$ which maps to $f$. In the next step, we will prove that $M_i \subseteq \ker (g)$. 
			
			\item [(3)] We consider the following diagram
			\begin{equation} \label{ind5}
				\begin{tikzcd}
				0 & {\pi^*\pi_*M_i} && {\pi^*\pi_*\pi^*\pi_* K_i } && {\pi^*\pi_* K_i } & 0 \\
				&&&&& {F^{\oplus (i+1)}}
				\arrow[from=1-1, to=1-2]
				\arrow[from=1-2, to=1-4]
				\arrow["{\text{ev}}", from=1-4, to=1-6]
				\arrow["{g \circ\text{ev}}"', from=1-4, to=2-6]
				\arrow[from=1-6, to=1-7]
				\arrow["g", from=1-6, to=2-6]
			\end{tikzcd}
			\end{equation}
			
			From functoriality, the map $\pi_*(g)$ is given by the image of $g \circ \text{ev}$ under the adjoint isomorphism
			\begin{equation} \label{ind6}
				\Hom ({\pi^*\pi_*\pi^*\pi_* K_i } , \, {F^{\oplus (i+1)}}) \cong \Hom (\pi_*\pi^*\pi_* K_i , \, \cO_W^{\oplus (i+1)d \cdot r}).
			\end{equation}
			
			 From the splitting of the sequence \eqref{ind4}, it follows that the horizontal sequence in \eqref{ind5} splits. We fix a splitting isomorphism
			$$ {\pi^*\pi_*\pi^*\pi_* K_i } \cong {\pi^*\pi_*M_i} \oplus {\pi^*\pi_* K_i }.$$
			Under this splitting, the map $g \circ \text{ev}$ takes  ${\pi^*\pi_*M_i}$ to zero. Hence, from this splitting and the definition of $g$ in $(2)$,  it follows that $\pi_* (g): \pi_*\pi^*\pi_* K_i \longrightarrow \, \cO_W^{\oplus (i+1)d \cdot r}$ is precisely the map satisfying $\pi_* (g) \vert \pi_* K_i \, =\, f$ and $\pi_*(g) \vert \pi_* M_i=0$. Since $\pi$ is a finite map, it follows that $M_i \subseteq \ker (g)$. For more details, we refer to Step 2.
			
			\item [(4)] Hence, the map $g:\, \pi^*\pi_* K_i \longrightarrow \, F^{\oplus (i+1)}$ will induce the natural map 
			$$\varpi: K_i \longrightarrow F^{\oplus (i+1)}.$$
			From the construction of $\varpi$, it follows that $\pi_* \varpi \, =\, f$. Let $K_{i+1}'$ be the kernel of $\varpi$:
			\begin{equation}
				0 \longrightarrow K_{i+1}' \longrightarrow K_i \longrightarrow F^{\oplus (i+1)} \longrightarrow 0
			\end{equation}
			From the definition of $f$ in $(2)$, we get 
			\begin{equation}
				\pi_* K_{i+1}' \cong (L^{-1})^{\oplus (i+1) d \cdot r} \oplus \big(L^{-1} \oplus L^{-2} \oplus \cdots \oplus L^{(-d+i+1)}\big)^{\oplus d \cdot r}\
			\end{equation}
			Let $K_{i+1}$ be the vector bundle $K_{i+1}' \otimes \pi^* L$. Using the projection formula, we get
			\begin{equation}
				\pi_* K_{i+1} \cong \cO_Y^{\oplus (i+2) d \cdot r} \oplus \big(L^{-1} \oplus L^{-2} \oplus \cdots \oplus L^{(-d+i+2)}\big)^{\oplus d \cdot r}\
			\end{equation}
		\end{enumerate}
		\end{proof}
	
	\textbf{\underline{Step 4}}:\; \textbf{Conclusion} We have proved in Step $1$ and Step $2$ that the assumption of the inductive step satisfies for $i=1, \, 2$.  Hence, by the above inductive argument, we can construct a sequence of vector bundles $\{K_i\}_{i=0}^{(d-2)}$ which fits in the short exact sequence $0 \longrightarrow K_{i+1}' \longrightarrow K_i \longrightarrow F^{\oplus (i+1)} \longrightarrow 0$ such that  
	$$\pi_*K_{i+1}'\, \cong\,(L^{-1})^{\oplus (i+2) d \cdot r} \oplus {\bigg( L^{-2} \oplus \cdots \oplus L^{(-d+i+1)}\bigg)^{\oplus d \cdot r}}, \, \, \text{and} \, \, K_{i+1} :\,=\, K_{i+1}' \otimes \pi^* L, $$
	where the initial vector bundle $K_0\,=\,\cO_X^{\oplus d \cdot r}$.
	Thus we get $\pi_*K_{d-1}'\,\cong\,(L^{-1})^{\oplus d \cdot d \cdot r}$. Let $E$ be the vector bundle $K_{d-1}' \otimes \pi^* L$. Then the vector bundle $E$ of rank $d \cdot r$ has the desired property.
\end{proof}

\begin{remark}
	\label{extensionthm}
	The idea is to reduce the existence of \ul bundles to the existence of relatively \ul bundles for coverings of plane curves. Let $\pi\,:\,X \,\longrightarrow\, \bP^2$ be a cyclic covering of degree $d$. The branch locus $B$ of $\pi$ is a smooth projective plane curve of degree $d \cdot k$ for some $k \in \bN$. We have $\pi_*\cO_X \,\cong\, \cO_{\bP^2} \oplus \cO_{\bP^2}(-k) \oplus \cO_{\bP^2}(-2k) \oplus \cdots \oplus \cO_{\bP^2}(-(d-1)k)$. Let $D$ be a smooth plane curve of degree $k$ that intersects $B$ transversally, and thus the scheme structure on $B \cap D$ is reduced. We consider the following Cartesian diagram:
	\begin{center}
		\begin{tikzcd}
			{{C}} && {{X}} \\
			{D} && {\mathbb{P}^2}
			\arrow[hook, from=2-1, to=2-3]
			\arrow["\pi'", from=1-1, to=2-1]
			\arrow["\pi"', from=1-3, to=2-3]
			\arrow[hook, from=1-1, to=1-3]
		\end{tikzcd}
	\end{center}
The restriction map $\pi': C \longrightarrow D$ is a cyclic covering of degree $d$. Since $B \cap D$ is reduced, it implies $C$ is a smooth projective curve contained in $X$. The branch locus $B'=B \cap D$ of $\pi'$ has $d \cdot k \cdot \text{deg}(D)=dk^2$ many distinct points. Since $H^1(\bP^2, \,\cO_{\bP^2}(j))=0$ for all $j \in \bZ$, the assumptions in Theorem \ref{generalextensionthm} are satisfied. Thus, if $F$ is a relatively \ul bundle on $C$ of rank $r$, then there exists a vector bundle $E$ on $X$ of rank $d \cdot r$ such that $$\pi_*E \,\cong\, \cO_{\bP^2}^{\oplus (d^2 \cdot r)}.$$\
\end{remark}\

We can generalize Theorem \ref{generalextensionthm} for a sequence of complete intersection subvarieties of $\bP^n$. This will provide an alternative approach to prove the existence of an \ul bundle for cyclic coverings of $\bP^n$ as follows.  Let $\pi:X \longrightarrow \bP^n$ be a degree $d$ cyclic covering with branch divisor $B$, which is a smooth hypersurface of degree $d \cdot k$. We will construct certain complete intersection subvarieties $W_{i}$ of $\bP^n$ for $i=0, 1, \cdots, (n-1)$ as follows. We have $\pi_*\cO_X \cong \cO_{\bP^n} \oplus \cO_{\bP^n}(-k) \oplus \cdots \oplus \cO_{\bP^n}(-k(d-1))$. We will assume $W_0=\bP^n$. Let $W_{1}$ be a smooth hypersurface of $\bP^n$ of degree $k$ such that its intersection with the branch divisor $B$ is smooth. Thus the restriction $\pi_1:Z_{1}=\pi^{-1}(W_1) \longrightarrow W_{1}$ is a degree $d$ cyclic covering of smooth varieties. Let $\cO_{W_1}(1)$ be the restriction $\cO_{\bP^{n}}(1)_{|W_{1}}$. Then we will have $\pi_*\cO_{Z_1} \cong \cO_{W_1} \oplus \cO_{W_1}(-k) \oplus \cdots \oplus \cO_{W_1}(-k(d-1))$. Next step is to pick a smooth divisor $W_{2} \in |\cO_{W_1}(k)|$ which intersects the branch divisor of $\pi_1$ smoothly, and we define $Z_2=(\pi_1)^{-1}(W_2)$. By continuing this inductive procedure, we will get a complete intersection curve $W_{n-1}$. The curve $W_{n-1}$ is a complete intersection of type $(k, k, \cdots, k)$. For all $i=0,1, \cdots, (n-1)$ we will have $\text{dim}(W_{i-1})=\text{dim}(W_{i})+1$. As a result of the inductive construction, we will get the following diagram, with each square being a Cartesian

\begin{equation}
\begin{tikzcd}
	{Z_{n-1}} && {Z_{n-2}} && \cdots && {Z_1} && Z_{0}=X \\
	\\
	{W_{n-1}} && {W_{n-2}} && \cdots && {W_1} && W_{0}={\bP^n}
	\arrow[hook, from=3-7, to=3-9]
	\arrow["{\pi_1}", from=1-7, to=3-7]
	\arrow["{\pi_{n-2}}", from=1-3, to=3-3]
	\arrow["{\pi_{n-1}}", from=1-1, to=3-1]
	\arrow[hook, from=3-1, to=3-3]
	\arrow[hook, from=1-1, to=1-3]
	\arrow[hook, from=1-7, to=1-9]
	\arrow["\pi", from=1-9, to=3-9]
	\arrow[dashed, hook, from=1-5, to=1-7]
	\arrow[dashed, hook, from=1-3, to=1-5]
	\arrow[dashed, hook, from=3-3, to=3-5]
	\arrow[dashed, hook, from=3-5, to=3-7]
\end{tikzcd}
\end{equation}
\begin{theorem}\label{completeintthm}
 Let $\pi:X \longrightarrow \bP^n$ be a degree $d$ cyclic covering with branch divisor $B$, which is a smooth hypersurface of degree $d \cdot k$. Let $Z_i$ and $W_i$ are the subvarieties in the above construction, and $\pi_i:Z_i \longrightarrow W_i$ are the degree $d$ cyclic coverings for $i=0, 1, \cdots, (n-1)$. If $F$ is a relatively \ul bundle on $Z_{n-1}$ for $\pi_{n-1}$ of rank $r$, then we can construct an \ul vector bundle $E$ on $X$ of rank $r \cdot d^{(n-1)}$.
\end{theorem}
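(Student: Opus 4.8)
The plan is to prove the statement by descending induction along the tower, applying \Cref{generalextensionthm} once at each level. Concretely, I would show by descending induction on $i\in\{n-1,n-2,\ldots,0\}$ that $Z_i$ carries a relatively \ul bundle $E_i$ of rank $d^{(n-1)-i}\cdot r$ with respect to $\pi_i:Z_i\rightarrow W_i$, the base case being $E_{n-1}=F$. The inductive step, passing from $E_i$ on $Z_i$ to $E_{i-1}$ on $Z_{i-1}$, is precisely an instance of \Cref{generalextensionthm} applied to the cyclic covering $\pi_{i-1}:Z_{i-1}\rightarrow W_{i-1}$: here the smooth divisor $W\in|L|$ of the theorem is $W_i\in|\cO_{W_{i-1}}(k)|$ with $L=\cO_{W_{i-1}}(k)$, the fibre product $Z$ is $Z_i=\pi_{i-1}^{-1}(W_i)$, and the input relatively \ul bundle is $E_i$. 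The theorem then produces a relatively \ul bundle $E_{i-1}$ on $Z_{i-1}$ whose rank is $d$ times that of $E_i$, namely $d^{(n-1)-(i-1)}\cdot r$.

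To legitimise each application I would verify the hypotheses of \Cref{generalextensionthm} level by level. The geometric hypotheses hold by the construction preceding the statement: each $W_i$ is chosen smooth and meeting the branch locus of $\pi_{i-1}$ transversally, so $Z_i$ is smooth and $\pi_i$ is again a degree $d$ cyclic covering of smooth projective varieties. Moreover the pushforward formula of \S\ref{cycliccovering} gives $\pi_{(i-1)*}\cO_{Z_{i-1}}\cong\bigoplus_{j=0}^{d-1}\cO_{W_{i-1}}(-jk)$, which is $\cO\oplus L^{-1}\oplus\cdots\oplus L^{-d+1}$ for $L=\cO_{W_{i-1}}(k)$. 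Thus the only thing left to check is the cohomological assumption $H^1(W_{i-1},L^{\otimes j})=0$ for all $j\in\bZ$.

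The cohomological vanishing is where the essential point lies, and I expect it to be the only real obstacle. In the inductive step $i$ ranges over $1,\ldots,n-1$, so the relevant bases $W_{i-1}$ are $W_0=\bP^n$ together with the complete intersections $W_1,\ldots,W_{n-2}$ of type $(k,\ldots,k)$, all of dimension $\ge 2$; note that the curve $W_{n-1}$ never occurs as a base, only as a divisor, for which no cohomological hypothesis is required. For $W_0=\bP^n$ the vanishing $H^1(\bP^n,\cO(jk))=0$ is classical. For $1\le i-1\le n-2$ I would invoke the standard fact that a complete intersection $V\subseteq\bP^n$ of dimension $m$ satisfies $H^q(V,\cO_V(l))=0$ for $0<q<m$ and all $l\in\bZ$; this is proved by induction on the codimension from the hypersurface-section sequences $0\rightarrow\cO_{V'}(l-k)\rightarrow\cO_{V'}(l)\rightarrow\cO_V(l)\rightarrow 0$, starting from the vanishing of intermediate cohomology on $\bP^n$. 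Taking $q=1$ and $m=\dim W_{i-1}\ge 2$ yields the required $H^1(W_{i-1},\cO_{W_{i-1}}(jk))=0$, so \Cref{generalextensionthm} applies at every level.

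Finally, unwinding the induction produces a relatively \ul bundle $E=E_0$ on $Z_0=X$ of rank $d^{n-1}\cdot r$ with respect to $\pi_0=\pi:X\rightarrow\bP^n$. By \Cref{relulbun} this means $\pi_*E\cong\cO_{\bP^n}^{\oplus\,d\cdot d^{n-1}r}=\cO_{\bP^n}^{\oplus\,d^{n}r}$, and since the polarisation on $X$ is $\pi^*\cO_{\bP^n}(1)$, criterion (D3) of \Cref{Eisenbuddefinition} identifies $E$ as an \ul bundle on $X$ of the asserted rank $r\cdot d^{(n-1)}$. The whole argument is therefore a mechanical iteration of \Cref{generalextensionthm}, the single substantive check being the complete-intersection vanishing of the preceding paragraph.
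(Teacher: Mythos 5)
Your proposal is correct and follows essentially the same route as the paper: iterate \Cref{generalextensionthm} down the tower, with the only substantive check being $H^{1}(W_{i},\cO_{W_{i}}(jk))=0$ for $i=0,\dots,n-2$, which the paper also establishes via the hypersurface-section sequences $0\rightarrow\cO_{W_i}(-k)\rightarrow\cO_{W_i}\rightarrow\cO_{W_{i+1}}\rightarrow 0$ and the vanishing of intermediate cohomology on $\bP^n$. No gaps.
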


\begin{proof}
	Using Theorem \ref{generalextensionthm} we can inductively use modifications along $F$ to construct an \ul bundle of the mentioned rank provided the cohomological vanishing assumptions are satisfied for $W_{(n-2)}, W_{(n-3)}, \cdots, W_{0}$, where $\text{dim}\,(W_{(n-2)})=2$. For all $i$ we have 
	$$\pi_*{Z_i} \,\cong\, \cO_{W_i} \oplus \cO_{W_i}(-k) \oplus \cdots \oplus \cO_{W_i}(-k(d-1))$$ where inductively $\cO_{W_{i}}(1)$ is the restriction $\cO_{W_{(i-1)}}(1)_{|W_{i}}$ for all $i$. Thus it is enough to show that $H^1(W_i, \, \cO_{W_i}(j))\,=\,0$ for all $i\,=\,0, 1, \cdots, n-2$, and for all $j \,\in\, \bZ$. When $n\,=\,2$, for $W_0\,=\,\bP^2$ we have the vanishing $H^{1}(\bP^2, \, \cO_{\bP^2}(j))\,=\,0$. So we will assume $n \,\ge\, 3$. We will have the short exact sequence
	$$0 \longrightarrow \cO_{\bP^{n}}(-k) \longrightarrow \cO_{\bP^{n}} \longrightarrow  \cO_{W_1}  \longrightarrow 0$$
	Thus taking long exact sequence in cohomology we get $$H^{1}(W_1, \, \cO_{W_1}(j))\,=\,\cdots\,=\,H^{n-2}(W_1,\,\cO_{W_1}(j))\,=\,0.$$ For the next step, we consider
	$$0 \longrightarrow \cO_{W_1} (-k) \longrightarrow \cO_{W_1}  \longrightarrow  \cO_{W_2}  \longrightarrow 0$$
	Thus taking long exact sequence in cohomology we get $$H^{1}(W_2,\,\cO_{W_2}(j))\,=\,\cdots\,=\,H^{n-3}(W_2,\,\cO_{W_2}(j))\,=\,0.$$ 
	In general, we consider
	$$0 \longrightarrow \cO_{W_i}(-k) \longrightarrow \cO_{W_i}  \longrightarrow  \cO_{W_{i+1}}  \longrightarrow 0$$
	Inductively we are given that $H^{1}(W_i,\,\cO_{W_i}(j))\,=\,\cdots=H^{n-i-1}(W_i,\,\cO_{W_i}(j))\,=\,0$. Thus we will get $H^{1}(W_{i+1},\,\cO_{W_{i+1}}(j))\,=\,\cdots\,=\,H^{n-i-2}(W_{i+1},\,\cO_{W_{i+1}}(j))\,=\,0$. Hence for the term $i+1=n-2$ we will get $$H^{1}(W_{n-2},\,\cO_{W_{(n-2)}}(j))\,=\,0.$$
	\end{proof}

\begin{remark}
	A necessary and sufficient criterion for the existence of relatively Ulrich sheaves for
	an admissible abelian Galois cover $\pi \,:\,X \longrightarrow Y$ of
	projective varieties has been established in \cite[Theorem 1.4]{BiswasPine2026108355}. As a corollary, it is shown that every smooth admissible abelian Galois covering of $\bP^n$ supports an Ulrich bundle; see \cite[Corollary 1.6]{BiswasPine2026108355}.
\end{remark}

\begin{remark}
	 For the existence of an \ul bundle for the cyclic covering $\pi:X \longrightarrow \bP^n$, we need not assume the smoothness of the intermediate coverings due to Proposition \ref{intermediateprop}. 
\end{remark}

\begin{proposition}
	\label{intermediateprop}
	Let $\pi\,:\,X \,\longrightarrow\, Y$ be a degree $d$ cyclic covering of smooth varieties. Let $E$ be a coherent sheaf such that $\pi_*E$ is a vector bundle. Then $E$ is a vector bundle on $X$.
\end{proposition}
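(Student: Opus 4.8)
The plan is to reduce the statement to a stalkwise assertion in commutative algebra and to settle it with the Auslander--Buchsbaum formula. Fix a point $x \in X$, set $y = \pi(x)$, and write $R = \mathcal{O}_{X,x}$ and $A = \mathcal{O}_{Y,y}$. Since $X$ and $Y$ are smooth, $R$ and $A$ are regular local rings; and because $\pi$ is a finite surjective morphism of equidimensional varieties (it is closed with finite fibres, so $\dim \overline{\{x\}} = \dim \overline{\{y\}}$, and equidimensionality gives the rest) the two local rings have the same dimension, $\dim R = \dim A =: n$. The goal is to prove that the stalk $E_x$ is a free $R$-module; once this holds for every $x \in X$, the coherent sheaf $E$ has free stalks everywhere and is therefore locally free, i.e. a vector bundle. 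By Auslander--Buchsbaum over the regular local ring $R$ one has $E_x$ free $\iff \operatorname{pd}_R E_x = 0 \iff \operatorname{depth}_R E_x = \operatorname{depth} R = n$, that is, $E_x$ is a maximal Cohen--Macaulay $R$-module.

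Next I would transport the depth computation to the base through $\pi$. Working in an affine neighbourhood, let $B = (\pi_*\mathcal{O}_X)_y$, a ring finite over $A$ whose maximal ideals are exactly the points $x_1,\dots,x_r$ of the fibre $\pi^{-1}(y)$, with localizations $R_i = \mathcal{O}_{X,x_i}$, and set $N = (\pi_*E)_y$, a finite $B$-module. By hypothesis $\pi_*E$ is locally free, so $N$ is a free $A$-module, and hence $\operatorname{depth}_A N = \operatorname{depth} A = n$. The crux is the invariance of depth along the finite extension $A \to B$: since the extension is finite and local-on-the-base, every maximal ideal of $B$ lies over $\mathfrak{m}_A$ and $B/\mathfrak{m}_A B$ is Artinian, so $\sqrt{\mathfrak{m}_A B} = \operatorname{Jac}(B)$. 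Computing depth via local cohomology and using independence of the base, $H^i_{\mathfrak{m}_A}(N) \cong H^i_{\mathfrak{m}_A B}(N) = H^i_{\operatorname{Jac}(B)}(N)$, so that $\operatorname{depth}_A N = \operatorname{depth}_B N = \min_i \operatorname{depth}_{R_i} E_{x_i}$. If one prefers to avoid the semilocal bookkeeping, pass to the $\mathfrak{m}_A$-adic completion, where $\widehat{B} \cong \prod_i \widehat{R_i}$ and $\widehat{N} \cong \bigoplus_i \widehat{E_{x_i}}$ and depth is preserved.

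Combining the two steps concludes the argument: $\min_i \operatorname{depth}_{R_i} E_{x_i} = n$, while each term satisfies $\operatorname{depth}_{R_i} E_{x_i} \le \dim R_i = n$; hence every $E_{x_i}$ has depth equal to $n$, is maximal Cohen--Macaulay, and---being a module over the regular local ring $R_i$---is free by Auslander--Buchsbaum. As $x$ (hence $y$) was arbitrary, $E_x$ is free at every point of $X$, and therefore $E$ is a vector bundle.

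The main obstacle is precisely the depth-invariance step: one must argue carefully that depth is insensitive to the finite extension $A \to B$, and correctly handle the possibility that the fibre consists of several points, so that $B$ is only semilocal and $N$ need not decompose as $\bigoplus_i E_{x_i}$ before completion. The cleanest way through is the local-cohomology identity above together with the completion remark. Everything else---the dimension equality $\dim R = \dim A$, the regularity of the local rings, and the Auslander--Buchsbaum dichotomy between freeness and maximal Cohen--Macaulayness---is routine given the smoothness hypotheses.
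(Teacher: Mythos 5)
Your argument is correct and follows essentially the same route as the paper: decompose the (completed) stalk of $\pi_*E$ over the fibre $\pi^{-1}(y)$, use invariance of depth under the finite extension to identify $\operatorname{depth}_A(\pi_*E)_y$ with the minimum of the depths of the $E_{x_i}$, and conclude freeness from the Auslander--Buchsbaum formula over the regular local rings $\mathcal{O}_{X,x_i}$. Your treatment of the semilocal bookkeeping (via local cohomology or completion) is in fact slightly more careful than the paper's, which simply asserts the completed decomposition, but the underlying idea is identical.
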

\begin{proof}
	Let $x=x_0$ be any point on $X$, and $y=\pi(x)$ be its image on $Y$. Let $\pi^{-1}(y)=\{x=x_0, x_1, x_2, \cdots x_k\}$ for some $k \le (d-1)$. Let $\widehat{\cO_{Y, y}} \longrightarrow \widehat{\cO_{X, x_i}}$ be the finite map of complete local rings induced by the structure map. Then we will have the following isomorphism
	\begin{equation}\label{localeq}
		\widehat{(\pi_*E)_{y}} \cong \oplus_{i=0}^{k} \widehat{E_{x_i}}
	\end{equation}
as an $\widehat{\cO_{Y, y}}$ module. Since $\pi_*E$ is locally free, we have $\text{depth}(\widehat{(\pi_*E)_{y}}) = \text{dim}(Y)=\text{dim}(X)$. From \eqref{localeq} we will get 
\begin{equation}
	\text{depth}(\widehat{(\pi_*E)_{y}}) = \text{min}_{i=0}^{k}\text{depth}(\widehat{E_{x_i}})
\end{equation}  
Since $\widehat{\cO_{Y, y}} \longrightarrow \widehat{\cO_{X, x_i}}$ is a finite map, $\text{depth}(\widehat{E_{x_i}})$ as an $\widehat{\cO_{X, x_i}}$ is same as when considered as an $\widehat{\cO_{Y, y}}$  module. Thus we will have  $$\text{depth}_{\widehat{\cO_{X, x_i}}}(\widehat{E_{x_i}}) \ge \text{min}_{i=0}^{k}\text{depth}(\widehat{E_{x_i}}) = \text{depth}(\widehat{(\pi_*E)_{y}})=\text{dim}(X)$$
Since $\text{depth}_{\widehat{\cO_{X, x_i}}}(\widehat{E_{x_i}})$ is always $\le \text{dim}(X)$, we will have $\text{depth}_{\widehat{\cO_{X, x_i}}}(\widehat{E_{x_i}}) = \text{dim}(X)$ for all $i=0, 1, \cdots, k$. In particular for any $x \in X$, we will get $\text{depth}_{\widehat{\cO_{X, x}}}(\widehat{E_{x}}) = \text{dim}(X)$. Since $X$ is smooth, $\text{p.d}(\widehat{E_{x}})$ is finite. From Auslander-Buchsbaum theorem we get $\text{p.d}(\widehat{E_{x}})=0$. Hence $E$ is a vector bundle.
\end{proof}

\begin{remark}
For Proposition \ref{intermediateprop}, we have assumed that $\pi$ is a finite cyclic covering. But the proof still works if $\pi$ is only assumed to be a finite covering.
\end{remark}

\section{\ul bundles on cyclic coverings of $\bP^2$}
\label{five}
In this section, we consider the more specific case of cyclic coverings $X$ of the projective plane $\bP^2$ of arbitrary degree $d$. In \S 3  we have proved the existence of \ul bundle on $X$ by methods involving Veronese embedding, and proving the existence of \ul sheaf on certain hypersurface using matrix factorization of polynomials. By using methods developed in \S 4, in this section we will give a different proof of the existence of \ul bundle by reducing the problem to showing the existence of relatively \ul bundles on the covering of plane curves. We will set up the following notations.\\

Let $\pi:X \longrightarrow \bP^2$ be a cyclic covering of degree $d$. The branch locus $B$ of $\pi$ is a smooth projective plane curve of degree $d \cdot k$ for some $k \in \bN$. Let $D$ be a smooth plane curve of degree $k$ which intersects $B$ transversally, and thus the scheme structure on $B \cap D$ is reduced. We consider the following Cartesian diagram
\begin{center}
	\begin{tikzcd}
		{{C}} && {{X}} \\
		{D} && {\mathbb{P}^2}
		\arrow[hook, from=2-1, to=2-3]
		\arrow["\pi'", from=1-1, to=2-1]
		\arrow["\pi"', from=1-3, to=2-3]
		\arrow[hook, from=1-1, to=1-3]
	\end{tikzcd}
\end{center}

The restriction map $\pi': C \longrightarrow D$ is a cyclic covering of degree $d$. Since $B \cap D$ is reduced, $C$ is a smooth projective curve contained in $X$. The branch locus $B'=B \cap D$ of $\pi'$ has $d \cdot k \cdot \text{deg}(D)=d \cdot k^2$ many distinct points.\\

Let $d_1 \le d_2 < d$ be three natural numbers. Let $X_1\, =\, $ $\{F \in H^0(\bP^2,\,\cO_{\bP^2}(d)): F\,=\,F_1 \cdot G_1, \text{deg}(F_1)\,=\,d_1\}$ and let $X_2\, =\, $  $\{F \in H^0(\bP^2,\,\cO_{\bP^2}(d)):\, F\,=\,F_2 \cdot G_2, \text{deg}(F_2)\,=\,d_2\}$. Then the main result of \cite[Theorem 5.1]{carlini2008complete} applied to $\bP^2$ states that the join of $X_1$ and $X_2$ is $\bP(H^0(\bP^2,\,\cO_{\bP^2}(d)))$. In other words, every degree $d$ form $F$ can be written as $F\,=\,F_1G_1\,+\,F_2G_2$, where $F_1$ is of degree $d_1$, and $F_2$ is of degree $d_2$. Further, we prove the following

\begin{proposition}
\label{smoothnesstrans}
A generic degree $d$ form can be written as $F\,=\,F_1G_1\,+\,F_2G_2$ such that $F_1$ defines a smooth curve, and $F_1$ is transversal to both $F_2$ and $G_2$. 
\end{proposition}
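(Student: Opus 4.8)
The plan is to realize all decompositions $F = F_1G_1 + F_2G_2$ as the image of a single morphism of affine spaces and then carve out the desired genericity by an open condition. Writing $V_m = H^0(\bP^2, \cO_{\bP^2}(m))$, consider the polynomial map
\[
\Phi : V_{d_1} \times V_{d-d_1} \times V_{d_2} \times V_{d-d_2} \longrightarrow V_d, \qquad (F_1, G_1, F_2, G_2) \longmapsto F_1 G_1 + F_2 G_2 .
\]
By \cite[Theorem 5.1]{carlini2008complete}, recalled above as the statement that the join of $X_1$ and $X_2$ fills all of $\bP(V_d)$ (equivalently, a generic form is $F_1G_1+F_2G_2$), the morphism $\Phi$ is dominant. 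The source $S$ is an affine space, hence irreducible, so any nonempty Zariski-open subset of $S$ is dense.

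Next I would introduce the ``good locus'' $U \subseteq S$ of tuples $(F_1, G_1, F_2, G_2)$ for which (i) $\{F_1 = 0\}$ is smooth, (ii) $\{F_1 = 0\}$ meets $\{F_2 = 0\}$ transversally, and (iii) $\{F_1 = 0\}$ meets $\{G_2 = 0\}$ transversally. Each is a Zariski-open condition: (i) is nonvanishing of the discriminant of $F_1$, while (ii) and (iii) amount to the finite schemes $\{F_1=F_2=0\}$ and $\{F_1=G_2=0\}$ being reduced, an open condition on the relevant pairs. To see $U \neq \emptyset$, fix any smooth curve $F_1$ of degree $d_1$ (a line when $d_1=1$, a Fermat curve in general); since $|\cO_{\bP^2}(d_2)|$ and $|\cO_{\bP^2}(d-d_2)|$ are base-point-free, restricting them to the fixed smooth curve $\{F_1=0\}$ gives base-point-free systems whose general members are reduced divisors, so by Bertini a general $F_2$ (resp.\ $G_2$) meets $\{F_1=0\}$ transversally; the factor $G_1$ is unconstrained. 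Thus $U$ is a nonempty open, hence dense, subset of $S$.

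It then remains to show that a generic $F$ admits a decomposition coming from $U$, i.e.\ that $\Phi(U)$ is dense in $V_d$. Since $U$ is dense we have $\overline{U}=S$, and continuity of $\Phi$ gives $\Phi(S)=\Phi(\overline{U}) \subseteq \overline{\Phi(U)}$; combined with dominance of $\Phi$ this yields $V_d = \overline{\Phi(S)} \subseteq \overline{\Phi(U)}$, so $\Phi|_U$ is again dominant. By Chevalley's theorem $\Phi(U)$ is constructible and dense, hence contains a dense open $V \subseteq V_d$. For any $F \in V$ one may pick a preimage $(F_1, G_1, F_2, G_2) \in U$, which is exactly a decomposition $F = F_1 G_1 + F_2 G_2$ with $F_1$ smooth and transversal to both $F_2$ and $G_2$.

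The argument is essentially formal once dominance of $\Phi$ is granted, so the only genuine point to verify is the nonemptiness of $U$, namely that smoothness of $F_1$ can be imposed simultaneously with transversality against both $F_2$ and $G_2$; this is the main (and relatively mild) obstacle, resolved by the Bertini argument above. One should also record the degenerate low-degree bookkeeping, assuming $d_1,d_2 \ge 1$ so that ``smooth curve'' and ``transversal'' are meaningful (the case $d_1=1$ being automatic for smoothness).
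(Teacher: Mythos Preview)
Your proof is correct and takes a cleaner route than the paper's. The paper argues on the ``bad locus'' side: after restricting to smooth $F_1$, it shows that the set of pairs $(F_1,F_2)$ which fail to meet transversally is a proper closed subscheme by a dual-variety argument---it passes through the Veronese embedding $\phi_{d_2}$, notes that $\phi_{d_2}(F_t)^\vee \subseteq (\bP^N)^\vee$ is a hypersurface of fixed degree, and realizes the non-transversal pairs as an incidence correspondence with this family of dual hypersurfaces; likewise for $(F_1,G_2)$. Only then does it invoke \cite[Theorem~5.1]{carlini2008complete} to conclude that the image of the complement is open. You instead work directly on the source: the decomposition map $\Phi$ is dominant by Carlini's result, the good locus $U$ is open by definition and nonempty by a one-line Bertini argument on a fixed smooth $F_1$, and Chevalley pushes $U$ forward to a constructible dense set containing a dense open. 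Your argument is shorter and uses only standard constructibility, while the paper's dual-variety computation yields more explicit information about the bad locus (its equations come from tangent hyperplanes), though that extra structure is never used downstream.
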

\begin{proof}
 Let $U$ be the open set containing smooth degree $d_1$ forms in $\bP H^0(\bP^2,\,\cO_{\bP^2}(d_1))$. Let $\mathcal{U}$ be the following dense open subset of $U \times \bP H^0(\bP^2,\,\cO_{\bP^2}(d_2) \times \bP H^0(\bP^2,\,\cO_{\bP^2}(d-d_2))$
 $$\cU \,=\,  \{(t,\,s,\,w)\,:\, Z(F_t) \bigcap Z(G_{s}) \bigcap Z(H_{w}) = \emptyset \}$$
 where the triple $(t,s,w)$ parameterizes degree $d_{1}$ form $F_{t}$, degree $d_{2}$ form $G_{s}$, and degree $(d-d_{2})$ form $H_{w}$ respectively. The openness of $\cU$ can be seen as follows. For any generic pair $(t, \, s)$, the forms $F_t$ and $G_s$ intersect in finitely many points. Then a generic $H_w$ will avoid these finitely many points. Similar arguments for any generic pair $(t, \, w)$ and $(s, \, w)$. 
 
 Let $\cC$ be the following set 
 $$\cC=\{(t,s,w)\in \cU: F_t \; \text{is not transversal to} \; G_s \cdot H_w  \} \subseteq U \times \bP H^0(\bP^2,\,\cO_{\bP^2}(d_2)) \times \bP H^0(\bP^2,\,\cO_{\bP^2}(d-d_2))$$
 Let $V$ and $W$ be the following subsets:
 $$V=\{(t,\,s,\, w)\,\in\,\cU\,:\, F_t \; \text{is not transversal to}\; G_s\},$$  $$W= \{(t,\, s,\,w)\,\in \cU: F_t \; \text{is not transversal to} \; H_w\}.$$
 Then we will have the following
 $$\cC \,=\,  V \,\bigcup\, W.$$
 
Let us assume that both $V$ and $W$ are proper closed subschemes of $\cU$. Then we consider the map $$(\cU \setminus \cC) \times \bP (H^0(\bP^2,\,\cO_{\bP^2}(d-d_1))) \longrightarrow \bP (H^0(\bP^2,\,\cO_{\bP^2}(d))).$$
 
Since $\cC$ is a proper closed subscheme of $\cU$, the complement $(\cU \setminus \cC)$ is an open dense subset of $\bP (H^0(\bP^2,\,\cO_{\bP^2}(d_1))) \times \bP (H^0(\bP^2,\,\cO_{\bP^2}(d_2))) \times \bP (H^0(\bP^2,\,\cO_{\bP^2}(d-d_2)))$. Hence by \cite[Theorem 5.1]{carlini2008complete}, it follows that the image is an open subset of $\bP(H^0(\bP^2,\,\cO_{\bP^2}(d)))$.\\

Now we will prove $V$ is a proper closed subscheme of $U \times \bP H^0(\bP^2,\,\cO_{\bP^2}(d_2)) \times \bP H^0(\bP^2,\,\cO_{\bP^2}(d-d_2))$. We consider $$V_1:=\{(t,s): F_t \; \text{is not transversal to} \; G_s\} \subseteq U \times \bP (H^0(\bP^2,\,\cO_{\bP^2}(d_2))).$$
It follows that the subscheme $V$ is the pullback of the subscheme $V_1$ under the projection $p_{12}:\, U \times \bP H^0(\bP^2,\,\cO_{\bP^2}(d_2) \times \bP H^0(\bP^2,\,\cO_{\bP^2}(d-d_2)) \longrightarrow \, U \times \bP H^0(\bP^2,\,\cO_{\bP^2}(d_2)$.  Thus it is enough to show that $V_1$ is a proper closed subscheme of $U \times \bP (H^0(\bP^2,\,\cO_{\bP^2}(d_2)))$.

 Let $(N+1)$ be the dimension of $H^{0}(\bP^2,\,\cO_{\bP^2}(d_{2}))$.  Let $\phi_{d_{2}}: \bP^2 \longrightarrow \bP(H^{0}(\bP^2,\,\cO_{\bP^2}(d_{2}))) \cong \bP^N$ be the degree $d_{2}$ Veronese embedding.  Any homogeneous polynomial of degree $d_2$ can be rewritten as  a linear homogeneous polynomial in the Veronese coordinates of $\bP^N$. Thus degree $d_{2}$ forms $G_s$ will be in bijective correspondence with hyperplanes in $\bP^N$ i.e., elements in $(\bP^N)^{\vee}$. Let $H_{G_{s}}$ be the associated hyperplane in $\bP^N$. Let $\phi_{d_{2}}(F_{t})$ be the image of the plane curve $F_{t}$ under the Veronese embedding $\phi_{d_{2}}$. The dual variety $\phi_{d_{2}}(F_t)^{\vee} \subseteq (\bP^N)^{\vee}$ is a hypersurface unless $\phi_{d_{2}}(F_t)$ is linear. This can be seen as follows. All the hyperplanes of $\bP^N$ which are tangent to $\phi_{d_{2}}(F_t)$ form the dual variety $\phi_{d_{2}}(F_t)^{\vee}$ (for details on the dual variety, refer to \cite[\S 1 and \, \S 2]{LAMOTKE198115}). At each point of the curve $\phi_{d_{2}}(F_t)$, the space of hyperplanes which are tangent to that point can be identified as $\bP^{(N-2)}$. When $\phi_{d_{2}}(F_t)$ is linear, one gets a constant family of the planes $\bP^{(N-2)}$. Thus, the dimension of the dual variety is $N-2$. When $\phi_{d_{2}}(F_t)$ is not linear, there is a change of tangent direction when passing from one point to another. Thus one gets a non trivial family of  $\bP^{(N-2)}$ over the curve $\phi_{d_{2}}(F_t)$. Hence,  $\phi_{d_{2}}(F_t)^{\vee}$ is a hypersurface. 
 
 If $\phi_{d_{2}}(F_t)$ is a linear space, then $d_{2}=1$, and $d_{1}=1$. In this case $V_1$ is the diagonal of $\bP(H^{0}(\bP^2,\,\cO_{\bP^2}(1))) \times \bP(H^{0}(\bP^2,\,\cO_{\bP^2}(1)))$. Hence, it follows that $V_1$ is a proper closed subscheme. For the rest, we assume $\phi_{d_{2}}(F_t)$  is not linear. The degree of the hypersurface $\phi_{d_{2}}(F_t)^{\vee}$ is determined by the degree of the curve $\phi_{d_{2}}(F_{t})$ which is $d_{1} \cdot d_{2}$ and the genus of the curve $\binom{d_{1}-1}{2}$. This can be seen as follows. For a generic line $L \subset (\bP^N)^{\vee}$, one obtains a Lefschetz pencil $\phi_{d_{2}}(F_t) \longrightarrow \bP^1$. Thus, the ramification is allowed to have at most one double point on each fiber. Hence, it is a simple covering and the degree of $\phi_{d_{2}}(F_t)^{\vee}$ is precisely the ramification divisor which can be computed by the Riemann-Hurwitz formula. For a smooth form $F_{t} \in U$, let the degree of the hypersurface $\phi_{d_{2}}(F_t)^{\vee}$ be $k$. Then $F_t \; \text{is not transversal to} \; G_s$ if and only if $H_{G_s}$ is an element of the dual variety $\phi_{d_2}(F_t)^{\vee} \subseteq (\bP^N)^{\vee}$. The space $\bP(H^0((\bP^N)^{\vee},\,\cO_{(\bP^N)^{\vee}}(k)))$ be the space of degree $k$ hypersurface in $(\bP^N)^{\vee}$. The family $\{F_{t}:t \in U\}$ will define an injective map $U \longrightarrow \bP(H^0((\bP^N)^{\vee},\,\cO_{(\bP^N)^{\vee}}(k)))$ by $t \mapsto \phi_{d_2}(F_t)^{\vee}$. The injectivity follows from the isomorphism $(\phi_{d_2}(F_t)^{\vee})^{\vee} \cong \phi_{d_{2}}(F_t)$ \cite[2.2. Duality Theorem]{LAMOTKE198115}.  We consider the incidence scheme  

$$I\,=\,\{(X,\, t) \,:\, t \,\in\, X\} \,\subseteq\, \bP(H^0((\bP^N)^{\vee}, \, \cO_{(\bP^N)^{\vee}}(k))) \times (\bP^N)^{\vee}.$$

The scheme $U \times (\bP^N)^{\vee}$ can not be contained inside $I$ since there are transversally intersecting pairs $F_{t}$, and $G_{s}$. Since $U \times (\bP^N)^{\vee}$ is irreducible,  $I \cap (U \times (\bP^N)^{\vee})$ is a proper closed subscheme of $U \times (\bP^N)^{\vee}$. From the construction, it follows that  $I \cap (U \times (\bP^N)^{\vee}) =V_1$. Thus $V_1$ is a proper closed subscheme of $U \times \bP (H^0(\bP^2,\,\cO_{\bP^2}(d_2))$. Hence $V$ is a proper closed subscheme of $\cU$.

Analogously, we define $$W_1: \, =\{(t,w): F_t \; \text{is not transversal to} \; H_w\} \subseteq U \times \bP (H^0(\bP^2,\,\cO_{\bP^2}(d-d_2))).$$
Then it follows that $W$ is the pullback of the subscheme $W_1$ under the projection $p_{13}:\, U \times \bP H^0(\bP^2,\,\cO_{\bP^2}(d_2)) \times \bP H^0(\bP^2,\,\cO_{\bP^2}(d-d_2)) \longrightarrow \, U \times \bP H^0(\bP^2,\,\cO_{\bP^2}(d-d_2))$. Similar arguments will show that $W_1$ is a proper closed subscheme of $U \times \bP (H^0(\bP^2,\,\cO_{\bP^2}(d-d_2))$. Hence, we will have $W$ is a proper closed subscheme of $\cU$.
 
\end{proof}

\begin{proposition}
	\label{linelemma}
	Let $f\,:\,\bP^1 \longrightarrow \bP^1$ be a degree $d$ finite covering. Then $\cO_{\bP^{1}}(d-1)$ is an \ul line bundle.
\end{proposition}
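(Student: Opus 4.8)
The plan is to verify the pushforward characterization directly. Since $f$ has degree $d$, the polarization on the source is $f^*\cO_{\bP^1}(1)\cong\cO_{\bP^1}(d)$, so by part (D3) of \Cref{Eisenbuddefinition} it suffices to show that $f_*\cO_{\bP^1}(d-1)\cong\cO_{\bP^1}^{\oplus d}$. As $f$ is a finite (hence flat) morphism of smooth curves of degree $d$, the sheaf $f_*\cO_{\bP^1}(d-1)$ is locally free of rank $d$ on the target $\bP^1$, and by Grothendieck's splitting theorem it decomposes as $\bigoplus_{i=1}^{d}\cO_{\bP^1}(a_i)$ for integers $a_1,\dots,a_d$. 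The entire problem is then to show that every $a_i=0$.

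To pin down the splitting type I would use the projection formula together with $f^*\cO_{\bP^1}(t)\cong\cO_{\bP^1}(dt)$ to compute, for each $t\in\bZ$,
$$H^0\big(\bP^1,\, f_*\cO_{\bP^1}(d-1)\otimes\cO_{\bP^1}(t)\big)\cong H^0\big(\bP^1,\,\cO_{\bP^1}(d-1+dt)\big).$$
Evaluating at $t=-1$ gives $H^0(\cO_{\bP^1}(-1))=0$, which forces $\sum_i\max(a_i,0)=0$, i.e. $a_i\le 0$ for all $i$. Evaluating at $t=0$ gives $H^0(\cO_{\bP^1}(d-1))$, of dimension $d$. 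Since each summand $\cO_{\bP^1}(a_i)$ with $a_i\le 0$ contributes at most one to this $h^0$, and there are exactly $d$ summands, every summand must contribute exactly one, which happens precisely when $a_i=0$. Hence all $a_i=0$ and $f_*\cO_{\bP^1}(d-1)\cong\cO_{\bP^1}^{\oplus d}$, as required.

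There is essentially no obstacle here: the only point requiring care is the bookkeeping that turns the two $H^0$-computations into the constraint $a_i=0$. In fact the same conclusion follows even more quickly from the cohomological definition. With $H=\cO_{\bP^1}(d)$ the twist $E(-1)=\cO_{\bP^1}(d-1)\otimes\cO_{\bP^1}(-d)=\cO_{\bP^1}(-1)$ has vanishing $H^0$ and $H^1$, so both vanishings in condition (D2) of \Cref{Eisenbuddefinition} (for $k=1$, the cases $i=1$ and $i=0$) hold automatically; applying (D3) again yields the triviality of $f_*\cO_{\bP^1}(d-1)$. I would present the projection-formula computation as the main argument, since it is self-contained and explicitly identifies the pushforward, while noting the one-line cohomological verification as a faster alternative.
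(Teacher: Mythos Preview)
Your proof is correct and follows essentially the same route as the paper: split $f_*\cO_{\bP^1}(d-1)$ as $\bigoplus_i\cO_{\bP^1}(a_i)$ and use the projection formula at twist $-1$ to force $a_i\le 0$. The only cosmetic difference is the endgame—the paper uses Riemann--Roch to get $\sum a_i=0$, whereas you count sections at $t=0$—but both arguments are equivalent and equally short.
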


\begin{proof}
	We claim that $f_*\cO_{\bP^1}(d-1) \,\cong\, \cO_{\bP^1}^{\oplus d}$. The rank $d$ vector bundle $f_*\cO_{\bP^1}(d-1)$ can be written as $\oplus_{i=1}^d \mathcal{L}_i$. By Riemann-Roch, we have $\text{deg}\,(f_*\cO_{\bP^1}(d-1))=0$, and hence $\sum_{i=1}^d\text{deg}\,(\cL_i)=0$.  Let deg\,($\mathcal{L}_i) \,>\,0$ for some $i$. By projection formula we have $f_*(\cO_{\bP^1}(d-1) \,\otimes\, f^*(\cO_{\bP^1}(-1)) \,\cong\, \oplus_{i=1}^d \mathcal{L}_i(-1)$. Thus $H^0(\bP^1, \, f_*(\cO_{\bP^1}(d-1) \otimes f^*(\cO_{\bP^1}(-1))) \ne 0$ since degree$(\mathcal{L}_i(-1)) \ge 0$ for some $i$. On the other hand, since $\text{deg}\,(f)\,=\,d$, we have $f^*(\cO_{\bP^1}(-1) \,=\, \cO_{\bP^1}(-d)$. Thus $H^0(\bP^1, \, f_*(\cO_{\bP^1}(d-1) \otimes f^*(\cO_{\bP^1}(-1)))\,=\,H^0(\bP^1, \,f_*(\cO_{\bP^1}(d-1-d)))\,=\,H^0(\bP^1, \, f_*(\cO_{\bP^1}(-1)))\,=\, H^0(\bP^1, \cO_{\bP^1}(-1))=0$. This is a contradiction. Thus $f_*\cO_{\bP^1}(d-1) \,\cong\, \cO_{\bP^1}^{\oplus d}$.

\end{proof}

The following is the main theorem of this section
\begin{theorem}\label{evenparitythm}
	Let $\pi:X \longrightarrow \bP^2$ be a generic cyclic covering of degree $d$ such that the degree of the branch divisor $d \cdot k$ is even. Then there exists an \ul bundle $E$ on $X$ of rank $d$. 
\end{theorem}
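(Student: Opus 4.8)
The plan is to deduce the statement from the extension machinery of \Cref{generalextensionthm} (in the planar form recorded in \Cref{extensionthm}), combined with the decomposition result \Cref{smoothnesstrans} and the model computation \Cref{linelemma}. By \Cref{extensionthm}, if $D\subset\bP^2$ is a smooth curve of degree $k$ meeting the branch curve $B$ transversally and $C=\pi^{-1}(D)\to D$ is the restricted degree-$d$ cyclic covering, then every rank-$r$ relatively \ul bundle $F$ on $C$ yields a bundle $E$ on $X$ of rank $d\cdot r$ with $\pi_*E\cong\cO_{\bP^2}^{\oplus d^2 r}$. Since I want $E$ of rank exactly $d$, it suffices to produce a relatively \ul \emph{line} bundle on some such curve covering $C\to D$; that $E$ is then an honest vector bundle follows from \Cref{intermediateprop}. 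Thus the whole problem collapses to constructing a line bundle $\cN$ on $C$ with $\pi'_*\cN\cong\cO_D^{\oplus d}$.

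To choose $D$ I would invoke \Cref{smoothnesstrans}. Writing the branch form $F$ of degree $dk$ as $F=F_1G_1+F_2G_2$ with $F_1$ smooth of degree $k$ and transversal to both $F_2$ and $G_2$, I take $D=Z(F_1)$. Here is exactly where the parity hypothesis enters: I want the \emph{balanced} choice $\deg F_2=\deg G_2=dk/2$, which is an integer precisely when $dk$ is even. Restricting to $D$ annihilates the first term, so $F|_D=(F_2G_2)|_D=s_1s_2$ with $s_1=F_2|_D$ and $s_2=G_2|_D$; the transversality and the emptiness of $Z(F_1)\cap Z(F_2)\cap Z(G_2)$ furnished by \Cref{smoothnesstrans} make the branch divisor $B'=B\cap D$ split as a disjoint union $Z(s_1)\sqcup Z(s_2)$ of two \emph{reduced} divisors of equal degree $dk^2/2$. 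Consequently $C\to D$ is the smooth cyclic cover $T^d=s_1s_2$ whose ramification is evenly divided between the two halves, and $\phi=[s_1:s_2]\colon D\to\bP^1$ is a genuine morphism (the two halves being disjoint).

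The decisive step is to convert this balanced splitting into the line bundle $\cN$ by comparison with a degree-$d$ self-covering of $\bP^1$. The model is the genus-$0$ cyclic cover $\rho\colon E=\{\tau^d=x_0x_1\}\to\bP^1$, a degree-$d$ self-covering for which $\cO_E(d-1)$ is relatively \ul by \Cref{linelemma}. The balanced split is precisely what identifies the ramification of $C\to D$ with the $\phi$-preimage of the two totally ramified points $\{x_0=0\}$ and $\{x_1=0\}$ of $\rho$. Transporting $\cO_E(d-1)$ along this comparison—using the flat base change already employed in the proof of \Cref{veroneseargumenttheorem}—produces the candidate $\cN$, and the relative \ul property $\pi'_*\cN\cong\cO_D^{\oplus d}$ is then read off from $\rho_*\cO_E(d-1)\cong\cO_{\bP^1}^{\oplus d}$ together with the projection formula and the vanishing $H^0(\bP^1,\cO_{\bP^1}(-1))=H^1(\bP^1,\cO_{\bP^1}(-1))=0$.

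The part I expect to fight hardest with is this last identification. The cyclic cover $C$ that actually sits inside $X$ is pinned down by the line bundle $M=\cO_D(k)$, since it is the restriction of the given covering, whereas the naive fibre product $D\times_{\bP^1}E$ is a \emph{different} degree-$d$ cover of $D$: it shares the branch locus $B'$ but has a different eigensheaf decomposition of its pushforward. Reconciling these—i.e. exhibiting the relatively \ul line bundle on the correct cover $C$ rather than on its base-change cousin—is the crux, and it is here that evenness is indispensable: only when $dk$ is even does $\cO_D(dk/2)$ exist as a square root of $\cO_D(B')$, which is what permits the ramification of $C$ to be matched two-to-one against the self-covering $\rho$ and hence what makes the transported bundle land on $C$ itself. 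Once $\cN$ is constructed and checked, the remainder is formal: \Cref{generalextensionthm} upgrades it to a rank-$d$ sheaf $E$ on $X$ with free pushforward, and \Cref{intermediateprop} promotes $E$ to a vector bundle, completing the argument.
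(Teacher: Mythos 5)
Your proposal reproduces the paper's proof almost step for step: the balanced decomposition $F=F_1G_1+F_2G_2$ with $\deg F_2=\deg G_2=dk/2$ supplied by \Cref{smoothnesstrans}, the restriction to the smooth curve $D=Z(F_1)$, the pencil map $h=[F_2'|_D:G_2'|_D]\colon D\to\bP^1$, the comparison with the degree-$d$ self-covering $f\colon[x:y]\mapsto[x^d:y^d]$ of $\bP^1$, the pullback of the \ul line bundle $\cO_{\bP^1}(d-1)$ from \Cref{linelemma}, and the final application of \Cref{generalextensionthm}. So the route is the intended one. However, the step you yourself single out as ``the part I expect to fight hardest with'' --- identifying the cover $C=\pi^{-1}(D)$ with the fibre product $C'=D\times_{\bP^1}\bP^1$ --- is left unproved in your write-up, and the justification you sketch (evenness provides a square root $\cO_D(dk/2)$ of $\cO_D(B')$) does not address the actual obstruction. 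The paper disposes of this step by invoking uniqueness of the cyclic covering of $D$ with a given branch divisor, but for $d\ge 3$ a degree-$d$ cyclic cover of a curve is \emph{not} determined by its branch divisor alone: one must also match the line bundle $\bM$ with $\bM^{\otimes d}\cong\cO_D(B')$, equivalently the local monodromy at each branch point.

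Concretely, $f_*\cO_{\bP^1}\cong\cO_{\bP^1}\oplus\cO_{\bP^1}(-1)^{\oplus(d-1)}$, so flat base change gives $f'_*\cO_{C'}\cong\cO_D\oplus\bigl(h^*\cO_{\bP^1}(-1)\bigr)^{\oplus(d-1)}$ with $h^*\cO_{\bP^1}(-1)\cong\cO_{\bP^2}(-dk/2)|_D$, whereas the cover you actually need satisfies $\pi'_*\cO_C\cong\cO_D\oplus\cO_D(-k)\oplus\cdots\oplus\cO_D(-(d-1)k)$. For $d=2$ these agree, which is why the double-cover case goes through; for $d\ge3$ they are non-isomorphic $\cO_D$-modules, so $C'$ and $C$ cannot be isomorphic over $D$. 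The source of the discrepancy is that $f$ has local monodromy $\zeta$ over $0$ and $\zeta^{-1}$ over $\infty$, so $C'$ carries monodromy $\zeta$ along $Z(F_2')\cap D$ and $\zeta^{-1}$ along $Z(G_2')\cap D$, while $C$ carries the same monodromy $\zeta$ along all of $B'$. Your transported line bundle therefore a priori lives on the wrong cover, and closing this gap requires either a different model cover of $\bP^1$ whose pullback along $h$ really is $C$, or a direct construction of a relatively \ul line bundle on $C$ itself. As written, your proposal stalls exactly at the point you flagged, and the square-root remark does not resolve it.
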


\begin{proof}
 Let $\frac{d \cdot k}{2}$ be the natural number $c$. The branch divisor $B$ defined by a polynomial $F$ of degree $ d \cdot k$ can be written as $F\,=\,F_1G_1+F_2G_2$, where $F_1$ is of degree $k$, and $F_2$ is of degree $c$, by \cite{carlini2008complete}[Theorem 5.1]. By Proposition \ref{smoothnesstrans}, those cyclic coverings for which $F_1$ can be chosen to be smooth, and $F_1$ intersects $F_2 \cdot G_2$ transversally, form an open dense subset of the space of all cyclic coverings such that the branch is of degree $ d \cdot k$. So for generic cyclic coverings $\pi$, we can assume $F_1$ is smooth, and $F_1$ intersects $F$ transversally. Let $D$ be the smooth plane curve of degree $k$ defined by $F_1$. We consider the following Cartesian diagram
\begin{center}
	\begin{tikzcd}
		{{C}} && {{X}} \\
		{D} && {\mathbb{P}^2}
		\arrow[hook, from=2-1, to=2-3]
		\arrow["\pi'", from=1-1, to=2-1]
		\arrow["\pi"', from=1-3, to=2-3]
		\arrow[hook, from=1-1, to=1-3]
	\end{tikzcd}
\end{center}
The branch locus of $\pi'$ is $B'$ and is given by $D \cap Z(F_2\cdot G_2)$, which is $dk^2$ distinct points since the intersection is transversal. Let $M$ be the line bundle $\cO_{\bP^2}(k)_{|D}$. Thus $B'$ is in $|M^{\otimes d}|$. The restrictions of $F_2$, and $G_2$ denoted by $F_2'$ and $G_2'$ are sections of $\cO_{\bP^2}(c)_{|D}$. Since $B'$ is a reduced scheme of distinct points, the sections $F_2'$ and $G_2'$ are linearly independent. Also, they can not simultaneously vanish at a point of $D$. We define the following map $h$ given by $F_2'$, and $G_2'$ 
\begin{center}
	\[\begin{tikzcd}
		& \bP^1 \\
		D & {\bP^1}
		\arrow["f", from=1-2, to=2-2]
		\arrow["h={|F_2',G_2'|}"', from=2-1, to=2-2]
	\end{tikzcd}\]
\end{center} 
where $f:\bP^1 \longrightarrow \bP^1$ is defined by $x \mapsto x^d, y \mapsto y^d$. Using Riemann-Hurwitz formula the ramification divisor of $f$ is given by $x^{d-1}y^{d-1}$ i.e., $(d-1)\{0\} \cup (d-1) \{\infty\}$. The algebra structure of $f$ is $f^{\#}:\bC[x, y] \longrightarrow \bC[x,y]$ defined by $x \mapsto x^d$, and $y \mapsto y^d$. Thus, the scheme theoretic image of the branch locus is given by the ideal $I=(f^{\#})^{-1}(x^{d-1}y^{d-1})=(xy)$. Thus the branch locus of $f$ is $\{0, \infty\}$. The map $h$ is of degree $\frac{d \cdot k^2}{2}$. Since the branch $B'$ contains distinct points, the map $h$ is etale at $\{0, \infty\}$. The pullback $h^{-1}\{0, \infty\}$ is the branch $B'$ of $\pi'$. We consider the following Cartesian diagram 
\begin{equation}
	\label{commutitivity}
	\begin{tikzcd}
		C' & \bP^1 \\
		D & {\bP^1}
		\arrow["h'", from=1-1, to=1-2]
		\arrow["{f'}"', from=1-1, to=2-1]
		\arrow["f", from=1-2, to=2-2]
		\arrow["h", from=2-1, to=2-2]
	\end{tikzcd}
\end{equation}
Since the branch divisor of $f':C' \longrightarrow D$ is $B'$, from the uniqueness of the cyclic covering of $D$ for a given branch divisor, we get $C' = C$, and $f'=\pi'$.
 
By Proposition \ref{linelemma} we have $f_*\cO_{\bP^1}(d-1) \cong \cO_{\bP^1}^{\oplus d}$. By the base change theorem \cite[\href{https://stacks.math.columbia.edu/tag/02KG}{Tag 02KG}]{stacks-project} it follows $h'^*(\cO_{\bP^1}(d-1))$ is a relatively \ul line bundle on $C$ for $\pi'$. Hence by Theorem \ref{generalextensionthm}, there exists an \ul bundle $E$ on $X$ of rank $d$. 
\end{proof}

\subsection{Estimation of the rank of Ulrich bundle when $d \cdot k$ is odd} Let $\pi: X \longrightarrow \bP^2$ be a cyclic covering of degree $d$ with branch divisor $B$ a curve defined by a polynomial $F$ of degree $d \cdot k$ such that $d \cdot k$ is odd. We will develop an algorithm to give an estimation of the rank of the Ulrich bundle on $X$. 

When $ d \cdot k$ is odd, we choose the smallest prime $p$ that divides $d \cdot k$. Thus, we have $d \cdot k\,=\,p \cdot r$ for some natural number $r \in \bN$. The homogeneous polynomial $F$ of degree $d \cdot k$ in $\bC[x, y, z]$ defining the branch divisor $B$ can be written as 
\begin{equation} \label{crudecom}
	F \,=\,F_1 \cdot G_1\,+\,F_2 \cdot G_2,
\end{equation}
 such that $F_1$ is of degree $k$ and $F_2$ is of degree $r$. We are interested in estimating the size $m$ of a matrix factorization of the homogeneous polynomial $F_2 \cdot G_2$ in the following form 
\begin{equation} \label{newmfdefn}
	A_1 \cdot A_2 \cdot \cdots \cdot A_{p}\,=\, F_2 \cdot G_2 \cdot \id_{m \times m},
\end{equation}
such that $A_i$ are matrices of size $m$ with entries of each $A_i$ being degree $r$ homogeneous polynomials in $\bC[x, y, z]$. An estimation of size $m$ is used in Theorem \ref{dkodd}.

\begin{remark} \label{keymfremark3}
	Note that the above interpretation of matrix factorization \eqref{newmfdefn} is different  from that in Definition \ref{MF}. In Definition \ref{MF}, the entries of the matrices are required to be linear homogeneous polynomials. With respect to a degree $r$ Veronese embedding of $\bP^2 \hookrightarrow \bP^N$, the same matrices $A_i$ can be written as matrices whose entries are linear homogeneous polynomials in the Veronese coordinates of $\bP^N$. Thus, for our purpose, the above notion \eqref{newmfdefn} suffices (for more details, refer to the proof of Theorem \ref{dkodd}).
\end{remark}

We have described a recursive method to construct a matrix factorization of the homogeneous polynomial $F_2G_2$ of degree $d \cdot k$ appearing in \eqref{crudecom}. The main recursive step is to express $F_2G_2$ as a sum of products of homogeneous polynomials such that the degrees of these polynomials are of the form $q \cdot r$, where $q< p$ is a prime (possibly depending on the term). Choosing the smallest prime $p$ such that $d \cdot k=p \cdot r$ is a way to achieve a matrix factorization of $F_2G_2$ in a possibly minimal number of steps, as described in Proposition \ref{easycase}, Corollary \ref{oddcase}, and in \S \ref{difficultcase}. In a sense, choosing the smallest prime dividing $d \cdot k$ is also used in Theorem \ref{evenparitythm}, when $d \cdot k$ is even. We crucially used that $2$ divides $ d \cdot k$ in the proof of Theorem \ref{evenparitythm}.

\begin{remark} \label{keymfremark2}
	Let $f$ be a homogeneous polynomial of degree $r$. Let $\{A_i\}_{i=1}^{p}$ be a matrix factorization of a degree $p \cdot r$ polynomial $Q$ of size $m$ as in \eqref{newmfdefn}. Then the product  $f \cdot Q$ also admits a matrix factorization of the same size $m$ as follows
	$$ \begin{bmatrix}
		f & 0 & \cdots & 0 \\
		0 & f & \cdots & 0 \\
		\cdots & \cdots & \cdots  & \cdots \\
		0 & 0 & \cdots & f 
	\end{bmatrix}  \cdot A_1 \cdot A_2 \cdots A_{p}\,=\, f \cdot \id_{m \times m} \cdot Q \cdot \id_{m \times m}=(f \cdot Q) \id_{m \times m}
	.$$
\end{remark}

\vspace{20pt}

We consider the following function
\begin{eqnarray}
	\sigma\,:\,\bN_{\ge 2} &\longrightarrow& \bN_{\ge 1}\\
	\sigma(n) &=& \frac{n-1}{2} \,,\,\text{for odd}\, \,\,n \nonumber\\
	&=&\frac{n}{2} \,,\,\,\text{for even}\,\,\, n\ \nonumber.
\end{eqnarray}
We define $\sigma^{\ell}(n)\,=\,(\sigma \circ \sigma \circ \cdots \circ \sigma)(n)$, where $(\sigma \circ \sigma \circ \cdots \circ \sigma)$ is the $\ell$-fold composition of $\sigma$.
For any $n \in \bN_{\ge 2}$, we can associate a finite sequence $\{\sigma(n),\,\sigma^2(n), \cdots, \sigma^k(n)\}$, where $\sigma^k(n)=1$ for some $k \in \bN$ (the natural number $k$ depends on $n$). We can also see that $\sigma^{k-1}(n)$ is either $2$ or $3$.

Let $p$ be a prime number. We call the prime $p$ an {\it admissible prime} if all the natural numbers $\sigma(p), \sigma^2(p), \cdots, \sigma^{k-1}(p)$ are prime, where $\sigma^k(p)=1$.

The only admissible primes are $2, 3, 5, 7, 11, 23, 47$.\\

We consider a homogeneous polynomial $F$ in $\bC[x, y, z]$ of degree $d \cdot k$. We choose the smallest prime $p$ that divides $d \cdot k$. Thus, we have $d \cdot k\,=\,p \cdot r$ for some $r \in \bN$. 
We provide a method to obtain a matrix factorization as in \eqref{newmfdefn} of the polynomial $F$. The size of this matrix factorization depends on the prime $p$. We will denote the size of this matrix factorization by $m_p$. This estimation $m_p$ is used in Corollary \ref{oddcase}.


In the following, we consider the special case when $p$ is an admissible prime.
\begin{prop} \label{easycase}
	Let $p$ be an admissible prime. We provide a method to obtain a matrix factorization of $F$ such that the size $m_p$ satisfies the recursive formula $$m_{p}=p(p-1)^2(m_{\sigma(p)})^4,$$ with initial values $m_{2}=2$ and  $m_{3}=12$.
\end{prop}

\begin{proof}
	By \cite[Theorem 5.1]{carlini2008complete} $F$ can be written as 
	\begin{equation} \label{keydecom}
	F=F_1G_1+F_2G_2,
	\end{equation}
	 where both $F_1$ and $F_2$ are of degree $r$. Thus, $G_1$ and $G_2$ are of degree $(p-1)r$.
	
	 \textbf{\underline{p=2.}} Both $G_1$ and $G_2$ are of degree $r$. Since both $F_1G_1$ and $F_2G_2$ admit a matrix factorization of size $1$, $F$ will admit a matrix factorization of size $2$ by Remark \ref{keymfremark}. The matrix factorization can also be explicitly written as
	$$ \begin{bmatrix}
		F_1 & F_2 \\
		G_2 & -G_1 
	\end{bmatrix}  \cdot  \begin{bmatrix}
		G_1 & F_2 \\
		G_2 & -F_1 
	\end{bmatrix} \, = \, \begin{bmatrix}
		F_1G_1+F_2G_2 & 0 \\
		0 & F_1G_1+F_2G_2 
	\end{bmatrix} \, = \, F \cdot \id_{2 \times 2}
	.$$
	
	\textbf{\underline{p=3.}} Thus, both $G_1$ and $G_2$ are of degree $2r$. By the $p=2$ case, both $G_1$ and $G_2$ admit a matrix factorization of size $2$. By Remark \ref{keymfremark2}, both $F_1G_1$ and $F_2G_2$ admit a matrix factorization of size $2$. Hence, by Remark \ref{keymfremark}, $F$ admits a matrix factorization of size $3 \cdot 2 \cdot 2=12$ (even though $\deg (F)=\deg (F_1G_1)=\deg (F_2G_2)=3r$, the degree counted for this matrix factorization is $3$, since, in terms of the degree $r$ Veronese coordinates, both $F_1G_1$ and $F_2G_2$ have degree 3, as explained in Remark \ref{keymfremark3}. Hence, by Remark \ref{keymfremark}, the size of the matrix factorization of $F$ is $3 \cdot 2 \cdot 2=12$).
	
	\textbf{\underline{General case.}} Both $G_1$ and $G_2$ are of degree $(p-1)r$. Using \cite[Theorem 5.1]{carlini2008complete}, the polynomials $G_i$ can be written as
	\begin{equation} \label{a}
		G_i\,=\,P_i Q_i + P_i' Q_i',
	\end{equation}
	for $i=1, 2$ such that $P_i , \, Q_i, \, P_i',\, Q_i'$ are of degree $\sigma(p) \cdot r$. Since $p$ is admissible, $\sigma(p)$ is a prime number. By iterative property, each of $P_i , \, Q_i, \, P_i',\, Q_i'$ has a matrix factorization of size $m_{\sigma(p)}$. Thus, using similar arguments as in Remark \ref{keymfremark2}, the products $P_i \cdot Q_i $ and $P_i' \cdot Q_i'$ have a matrix factorization of size $m_{\sigma(p)}$. Hence, by Remark \ref{keymfremark}, $G_i$ has  a matrix factorization of size $(p-1) (m_{\sigma(p)})^2$ for $i=1, 2$. Hence, by Remark \ref{keymfremark2}, the products $F_1G_1$ and $F_2G_2$ will have a matrix factorization of size $(p-1) (m_{\sigma(p)})^2$. Thus, again using Remark \ref{keymfremark}, we conclude $F$ has a matrix factorization of size $p \cdot \big((p-1) (m_{\sigma(p)})^2\big)^2=p \cdot (p-1)^2 \cdot (m_{\sigma(p)})^4$.
	
\end{proof}

\begin{notn} \label{notamf}
	Let $F$ be a homogeneous polynomial of degree $d \cdot k$ in $\bC[x, y, z]$. Let $p$ be the smallest prime that divides $d \cdot k$, i.e., $d \cdot k=p \cdot r$ for some natural number $r \in \bN$. By \cite[Theorem 5.1]{carlini2008complete}, the polynomial $F$ can be written as  $$F=F_1G_1+F_2G_2,$$ where $F_1$ is a homogeneous polynomial of degree $k$, and $F_2$ is a homogeneous polynomial of degree $r$.  We describe a method similar to the proof of Proposition \ref{easycase} to obtain a matrix factorization as in \eqref{newmfdefn} of the polynomial $F_2G_2$.  Recall that the entries of each matrix appearing in this matrix factorization are degree $r$ homogeneous polynomials. It is enough to construct a matrix factorization of $G_2$ (which provides a matrix factorization of $F_2G_2$ of the same size by Remark \ref{keymfremark2}).  The size of this particular matrix factorization of $F_2G_2$ depends on the prime $p$. We will denote the size of this matrix factorization by $M_p$. This estimation $M_p$ is used in Theorem \ref{dkodd}.
	
\end{notn}



We use the estimate $m_p$ from Proposition \ref{easycase} in the following.
\begin{cor}
	\label{oddcase}
	Let $p$ be an admissible prime. We obtain a matrix factorization of $F_2 \cdot G_2$ such that the size $M_p$ satisfies the recursive formula $$M_{p}\,=\,(p-1)(m_{\sigma(p)})^2,$$ with initial values $M_{2}=1$ and $M_{3}=2$.
\end{cor}

\begin{proof}
	Let $p=2$. Then the degrees of both $F_2$ and $G_2$ are $r$. Thus, we have $(F_2)_{1 \times 1} \cdot (G_2)_{1 \times 1}\,=\,(F_2G_2) \cdot \id_{1 \times 1}$, giving a matrix factorization of size $1$ of $F_2G_2$ as in \eqref{newmfdefn}. Thus, 	$M_{2}=1$. 
	
	Let $p=3$. The degree of $F_2$ is $r$ and the degree of $G_2$ is $2r$. By Proposition \ref{easycase}, $m_{2}=2$, and thus, $G_2$ has matrix factorization of size $m_{2}=2$. Hence, $F_2G_2$ has a matrix factorization of size $M_{3}=2$ by Remark \ref{keymfremark2}.
	
	 In general, the degree of $F_2$ is $r$ and the degree of $G_2$ is $(p-1) \cdot r$. Thus, we have a decomposition $G_2=PQ+P'Q'$, where each of  $P,  Q, P', Q'$ has degree $\sigma(p) \cdot r$. Hence, each admits a matrix factorization of size $m_{\sigma(p)}$, given by the recursive formula in Proposition \ref{easycase}. Thus, by Remark \ref{keymfremark}, $G_2$ has a matrix factorization of size $(p-1)(m_{\sigma(p)})^2$. Hence, by Remark \ref{keymfremark2}, $F_2G_2$ has a matrix factorization of the same size $M_{p}\,=\,(p-1)(m_{\sigma(p)})^2$.
\end{proof}

\begin{example} \label{em}
	In the proof of Corollary \ref{oddcase}, we have seen $M_2=1, \, M_3=2$. By Corollary \ref{oddcase}, we have $M_5\,=\,4 \cdot (m_2)^2\,=\,4 \cdot 4\,=\,16$ by Proposition \ref{easycase}.   
	
	For $p=7$, we have $M_7=6 \cdot (m_3)^2=6 \cdot 144=864$. We have $M_{11}=10 \cdot (m_5)^2=10 \cdot (5 \cdot 16 \cdot (m_2)^4)=12800$. 
\end{example}

\subsubsection{General method for arbitrary prime $p$}
 \label{difficultcase}

	 Let $F$ be an arbitrary homogeneous polynomial in $\bC[x, y, z]$ of degree $d \cdot k$. Then by \cite[Theorem 5.1]{carlini2008complete} $F$ can be written as 
	 \begin{equation} \label{gendecom}
	 	F=F_1G_1+F_2G_2,
	 \end{equation}
	  where $F_1$ is a homogeneous polynomial of degree $k$, and $F_2$ is a homogeneous polynomial of degree $r$. Let $p$ be the smallest prime that divides $d \cdot k$. We have $d \cdot k=p \cdot r$ for some $r \in \bN$. Since $p$ is an arbitrary prime, $\sigma(p)$ need not be prime. We  provide a recursive method to obtain a matrix factorization of $F_2 \cdot G_2$ in the sense of \eqref{newmfdefn}. By Remark \ref{keymfremark2}, it is sufficient to obtain a matrix factorization of $G_2$. We use the size of this matrix factorization of $F_2G_2$, denoted by $M_p$, in Theorem \ref{dkodd}. The key recursive step is to express $F_2G_2$ as a sum of products of homogeneous polynomials such that the degrees of these polynomials are of the form $q' \cdot r$, where $q'< p$ is a prime (possibly depending on the term). If the sizes of the matrix factorizations of these polynomials of degrees $q' \cdot r$, for primes $q'<p$, are known, then this provides a recursive method to compute the size $M_p$. We describe this method with several examples.
	
	  Let $p=13$. We consider $F_2 \cdot G_2$ appearing in \eqref{gendecom}, where $F_2$ has degree $r$ and $G_2$ has degree $12r$. We have $\sigma(13)=6$, which is not prime. The following steps are illustrated using the diagram below. In all such diagrams, we denote a polynomial expression of the form $PQ+P'Q'$ by $(d_1 \cdot d_2+d_1' \cdot d_2')$, where $\deg(P)=d_1, \deg(Q)=d_2, \deg(P')=d_1', \text{and} \deg(Q')=d_2'$; that is, we record only the degrees rather than the polynomials themselves.
	 \footnotesize
	 \[\begin{tikzcd}
	 	{(r \cdot 12r)} & {r \cdot(6r \cdot 6r+6r \cdot 6r)} \\
	 	& {r \cdot \bigg((3r \cdot 3r+3r \cdot 3r) \cdot (3r \cdot 3r+3r \cdot 3r)+(3r \cdot 3r+3r \cdot 3r) \cdot (3r \cdot 3r+3r \cdot 3r)\bigg)}
	 	\arrow[from=1-1, to=1-2]
	 	\arrow["{\sigma(13)=6\,\, \text{is even}}", from=1-2, to=2-2]
	 \end{tikzcd}\]
	 \normalsize
	 Let $p=19$. Then $\sigma(p)=9$, which is not prime. We consider $F_2 \cdot G_2$, where $F_2$ is degree $r$ and $G_2$ is of degree $18r$. 
	 \footnotesize
	\[\begin{tikzcd}
		{(r \cdot 18r)} \\
		{r \cdot(9r \cdot 9r+9r \cdot 9r)} \\
		\begin{array}{c} r \cdot \bigg(\Big(3r \cdot (3-1)3r+3r \cdot (3-1)3r\Big) \cdot \Big((3r \cdot (3-1)3r+3r \cdot (3-1)3r\Big)+\\\Big((3r \cdot (3-1)3r+3r \cdot (3-1)3r\Big) \cdot \Big((3r \cdot (3-1)3r+3r \cdot (3-1)3r\Big)\bigg) \end{array} \\
		{r \cdot \bigg(\Big(3r \cdot (3r \cdot 3r+3r \cdot 3r)+3r \cdot (3r \cdot 3r+3r \cdot 3r)\Big) \cdot \Big(\cdots\Big)+\Big(\cdots\Big)\cdot\Big(\cdots\Big)\bigg)}
		\arrow[from=1-1, to=2-1]
		\arrow["{\sigma(19)=9\,\, \text{is odd; choose the smallest prime dividing} \,\sigma(19) }", from=2-1, to=3-1]
		\arrow["{\sigma(3)=\frac{3-1}{2}=1}", from=3-1, to=4-1]
	\end{tikzcd}\]
	\normalsize
	We describe the recursive method for an arbitrary prime $p$. Thus, $F_2$ is of degree $r$ and $G_2$ is of degree $(p-1)r$. If $\sigma(p)$ is prime, then by Corollary \ref{oddcase}, we have $M_p=(p-1)m_{\sigma(p)}^2$. If $\sigma(p)$ is not prime, we describe a method of writing $F_2G_2$ as a sum of products of homogeneous polynomials such that the degrees of these polynomials are of the form $q' \cdot r$, where $q'< p$ is a prime (possibly depending on the term). This provides a recursive step to compute the size of matrix factorization $M_p$. 
	
	Let $p'$ be the smallest prime that divides $\sigma(p)$, and we have $\sigma(p)=p' \cdot \alpha$ for some $\alpha \in \bN$.
	\footnotesize
	\[\begin{tikzcd}
		{(r \cdot (p-1)r)} \\
		{r \cdot\big(\sigma(p)r \cdot \sigma(p)r+\sigma(p)r \cdot \sigma(p)r \big)} \\
		\begin{array}{c} r \cdot \bigg(\Big(\alpha r \cdot (p'-1) \alpha r+\alpha r \cdot (p'-1) \alpha r\Big) \cdot \Big( \cdots\Big)+\\\Big(\cdots \Big) \cdot \Big(\cdots\Big)\bigg) \end{array} \\
		{r \cdot \bigg(\Big(\alpha r \cdot \big(\sigma(p')\alpha r \cdot \sigma(p') \alpha r+\sigma(p')\alpha r \cdot \sigma(p') \alpha r \big)+\alpha r \cdot \big(\cdots \big)\Big) \cdot \Big(\cdots\Big)+\Big(\cdots\Big)\cdot\Big(\cdots\Big)\bigg)}
		\arrow[from=1-1, to=2-1]
		\arrow["{\text{Smallest prime} \,\, p' \,\,\text{dividing}\, \sigma(p)}", from=2-1, to=3-1]
		\arrow[from=3-1, to=4-1]
	\end{tikzcd}\]
	 \normalsize

If $\alpha$ is a prime and $\sigma(p')=1$, then we can express $M_p$ in terms of $M_{\alpha}$ for some prime $\alpha < p$. This yields a recursive estimate. If either $\alpha$ is not prime or $\sigma(p') \ne 1$, we choose the smallest prime $p_1$ that divides $\alpha$ and the smallest prime $p_2$ that divides $\sigma(p') \cdot \alpha$, and repeat steps $3$ and $4$ in the above diagram. We provide a few additional examples to illustrate these steps.

Let $p=23$. Then $p$ is an admissible prime, and $M_p$ is estimated by Corollary \ref{oddcase}. 

Let $p=29$. Then $G_2$ is of degree $28r$. We have $\sigma(29)=14$. Then $p'=2$ is the  smallest prime dividing $\sigma(29)$, and we have $\alpha=7$ is also a prime. We have $p'-1=1$. Thus, the process terminates at step $3$, where every polynomial in the decomposition has degree $7 \cdot r$. Hence, $M_{29}$ can be computed in terms of $m_7$, and $m_7$ is given by Proposition \ref{easycase}.

\[\begin{tikzcd}
	{(r \cdot 28 r)} \\
	{r \cdot(14r \cdot 14r+14r \cdot 14r)} \\
	\begin{array}{c} r \cdot \bigg(\Big(7 r \cdot 7 r+7r \cdot7 r\Big) \cdot \Big( \cdots\Big)+\\\Big(\cdots \Big) \cdot \Big(\cdots\Big)\bigg) \end{array}
	\arrow[from=1-1, to=2-1]
	\arrow["{\text{Smallest prime} \,\, p'=2 \,\,\text{dividing}\, \sigma(p)}", from=2-1, to=3-1]
\end{tikzcd}\]

Let $p=31$. Then $G_2$ is of degree $30r$. We have $\sigma(31)=15$. Then $p'=3$ is the smallest prime dividing $\sigma(31)$, and we have $\alpha=5$ is also a prime. We have $\sigma(p')=\sigma(3)=1$. Thus, the process terminates at step $4$, as shown below.
\[\begin{tikzcd}
	{r \cdot \bigg(\Big( 5r \cdot (5 r \cdot  5r+ 5r \cdot  5r)+5r \cdot (\cdots)\Big) \cdot \Big(\cdots\Big)+\Big(\cdots\Big)\cdot\Big(\cdots\Big)\bigg)}
\end{tikzcd}\]

 Let $p=37$. Then $G_2$ is of degree $36r$. We have $\sigma(37)=18$. Then $p'=2$ is the  smallest prime dividing $\sigma(37)$. We have $\alpha=9$, which is not prime. We have $p'-1=1$. Thus, we choose the smallest prime $p_1=3$ that divides $9$. We write $\alpha=p_1 \cdot \beta$, and hence $\beta=3$. Step $5$ is $\alpha  r=(\beta r \cdot  (p_1-1) \beta r+ \beta r \cdot  (p_1-1) \beta r)$. The next step is $(p_1-1) \beta r=(\sigma(p_1)  \beta r \cdot \sigma(p_1)  \beta r+\sigma(p_1)  \beta r \cdot \sigma(p_1)  \beta r)$.
 
 \[\begin{tikzcd}
 	{(r \cdot 36 r)} \\
 	{r \cdot(18r \cdot 18r+18r \cdot 18r)} \\
 	\begin{array}{c} r \cdot \bigg(\Big(9 r \cdot 9 r+9r \cdot9 r\Big) \cdot \Big( \cdots\Big)+\\\Big(\cdots \Big) \cdot \Big(\cdots\Big)\bigg) \end{array} \\
 	\begin{array}{c} r \cdot \bigg(\Big((3r \cdot 6r+3r \cdot 6r) \cdot ()+ () \cdot ()\Big) \cdot \Big( \cdots\Big)+\\\Big(\cdots \Big) \cdot \Big(\cdots\Big)\bigg) \end{array} \\
 	\begin{array}{c} r \cdot \bigg(\Big((3r \cdot (3r \cdot 3r+3r \cdot 3r)+3r \cdot (3r \cdot 3r+3r \cdot 3r)) \cdot ()+ () \cdot ()\Big) \cdot \Big( \cdots\Big)+\\\Big(\cdots \Big) \cdot \Big(\cdots\Big)\bigg) \end{array}
 	\arrow[from=1-1, to=2-1]
 	\arrow["{\text{Smallest prime} \,\, p'=2 \,\,\text{dividing}\, \sigma(37)}", from=2-1, to=3-1]
 	\arrow["{p_1=3 \,\,\text{dividing} \,\, \alpha=9}", from=3-1, to=4-1]
 	\arrow["{\sigma(p_1)=1,\,\,\beta=3}", from=4-1, to=5-1]
 \end{tikzcd}\]

\subsubsection{Main theorem of this subsection}
 Let $p$ be a prime, not necessarily admissible. We recall that $M_{p}$ denotes the size of the matrix factorization, as in \eqref{newmfdefn}, of the homogeneous polynomial $F_2G_2$, where the entries of the matrices are homogeneous polynomials of degree $r$ (see Notation \ref{notamf}).

 When $d \cdot k$ is odd, we obtain the following estimate for the rank of an \ul bundle on generic cyclic coverings $X$ of $\bP^2$. This estimate is a consequence of the methods developed in this article.

\begin{theorem}
	\label{dkodd}
		Let $\pi\,:\, X \,\longrightarrow\, \bP^2$ be a generic cyclic covering of degree $d$ with branch divisor a curve of degree $d \cdot k$ such that $d \cdot k$ is odd. Let $p$ be the smallest prime that divides $d \cdot k$. Then $X$ will support an \ul bundle of rank $d \cdot M_p$.
\end{theorem}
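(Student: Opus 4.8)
The plan is to follow the proof of \Cref{evenparitythm}, but to replace the rank-one relatively \ul line bundle on the auxiliary curve with a rank-$m_p$ relatively \ul bundle extracted from the matrix factorization of \Cref{oddcase}. Write $d\cdot k = p\cdot r$ with $p$ the smallest prime dividing $d\cdot k$, and note that $p\le d$, so $r\ge k$. By \Cref{smoothnesstrans}, for a generic covering the branch form may be written as $F = F_1G_1 + F_2G_2$ with $\deg F_1 = k$ and $\deg F_2 = r$, where $F_1$ defines a smooth plane curve $D$ of degree $k$ transversal to both $F_2$ and $G_2$; in particular $D$ meets the branch $F$ transversally. Forming the Cartesian square of \Cref{extensionthm}, I obtain a degree-$d$ cyclic covering $\pi':C\to D$ of the smooth curve $D$, branched along the reduced $0$-cycle $B' = D\cap Z(F_2G_2)$ consisting of $d\cdot k^2$ distinct points. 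Because $H^1(\bP^2,\cO_{\bP^2}(j)) = 0$ for all $j$, the hypotheses of \Cref{generalextensionthm} are met, so it is enough to produce a relatively \ul bundle of rank $m_p$ on $C$ for $\pi'$: the theorem will then promote it to an \ul bundle of rank $d\cdot m_p$ on $X$.

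The heart of the argument is the construction of this rank-$m_p$ bundle on $C$, and here I would invoke the matrix-factorization machinery of \Cref{three} rather than the elementary $\bP^1$-model of the even case, which is no longer available since $F_2$ and $G_2$ have different degrees. By \Cref{oddcase}, the form $F_2G_2$ admits a matrix factorization $\alpha_1\cdots\alpha_p = (F_2G_2)\cdot\id_{m_p}$ of size $m_p$ with degree-$r$ entries; after the degree-$r$ Veronese embedding $|\cO_{\bP^2}(r)|:\bP^2\hookrightarrow\bP^M$ these become linear. Using the invariance of the product under cyclic permutation of the $\alpha_i$ together with the eigenvalue construction illustrated in \Cref{two}, I would assemble from $\alpha_1,\dots,\alpha_p$ a matrix factorization of the covering equation $t^d-(F_2G_2)$ of size $d\cdot m_p$. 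Applying the cokernel construction and cohomological vanishing of \Cref{keythm} to this factorization, and then restricting to $C$ by flat base change (\cite[\href{https://stacks.math.columbia.edu/tag/02KG}{Tag 02KG}]{stacks-project}), yields a coherent sheaf $G$ on $C$ with $\pi'_*G\cong\cO_D^{\oplus d\cdot m_p}$. Since $C$ is a smooth curve, \Cref{intermediateprop} forces $G$ to be a vector bundle, necessarily of rank $m_p$, and hence a relatively \ul bundle for $\pi'$.

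Feeding this relatively \ul bundle into \Cref{generalextensionthm} then produces the desired \ul bundle $E$ of rank $d\cdot m_p$ on $X$, completing the proof.

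I expect the middle step to be the main obstacle: reconciling the prime $p$, which is the number of factors in the factorization of $F_2G_2$, with the covering degree $d$, so that the assembled factorization of $t^d-(F_2G_2)$ has size exactly $d\cdot m_p$ and its cokernel pushes forward to a trivial bundle of the correct rank on $D$. The depth and base-change checks needed to upgrade this cokernel from a sheaf to an honest vector bundle on $C$ will demand the same care as in \Cref{keythm} and \Cref{intermediateprop}.
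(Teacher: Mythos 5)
Your overall strategy coincides with the paper's: reduce via \Cref{smoothnesstrans} and \Cref{generalextensionthm} (in the form of \Cref{extensionthm}) to producing a rank $m_p$ relatively \ul bundle on the curve covering $\pi':C\to D$, and extract that bundle from the size-$m_p$ matrix factorization of $F_2G_2$ supplied by \Cref{oddcase}. The gap is exactly the one you flag yourself, and it is not a technicality that more care will fix: the factorization $\alpha_1\cdots\alpha_p=(F_2G_2)\cdot\id_{m_p}$ has $p$ factors whose entries have degree $r=dk/p$, whereas a matrix factorization of $t^d-(F_2G_2)$ in the sense of \Cref{existencematrix} needs $d$ factors whose entries have degree $k$ (so that $t$ carries weight $k$ and $t^d$ has degree $dk$). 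Since $p$ is only the smallest prime dividing $dk$ and need not equal, or even divide, $d$ (e.g.\ $d=5$, $k=3$, $p=3$), there is no ``eigenvalue'' assembly of the $\alpha_i$ into a $d$-factor factorization of $t^d-(F_2G_2)$ of size $d\cdot m_p$: the cyclic-root trick of \S\ref{two} pairs the exponent of $t$ with an equal number of matrix factors, and here those numbers are $d$ and $p$.

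What the paper does instead is abandon $t^d$ and work with $t^p$. It takes the linear system $|L|$ on $D$ spanned by the $N_p$ degree-$r$ forms occurring in the decomposition of $F_2G_2$ (base-point free by transversality), obtaining a map $|L|:D\to\bP^{N_p-1}$ under which $F_2G_2$ becomes a degree-$p$ form $F'$, and forms the degree-$p$ cyclic covering $Y=Z(t^p-F')\subseteq\bP^{N_p}$ of $\bP^{N_p-1}$. For this equation the factor count does match: $t^p-F'$ has a matrix factorization of size $p\cdot m_p$, so \Cref{keythm} gives an \ul sheaf $E'$ of rank $m_p$ on $Y$. This sheaf is then transported to $C$ by forming the fiber product of $Y$ with $D$ over $\bP^{N_p-1}$, identifying it with $\pi':C\to D$ via the uniqueness of the cyclic covering with prescribed branch, and invoking base change together with \Cref{intermediateprop} to see that the pullback is a locally free relatively \ul bundle of rank $m_p$. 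To complete your argument you would need to reproduce this detour through the auxiliary degree-$p$ covering $Y$ (or otherwise exhibit a rank-$m_p$ bundle on $C$ with trivial pushforward); the direct size-$d\cdot m_p$ factorization of $t^d-(F_2G_2)$ that your middle step relies on is not available.
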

\begin{proof}
	The branch locus $B$ of $\pi$ is given by a degree $d \cdot k$ polynomial $F$ which can be written as $F=F_{1}G_{1}+F_{2}G_{2}$. Here $F_1$ is of degree $k$, $F_{2}$ is of degree $r=\frac{d \cdot k}{p}$, and $G_{2}$ is of degree $(p-1)r$. By Proposition \ref{smoothnesstrans}, for a generic cyclic covering $X$ we can assume $F_1$ is smooth, and $F_1$ intersects $F_2 \cdot G_2$ transversally. We consider the following Cartesian diagram
	\begin{center}
		\begin{tikzcd}
			{{C}} && {{X}} \\
			{D} && {\mathbb{P}^2}
			\arrow[hook, from=2-1, to=2-3]
			\arrow["\pi'", from=1-1, to=2-1]
			\arrow["\pi"', from=1-3, to=2-3]
			\arrow[hook, from=1-1, to=1-3]
		\end{tikzcd}
	\end{center}
	where $D$ is a smooth curve of degree $k$ defined by $F_{1}$.

	Since $F_1$ intersects $F_2G_2$ transversally, the branch divisor of $\pi'$ is the intersection points $Z(F_2G_2) \cap D$, which are all distinct. Thus, the cyclic covering $C$ is also a smooth curve. We consider the Veronese embedding 
	\begin{equation}
		\begin{tikzcd}
			{\mathbb{P}^2} && {\mathbb{P}^N}
			\arrow["{\vert \mathcal{O}_{\mathbb{P}^2}(r)\vert}", hook, from=1-1, to=1-3]
		\end{tikzcd}
	\end{equation}
	The degree $p \cdot r$ polynomial $F_2G_2$ decomposes as a sum of products of degree $r$ homogeneous polynomials. We can rewrite the polynomial $F_2G_2$ as a degree $p$ homogeneous polynomial by replacing the degree $r$ homogeneous polynomials by linear polynomials using the Veronese coordinates of ${\bP^{N}}$. We denote this degree $p$ homogeneous polynomial in Veronese coordinates of ${\bP^{N}}$ by $F'$.
	
	Composing with the closed immersion $D \longhookrightarrow \bP^2$, we consider the following diagram
	\begin{center}
		\[\begin{tikzcd}
			& {Y=Z(t^p-F')\subseteq \bP^{N+1}} \\
			D & {\bP^{N}}
			\arrow["f", from=1-2, to=2-2]
			\arrow[hook, "i"', from=2-1, to=2-2]
		\end{tikzcd}\]
	\end{center}
	Let $C'$ be the following Cartesian product
	\begin{center}
		\begin{tikzcd}
			{{C'}} && {{Y}} \\
			{D} && {\mathbb{P}^{N}}
			\arrow[hook, "{i}", from=2-1, to=2-3]
			\arrow["f'", from=1-1, to=2-1]
			\arrow["f"', from=1-3, to=2-3]
			\arrow[hook, "j", from=1-1, to=1-3]
		\end{tikzcd}
	\end{center}
	Since the branch locus $Z(F') \subseteq {\mathbb{P}^{N}}$ of the cyclic covering $f$ pulls back to the branch locus $Zero(F_2G_2) \cap D$ under the map $i$, from the uniqueness of cyclic covering of $D$ for a fixed branch locus, we get $C=C'$ and $f'=\pi'$. 
	
	The polynomial $F_2G_2$ has a matrix factorization of size $M_p$ with entries of the matrices that are degree $r$ polynomials. An iterative formula for $M_p$ is provided in Corollary \ref{oddcase} for an admissible prime $p$ (see also \S 5.1.1 for the procedure to construct a matrix factorization for an arbitrary prime $p$). Hence, the degree $p$ polynomial $F'$ will have a linear matrix factorization of size $M_p$ with entries of the matrices that are linear polynomials in Veronese coordinates of  ${\mathbb{P}^{N}}$.
	 Using Remark \ref{keymfremark}, the polynomial $t^{p}-F'$ admits a linear matrix factorization of size $p \cdot M_{p}$. Thus, the cyclic covering $Y$ supports an \ul sheaf $E'$ of rank $M_{p}$ by Proposition \ref{keythm}. 
	The pullback $j^* E'$ will be a relatively \ul sheaf on $C$ by the base change theorem \cite[\href{https://stacks.math.columbia.edu/tag/02KG}{Tag 02KG}]{stacks-project}. 
	Since $C$ and $D$ are smooth, the pullback $j^* E'$ is a vector bundle on $C$ by Proposition \ref{intermediateprop}. Thus by Remark \ref{extensionthm}, we will get an \ul bundle of rank $d \cdot M_{p}$ on $X$.
\end{proof}

As a consequence, we get the following special case.
\begin{cor}
	\label{d=3}
Let $\pi\,:\, X \,\longrightarrow\, \bP^2$ be a generic smooth cyclic covering of degree $3$. Then there exists either a rank $3$ or a rank $6$ \ul vector bundle on $X$. 
\end{cor}
\begin{proof}
	Since the degree of the covering is $d\,=\,3$, the degree of the branch divisor is $d \cdot k$ for some $k \in \bN$. There are two cases depending on whether $k$ is even or odd. If $k$ is even, then $d \cdot k$ is even. Hence, by Theorem \ref{evenparitythm}, there exists a \ul bundle of rank $d=3$.
	
	Now we consider that $k$ is odd.  Hence, the prime $3$ is the smallest that divides $d \cdot k$. From Corollary \ref{oddcase}, we have $M_{3}=2$. Thus, by Theorem \ref{dkodd}, there exists an \ul bundle on $X$ of rank $d \,\cdot\, M_{3}\,=\,6$. 
\end{proof}

\section{ Some applications}\label{six}
In this section, we prove the following Corollary \ref{conic}, Corollary \ref{fermat}, Corollary \ref{newone}, and Example \ref{ellipticcase} as applications of the methods developed in \S 3, \S 4, and \S 5. The first two corollaries provide simpler proofs of earlier results obtained in \cite{parameswaran2021ulrich} using different methods. Corollary \ref{newone} provides new examples of cyclic coverings of $\bP^2$ of degree $d$ that admit Ulrich line bundles.
	\begin{cor}\label{conic}\cite[Corollary 1.2]{parameswaran2021ulrich}
	Let $\pi : X \longrightarrow \mathbb{P}^{2}$ be a degree $2$ covering such that the base curve is a smooth conic, then $X$ will admit an {U}lrich line bundle.
\end{cor}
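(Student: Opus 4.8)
The plan is to exploit the fact that, for $d=2$ with a conic branch locus, the covering $X$ is simply a smooth quadric surface in $\bP^3$, and to produce the desired \ul line bundle as the cokernel of a \emph{size two} matrix factorization via \Cref{keythm}. The whole point is that improving on the generic bound of \Cref{existencematrix} from size $4$ to size $2$ is exactly what turns the Ulrich sheaf into a line bundle.

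First I would record the numerics. Since $\deg(\pi)=d=2$ and the branch divisor $B$ is a conic, we have $d\cdot k=2$, hence $k=1$; thus the Veronese embedding $|\cO_{\bP^2}(k)|$ used in \Cref{veroneseargumenttheorem} is the identity, and the construction of \S\ref{cycliccovering} realizes $X$ directly as the hypersurface $\tilde Z=\{t^2-f=0\}\subseteq\bP^3$, where $f\in H^0(\bP^2,\cO_{\bP^2}(2))$ defines $B$ and $\pi$ is the linear projection from the point $Q=(0,0,0,1)$. Since $f(0,0,0)=0$, the point $Q$ lies off $X$, and since $B$ is smooth, $X$ is a smooth irreducible quadric surface with $\pi\colon X\to\bP^2$ finite of degree $2$ (see \Cref{singcyclic}(3)). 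Hence the hypotheses of \Cref{keythm} hold with $n=2$ and $\mathbb{P}^{n+1}=\bP^3$.

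Next I would exhibit a size $m=2$ matrix factorization of $t^2-f$. After a linear change of coordinates on $\bP^2$ (induced by an automorphism fixing $Q$, hence harmless for the Ulrich property) we may take $f=x_1^2-x_0x_2$, so that $t^2-f=t^2-x_1^2+x_0x_2$. Then, setting
\[
A=\begin{pmatrix} t-x_1 & x_0\\ -x_2 & t+x_1\end{pmatrix},\qquad
B=\begin{pmatrix} t+x_1 & -x_0\\ x_2 & t-x_1\end{pmatrix},
\]
a direct check gives $AB=BA=(t^2-f)\cdot\id_{2\times 2}$, so $(\alpha_1,\alpha_2)=(A,B)$ is a linear matrix factorization of size $2$. (This is the four-variable instance of the classical Clifford-algebra factorization of a nondegenerate quadratic form, whose two cokernels are the spinor line bundles.)

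Finally I would apply \Cref{keythm} to $G=\coker(A)$, giving $\pi_*G\cong\cO_{\bP^2}^{\oplus m}=\cO_{\bP^2}^{\oplus 2}$, so $G$ is an \ul sheaf on $X$. From the defining sequence $0\to\cO_{\bP^3}(-1)^{\oplus 2}\xrightarrow{A}\cO_{\bP^3}^{\oplus 2}\to G\to 0$ the sheaf $G$ has generic rank $2-\operatorname{rank}_X(A)=1$ on $X$: indeed $A$ is nowhere the zero matrix on $\bP^3$, so it drops to rank exactly $1$ along the smooth quadric $\{\det A=t^2-f=0\}$. Because $X$ is smooth, the cokernel is locally free (as in the remark following \Cref{veroneseargumenttheorem}), so $G$ is a line bundle, i.e. the required \ul line bundle. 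The main obstacle is precisely this size reduction: writing the conic as a sum of two products of linear forms gives $s=2$ in \Cref{existencematrix}, whose generic factorization has size $d^s=4$ and whose cokernel would be an Ulrich bundle of rank $4/2=2$, not a line bundle; getting size $2$ genuinely uses the special geometry of the smooth quadric. Once that factorization is available, the rank count $m/d=1$ together with the smoothness of $X$ makes the passage from Ulrich sheaf to \ul line bundle immediate.
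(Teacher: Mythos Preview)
Your proof is correct and follows essentially the same approach as the paper: normalize the conic, realize $X$ as a smooth quadric in $\bP^3$, exhibit an explicit size-$2$ linear matrix factorization of $t^2-f$, and invoke \Cref{keythm} to conclude that the cokernel is an \ul line bundle. The paper uses the normal form $y^2+xz$ and the factorization $(t\cdot\id+A)(t\cdot\id-A)$ with $A=\begin{pmatrix} y & x\\ z & -y\end{pmatrix}$, which is a cosmetic variant of your matrices; your additional commentary on the rank count and on why the generic bound $d^s=4$ is insufficient is helpful context but not a departure in method.
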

\begin{proof}

Any non-degenerate conic is projectively equivalent to the smooth conic $C_{0} = \text{Zero}(y^{2} + x \cdot z)$ in $\mathbb{P}^{2}$. Thus, from the uniqueness of cyclic covering for a given branch, $X$ is defined by the polynomial $g = t^{2} - y^{2} - x \cdot z$ in $\mathbb{P}^{3}$. By Proposition \ref{keythm} to produce an \ul line bundle on $X$, it is enough to show that $g$ has a matrix factorization of size $2$.
	\begin{center}
	Let $A =  \begin{bmatrix}
		y & x \\
		z & -y 
	\end{bmatrix}  $, then $A^{2} =  \begin{bmatrix}
		y^{2}+x \cdot z & 0 \\
		0 & y^{2}+x \cdot z 
	\end{bmatrix}  = (y^{2}+x \cdot  z) \cdot \text{id}_{{2 \times2}}$.
\end{center} Thus we will have $$(t \cdot \text{id}_{{2 \times2}} + A)(t \cdot \text{id}_{{2 \times2}} - A) = t^{2}\cdot \text{id}_{{2 \times2}} - A^{2} = g \cdot \text{id}_{{2 \times2}}$$
	Let $\alpha_1$ be the matrix $(t \cdot \text{id}_{{2 \times2}} + A)$, and $\alpha_2$ be the matrix $(t \cdot \text{id}_{{2 \times2}} - A)$. Then $\alpha_1 \cdot \alpha_2=g \cdot \id$ is the desired matrix factorization. Then $G_1=\coker(\alpha_1)$, and $G_2=\coker(\alpha_2)$ are two \ul line bundles on $X$ given by Proposition \ref{keythm}.  
\end{proof}
\begin{cor}\cite[Theorem 1.6]{parameswaran2021ulrich}\label{fermat}
	Let $\pi \,:\, X \,\longrightarrow\, \mathbb{P}^{2}$ be a degree $2$ covering such that the branched curve is a Fermat curve of degree $2s$, then $X$ will admit an {U}lrich line bundle.
\end{cor}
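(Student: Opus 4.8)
The plan is to realize the Fermat double cover $X$ as a flat pullback of the smooth conic double cover already handled in \Cref{conic}, and then to transport the \ul line bundle along this pullback by base change. The starting point is the identity $x^{2s}+y^{2s}+z^{2s} = (x^{s})^2 + (y^{s})^2 + (z^{s})^2$, which exhibits the Fermat branch curve $B = \text{Zero}(x^{2s}+y^{2s}+z^{2s})$ as the preimage, under the $s$-th power map
\[
\phi : \bP^2 \longrightarrow \bP^2, \qquad [x:y:z] \mapsto [x^s : y^s : z^s],
\]
of the smooth conic $C_0 = \text{Zero}(u^2+v^2+w^2)$ in the target plane with coordinates $[u:v:w]$. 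Here $\phi$ is a finite morphism of degree $s^2$ (the forms $x^s,y^s,z^s$ have no common zero on $\bP^2$), it is flat as a finite surjection of smooth surfaces, and it satisfies $\phi^*\cO_{\bP^2}(1)\isom\cO_{\bP^2}(s)$ together with $\phi^*(u^2+v^2+w^2)=x^{2s}+y^{2s}+z^{2s}$.

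Let $\pi_0 : X_0 \to \bP^2$ be the double cover of the target branched along $C_0$, so that $X_0 = \text{Zero}(t^2 - u^2 - v^2 - w^2) \subseteq \bP^3$ is the smooth quadric of \Cref{conic}. I would first check that the fibre product $X_0 \times_{\pi_0,\bP^2,\phi}\bP^2$ is exactly the Fermat cover $X$, yielding the Cartesian square
\begin{center}
\begin{tikzcd}
	X \arrow[r, "\psi"] \arrow[d, "\pi"'] & X_0 \arrow[d, "\pi_0"] \\
	\bP^2 \arrow[r, "\phi"] & \bP^2
\end{tikzcd}
\end{center}
Locally $\pi_0$ is $A \to A[t]/(t^2 - a)$ with $a$ a local equation of $C_0$, so base change along $\phi^{\#}:A\to B$ gives $B \to B[t]/(t^2 - \phi^{\#}(a))$, whose branch divisor is $\phi^{-1}(C_0) = B$ and in which the fibre coordinate $t$ acquires weight $s$. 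By the uniqueness of the cyclic covering attached to a fixed branch divisor (\Cref{singcyclic}(2)), this fibre product is $X = \text{Zero}(t^2 - x^{2s}-y^{2s}-z^{2s})$; since $B$ is smooth, $X$ is smooth by \Cref{singcyclic}(3).

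Granting the square, the conclusion is formal. By \Cref{conic} there is an \ul line bundle $G$ on $X_0$, i.e.\ $\pi_{0*}G \isom \cO_{\bP^2}^{\oplus 2}$ (concretely $G=\coker(\alpha_i)$ for a size-$2$ matrix factorization of $t^2-u^2-v^2-w^2$ built exactly as in \Cref{conic} and fed into \Cref{keythm}). Its pullback $\psi^*G$ is again a line bundle, and flat base change \cite[\href{https://stacks.math.columbia.edu/tag/02KG}{Tag 02KG}]{stacks-project} for the Cartesian square gives
\[
\pi_*\psi^*G \isom \phi^*\pi_{0*}G \isom \phi^*\cO_{\bP^2}^{\oplus 2} \isom \cO_{\bP^2}^{\oplus 2},
\]
so that $\psi^*G$ is an \ul line bundle on $X$ with respect to $\pi^*\cO_{\bP^2}(1)$.

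The step I expect to be the main obstacle is the identification of the fibre product with $X$: one must ensure that base change along the ramified map $\phi$ produces the reduced cyclic cover, so that the uniqueness of \Cref{singcyclic}(2) applies, and that $t$ is correctly weighted to land in $\bP(1,1,1,s)$ rather than giving a reducible or non-reduced scheme. Once the square is correctly set up, the flatness of $\phi$ and the triviality of $\phi^*\cO_{\bP^2}^{\oplus 2}$ are immediate, and the \ul property transports with no further computation.
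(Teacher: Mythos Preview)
Your proof is correct and takes a genuinely different, more economical route than the paper. The paper follows the template of \Cref{veroneseargumenttheorem}: it passes through the full Veronese embedding $|\cO_{\bP^2}(s)|:\bP^2\hookrightarrow\bP^N$, writes down an explicit size-$2$ matrix factorization of $t^2-z_1^2+z_2^2+z_3^2$ in the ambient $\bP^{N+1}$, and then restricts the resulting \ul line bundle back to $X$. You instead observe that the $s$-th power self-map $\phi:\bP^2\to\bP^2$ already pulls the smooth conic back to the Fermat curve, so the Fermat double cover is the flat base change of the conic double cover along $\phi$; then you simply transport the \ul line bundle of \Cref{conic} by flat base change. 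In effect your $\phi$ is the composition of the paper's Veronese map with the linear projection onto the three monomial coordinates $z_1=x^s,\,z_2=y^s,\,z_3=z^s$, so the two arguments rest on the same structural fact, but yours avoids the detour through $\bP^N$ and reuses \Cref{conic} verbatim rather than redoing the matrix factorization. The paper's version has the advantage of illustrating the general Veronese machinery of \Cref{veroneseargumenttheorem} in a concrete case; yours is shorter and makes the dependence on \Cref{conic} transparent. Your concern about the fibre-product identification is unfounded: since $\pi_0$ is affine with $\pi_{0*}\cO_{X_0}\cong\cO_{\bP^2}\oplus\cO_{\bP^2}(-1)$ and algebra structure determined by the section $u^2+v^2+w^2\in H^0(\cO_{\bP^2}(2))$, pulling back along $\phi$ gives the $\cO_{\bP^2}$-algebra $\cO_{\bP^2}\oplus\cO_{\bP^2}(-s)$ with multiplication given by $x^{2s}+y^{2s}+z^{2s}$, which is exactly the cyclic double cover $X$ (reduced and irreducible, since its branch is the smooth Fermat curve).
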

\begin{proof}
	A Fermat curve of degree $2s$ is projectively equivalent to $\text{Zero}(x^{2s} - y^{2s} -z^{2s})$ in $\mathbb{P}^{2}$. Let $f: \mathbb{P}^{2} \longhookrightarrow \mathbb{P}^{N}$ be the Veronese embedding with respect to the linear system $|\mathcal{O}_{\mathbb{P}^{2}}(s)|$, where $N=\text{dim}\,H^0(\bP^2,\, \mathcal{O}_{\mathbb{P}^{2}}(s))-1$. Let  $z_{1}, z_{2}, \cdots, z_{N+1}$ be the Veronese coordinates of $\mathbb{P}^{N}$, where $z_1=x^s$, $z_2=y^s$, and $z_3=z^s$. We consider the degree $2$ cyclic covering $\tilde{\pi}: \tilde{X} = \text{Zero}(t^{2}-z_1^{2}+z_2^{2}+z_{3}^{2}) \subseteq \bP^{N+1}\longrightarrow \mathbb{P}^{N}$ over the branch locus $Zero(z_1^2-z_2^2-z_3^2) \subseteq \bP^N$. 
	\begin{center}
		Let $A =  \begin{bmatrix}
			z_1 & (z_2+iz_3) \\
			-(z_2-iz_3) & -z_1 
		\end{bmatrix}  $, then $A^{2} =  \begin{bmatrix}
			z_1^{2}-(z_2+iz_3) \cdot (z_2-iz_3) & 0 \\
			0 & z_1^{2}-(z_2+iz_3) \cdot (z_2-iz_3) 
		\end{bmatrix}  = (z_1^{2}-z_2^2-z_3^3) \cdot \text{id}_{{2 \times2}}$.
	\end{center} 
Then $\alpha_1=t \cdot \id_{2 \times 2}+A$, and $\alpha_2=t \cdot \id_{2 \times 2}-A$ gives the matrix factorization $\alpha_1 \cdot \alpha_2=(t^{2}-z_1^{2}+z_2^{2}+z_{3}^{2}) \cdot \id$ of size $2$. Let $G_i$ be the $\coker(\alpha_i)$ for $i=1, 2$ (see Definition \eqref{projdim}). Then by Theorem \ref{veroneseargumenttheorem} $(G_i)_{|X}$ is an \ul line bundle on $X$.    
\end{proof}

We generalize Corollary \ref{fermat} for a Fermat curve of degree $dk$, thereby producing new examples of cyclic coverings of $\bP^2$ of degree $d$ that admit Ulrich line bundles.

\begin{cor} \label{newone}
	Let $\pi\,:\,X \longrightarrow \bP^2$ be a degree $d$ cyclic covering branched over a Fermat curve of degree $dk$. Then $X$ supports an \ul line bundle.
\end{cor}

\begin{proof}
	Since the branch divisor $B$ is a Fermat curve in $\bP^2$ of degree $dk$, it is given by $B\,=\,Z(x^{dk}+y^{dk}-z^{dk})$. We consider the Veronese embedding $\bP^2 \longhookrightarrow \bP^M$ defined by the linear system $|\cO_{\bP^2}(k)|$, where $M\,=\,\dim H^0(\bP^2, \cO_{\bP^2}(k))-1$. Let $z_1, z_2, \cdots, z_{M+1}$ be the Veronose coordinates of $\bP^M$. Then $z_1, z_2, \cdots, z_{M+1}$ are given by the degree $k$ monomials in $\bC[x, y, z]$. In particular, we assume that $z_1=x^k, z_2=y^k$, and $z_3=z^k$. We consider the divisor $B'\,=\,Z(z_1^d+z_2^d-z_3^d) \subseteq \bP^M$ with the property $B' \cap \bP^2\,=\,B$. We define the hypersurface $\widetilde{Z}\,=\,Z(t^d-z_1^d-z_2^d+z_3^d) \subseteq \bP^{M+1}$. Then projection from the point $(0,0,\cdots, 1) \in \bP^{M+1}$ defines a cyclic covering $\widetilde{\pi}\,:\,\widetilde{Z} \longrightarrow \bP^M$ of degree $d$ branched over $B'$. Similar to the proof of Theorem \ref{veroneseargumenttheorem}, we obtain the following Cartesian diagram
	\begin{equation}
		\begin{tikzcd}
			{\text{X}} && {\widetilde{Z}} \\
			{\mathbb{P}^2} && {\mathbb{P}^M}
			\arrow["{|\mathcal{O}_{\mathbb{P}^2}(k)|}", hook, from=2-1, to=2-3]
			\arrow["\pi", from=1-1, to=2-1]
			\arrow["{\widetilde{\pi}}"', from=1-3, to=2-3]
			\arrow[hook, from=1-1, to=1-3]
		\end{tikzcd}
	\end{equation}
	
	Let $\omega$ be a primitive $d^{\text{th}}$ root of unity. Then we can write 
	$$t^d-z_1^d\,=\,(t-z_1)(t-\omega z_1)(t-\omega^2 z_1) \cdots \cdots (t-\omega^{d-1} z_1),$$
	and
$$z_3^d-z_2^d\,=\,(z_3-z_2)(z_3-\omega z_2)(z_3-\omega^2 z_2) \cdots \cdots (z_3-\omega^{d-1} z_2).$$
Thus, both $(t^d-z_1^d)$ and $(z_3^d-z_2^d)$ admit  matrix factorizations of size $1$.  Hence, by Remark \ref{keymfremark}, $(t^d-z_1^d-z_2^d+z_3^d)$ admits a matrix factorization of size $d \cdot 1 \cdot 1=d$,
$$Q_1 \cdot Q_2 \cdots Q_d\,=\, (t^d-z_1^d-z_2^d+z_3^d) \cdot \id_{d \times d},$$
where the $Q_i$ are $d \times d$ matrices whose entries are linear homogeneous polynomials in $\bC[t, z_1, z_2, z_3]$. Hence, by the proof of Theorem \ref{veroneseargumenttheorem}, there exist Ulrich sheaves $E_i\,=\,\coker(Q_i)$ (see Definition \ref{projdim}) of rank $1$ on $\widetilde{Z}$, and the restrictions $E_i \vert_{X}$ are Ulrich line bundles on $X$ for all $i=1, 2, \cdots, d$. 
\end{proof}

The following provides an example of a rank $3$ Ulrich bundle on a smooth cubic surface in $\bP^3$, given as a degree $3$ cyclic covering of $\bP^2$ branched over an elliptic curve.

\begin{example}
	\label{ellipticcase}
	Let $\pi\,:\,X \,\longrightarrow\, \bP^2$ be a degree $3$ cyclic covering such that the branch locus is an elliptic curve. Thus, in this case $d=3$ and $k=1$. By Corollary \ref{d=3}, there exists a rank $6$ \ul bundle on a generic degree $3$ cyclic covering $X$. In this particular case of $k=1$, we will construct a rank $3$ \ul bundle as follows. We are aware that there also exist rank $1$, and rank $2$ \ul bundles on $X$ by \cite[Example 3.5]{casanellas2012stable}. The Legendre form of the branch locus $F$ is given by $F = y^{2}z + x(x-z)(x-\lambda z)$. Hence the covering $X$ is the smooth cubic hypersurface $Zero(t^3-F) \subseteq \mathbb{P}^{3}$. 
	
	We will construct an explicit matrix factorization of $t^3-F$ of size $9 $.  Taking $\alpha_1 =  \begin{bmatrix}
		-y & 0 & x \\
		\frac{(x-z)}{2} & -\frac{y}{2} & 0 \\
		0 & x-\lambda z & z 
	\end{bmatrix}  $, $\alpha_2 =  \begin{bmatrix}
		-y & 0 & 2x \\
		(x-z) & -2z & 0 \\
		0 & x-\lambda z & y 
	\end{bmatrix}  $, $\alpha_3 =  \begin{bmatrix}
		z & 0 & x \\
		(x-z) & y & 0 \\
		0 & x-\lambda z & y 
	\end{bmatrix}  $, we can check that $\alpha_1 \cdot \alpha_2 \cdot \alpha_3 = F \cdot \id_{3 \times 3}$. By Remark \ref{keymfremark}, $(t^{3} - F)$ will have a matrix factorization of size $3 \cdot 1 \cdot 3\,=\,9$. Thus $X$ will support an {U}lrich bundle of rank $3$ by Proposition \ref{keythm}.

We can also see this using the method of \S \ref{four}. We take a projective line $\bP^1 \subseteq \bP^2$ intersecting the branch elliptic curve at three distinct points. We consider the following fiber product diagram
\begin{center}
	\begin{tikzcd}
		{{C}} && {{X}} \\
		{\bP^1} && {\mathbb{P}^{2}}
		\arrow["i", from=2-1, to=2-3]
		\arrow["\pi'", from=1-1, to=2-1]
		\arrow["\pi"', from=1-3, to=2-3]
		\arrow["i'", from=1-1, to=1-3]
	\end{tikzcd}
\end{center}

By the Riemann-Hurwitz formula, we have $$2g(C)-2=3(2g(\bP^1)-2)+(3-1)\cdot 3=-6+6=0.$$ Hence, we get $g(C)=1$. Thus $C$ is a smooth curve of genus $1$. Since the dimension of $\text{Pic}^3(C)\,=\,1$, almost all the line bundles of degree $3$ can not be written as $(\pi')^* \cO_{\bP^1}(1)$. Let $\cL$ be any such line bundle of degree $3$. By Riemann-Roch we have $\text{dim}\, H^0(X,\, \cL)=3$. We have $\pi_*\cL=\cM_1 \oplus \cM_2 \oplus \cM_3$. We can prove that $\sum_{i=1}^{3}\text{deg}\,(\cM_i)\,=\,0$ as follows. Let $\text{deg}\,(\cM_i)\, >\,0$ for some $i$. Since $\cL \otimes (\pi')^* \cO_{\bP^1}(-1) \ncong \cO_{C}$, we will have $H^0(C,\, \cL \otimes (\pi')^* \cO_{\bP^1}(-1))\,=\,0$. On the other hand, using the projection formula, we have $H^0(C,\, \cL \otimes (\pi')^* \cO_{\bP^1}(-1)) \,\cong\, H^0(\bP^1,\, (\cM_1 \oplus \cM_2 \oplus \cM_3) \otimes \cO_{\bP^1}(-1)) \,\ne\, 0$ since $\text{deg}\,(\cM_i(-1)) \,\ge\, 0$ for some $i$, which is a contradiction. Thus, we get $\pi_* \cL \cong \cO_{\bP^1}^{\oplus 3}$. Hence, $X$ admits an \ul bundle of rank $3$ by the Remark \ref{extensionthm}.
 
\end{example}

We therefore pose the following question.
\begin{quest} \label{parityquest}
	In Theorem \ref{evenparitythm} we have proved the existence of a rank $d$ \ul bundle on a generic degree $d$ cyclic covering of $\bP^2$ when the degree of the branch $d \cdot k$ is even. Is it true that there exists an  \ul bundle of rank $\le d$  on every smooth cyclic covering $X \longrightarrow \bP^2$ of degree $d$, irrespective of the parity of $d \cdot k$?  
\end{quest}

\section*{Acknowledgement}  The second-named author of this paper was supported by a Postdoctoral Fellowship of TIFR, Mumbai, while the first-named author was a faculty there during the initial preparation phase of this work. The second-named author acknowledges support from a Visiting Scientist position at ISI Bangalore. The second-named author is currently supported by an Institute Postdoctoral Fellowship at IISER Pune. The authors thank the referee for a meticulous reading of the article and for many valuable suggestions that have greatly improved it.\

\bibliographystyle{alphaurl}
\bibliography{Ulrich_bundles_cyclic_covering_arxiv.bib}

\end{document}